\newtheorem{proposition}{Proposition}[section]
\newtheorem{theorem}[proposition]{Theorem}
\newtheorem{corollary}[proposition]{Corollary}
\newtheorem{lemma}[proposition]{Lemma}
\theoremstyle{definition}
\theoremstyle{remark}
\newtheorem{remark}[proposition]{Remark}
\newcommand{\nc}{\newcommand}
\nc{\I}{{\mathbf 1}}
\nc{\bN}{{\mathbf N}}
\nc{\bM}{{\mathbf M}}
\nc{\cB}{{\mathcal B}}
\nc{\cL}{{\mathcal L}}
\nc{\R}{{\mathbb R}}
\nc{\N}{{\mathbb N}}
\nc{\Z}{{\mathbb Z}}
\nc{\md}{\mathrm{d}}
\DeclareMathOperator{\card}{card}
\nc{\BP}{\mathbb{P}}
\nc{\BE}{\mathbb{E}}
\nc{\BQ}{\mathbb{Q}}
\DeclareMathOperator{\BV}{{\mathbb Var}}
\newcommand{\COV}[1]{\mathsf{Cov}\left( #1 \right)}
\newcommand{\1}{\textbf{1}}
\numberwithin{equation}{section}
\begin{document} 

\renewcommand{\thefootnote}{\fnsymbol{footnote}}
\author{M.A. Klatt\footnotemark[1]\,, G. Last\footnotemark[2]\, and
  D. Yogeshwaran\footnotemark[3]}
\footnotetext[1]{michael.klatt@kit.edu, Karlsruhe Institute of
  Technology, Institute for Stochastics, 76131 Karlsruhe, Germany. }
\footnotetext[2]{guenter.last@kit.edu, Karlsruhe Institute of
  Technology, Institute for Stochastics, 76131 Karlsruhe, Germany. }
\footnotetext[3]{d.yogesh@isibang.ac.in, Theoretical Statistics and
  Mathematics Unit, Indian Statistical Institute, Bangalore, India. }

\title{Hyperuniform and rigid stable matchings} 
\date{\today}
\maketitle

\begin{abstract} 
\noindent 
We study a stable partial matching $\tau$ of {the
$d$-dimensional} lattice with a stationary determinantal point process
$\Psi$ on $\R^d$ with intensity $\alpha>1$. For instance, $\Psi$ might
be a Poisson process. The matched points from $\Psi$ form a stationary
and ergodic (under lattice shifts) point process $\Psi^\tau$ with intensity $1$ that very
much resembles $\Psi$ for $\alpha$ close to $1$. On the other hand
$\Psi^\tau$ is hyperuniform and number rigid, quite in contrast to a
Poisson process. 
We deduce these properties by proving more general
results for a stationary point process $\Psi$, whose so-called
matching flower (a stopping set determining the matching partner of a lattice point)
has a certain subexponential tail behaviour.
For hyperuniformity, we also additionally need to
assume some mixing condition on $\Psi$. Further, if $\Psi$ is a
Poisson process then $\Psi^\tau$ has an exponentially decreasing
truncated pair correlation function.
\end{abstract}

\noindent
{\bf Keywords:} stable matching, Poisson process, determinantal process,
hyperuniformity, number rigidity, pair correlation function

\vspace{0.1cm}
\noindent
{\bf AMS MSC 2010:} 60G55, 60G57, 60D05

\section{Introduction}\label{intro}

With respect to the degree of order and disorder, the completely
independent Poisson point process (ideal gas) is the exact opposite of a
(standard) lattice with a perfect short- and long-range order. Here we
match these two extremes in the stable sense of Gale and
Shapley~\cite{GaleShapley1962} and Holroyd, Pemantle, Peres and
Schramm~\cite{HPPS09}, where lattice and Poisson points prefer to be
close to each other. Assuming the intensity of the Poisson process to
be larger than one, the matched Poisson points form a point process (a
stable thinning) that inherits properties from both the lattice and the
Poisson process. In fact, if the Poisson  intensity approaches unity,
the thinning becomes almost indistinguishable from a Poisson process in
any finite observation window, while its large-scale density fluctuations
remain anomalously suppressed similar to the lattice;
see the supplementary
video\footnote[4]{\url{https://arxiv.org/src/1810.00265v1/anc/KLY-2018-Supplementary-Video.mp4}}.
In this article, we study such properties of stable partial matchings
between the lattice and a stationary point process.
We shall now give more details.

\begin{figure}[t]
  \centering
  \includegraphics[width=\textwidth]{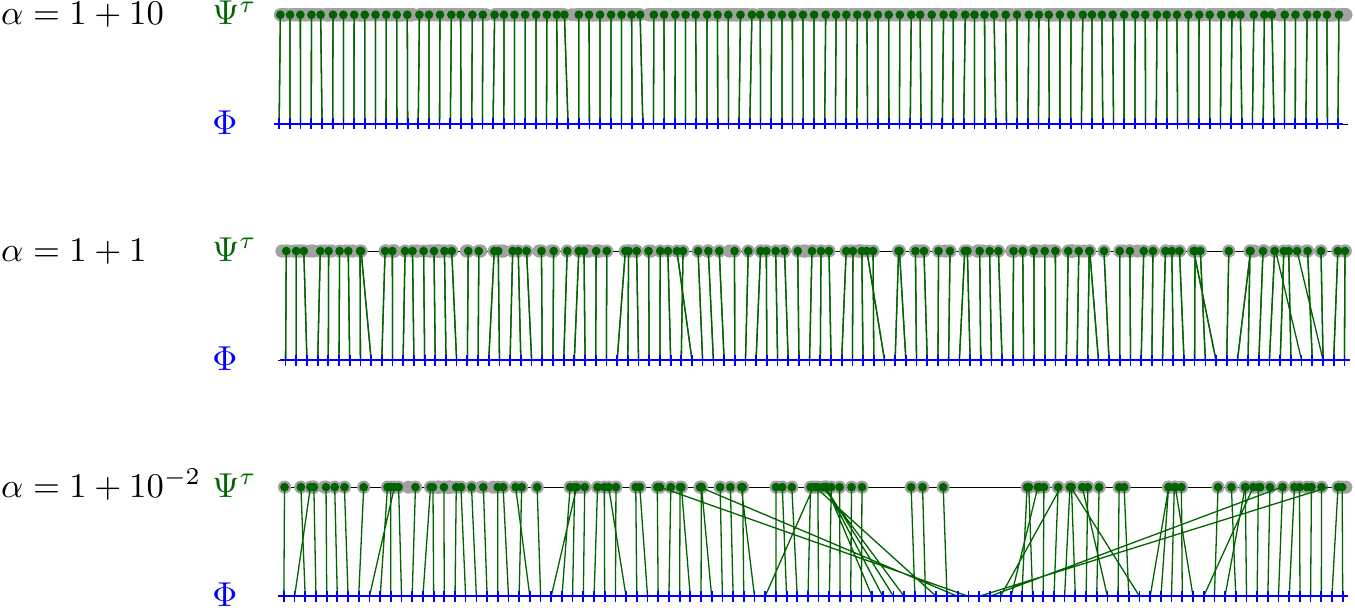}
  \caption{Samples of 1D hyperuniform stable matchings: a stationarized lattice
    $\Phi$ (blue crosses) is matched to a Poisson point process $\Psi$ with
    intensity $\alpha>1$ (gray points) resulting in the hyperuniform point
    process $\Psi^\tau$ (green points).  The matching is visualized by (green)
    lines. Three samples are depicted, each at a different intensity $\alpha$.}
  \label{fig:samples-1D}
\end{figure}

\begin{figure}[t]
  \centering
  \includegraphics[width=\textwidth]{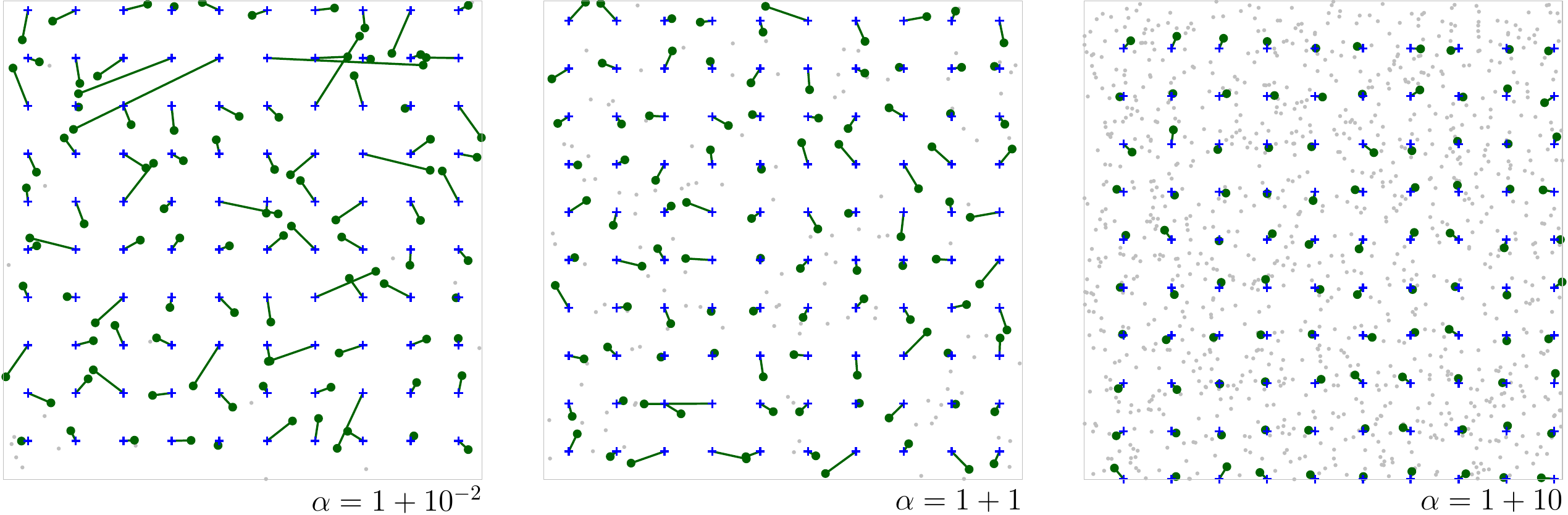}
  \caption{Samples of 2D hyperuniform stable matchings: a stationarized lattice
    $\Phi$ (blue crosses) is matched to a Poisson point process $\Psi$ with
    intensity $\alpha>1$ (gray points) resulting in the hyperuniform point
    process $\Psi^\tau$ (green points).  The matching is visualized by (green)
    lines. Three samples are depicted, each at a different intensity $\alpha$.}
  \label{fig:samples-2D}
\end{figure}

Let $\Psi$ be an ergodic simple point process
on $\R^d$ with finite intensity $\alpha>1$. 
Throughout we identify a {\em point process}
with its {\em support}, where we refer to \cite{LastPenrose17}
for notation and terminology from point process theory.
Let $U$ be a $[0,1)^d$-valued random variable, independent
of $\Psi$ and let $\Phi:=\Z^d+U$ be a {\em randomized lattice}, where 
$B+x:=\{y+x:y\in B\}$ for $x\in\R^d$ and $B\subset\R^d$.
{Further, if $U$ is uniform in $[0,1)^d$, we call the randomized
lattice $\Phi$ a {\em stationarized lattice}.}
A {\em matching} between $\Phi$ and $\Psi$ is a mapping
$\tau\equiv\tau(\Phi,\Psi,\cdot)\colon\R^d\to \R^d\cup\{\infty\}$ 
such that almost surely $\tau(\Phi)\subset\Psi$, $\tau(\Psi)\subset\Phi\cup\{\infty\}$, and
$\tau(x)=p$ if and only if $\tau(p)=x$ for $p\in\Phi$ and $x\in\Psi$.
Since $\alpha>1$, as a rule we will have $\tau(x)=\infty$ for
infinitely many $x\in\Psi$. Following \cite{HPPS09},
we call a matching {\em stable} if there is no pair
$(p,x)\in\Phi\times\Psi$ such that
\begin{align}\label{stablem}
|p-x| < \min\{|p-\tau(p)|,|x-\tau(x)|\},
\end{align} 
{where $|x- \infty| = \infty$ for each $x \in \R^d$.}
(Such a pair is  called \textit{unstable}.)
In the absence of {\em infinite descending chains}
the {\em mutual nearest neighbour matching} shows that
a stable matching exists and is almost surely uniquely 
determined; see~\cite{HPPS09} and Section \ref{secmatchingpp}.
In this paper we study the point process
$\Psi^\tau:=\{\tau(p):p\in\Phi\}=\{x\in\Psi:\tau(x)\ne\infty\}$; a stable thinning of $\Psi$.
Figures~\ref{fig:samples-1D} and \ref{fig:samples-2D} 
show realizations of $\Psi^\tau$ when $\Psi$ is a stationary Poisson point process in 1D and 2D, respectively.

It is not hard to see that $\Psi^\tau$ is stationary and ergodic under translations from $\Z^d$. 
Moreover, if $U$ has the uniform distribution, then $\Psi^\tau$ is stationary and ergodic (under all translations)
and has intensity $1$. It might be helpful to interpret $\Psi^\tau$ as the output process
of a spatial {\em queueing system}, where $\Phi$ represents the
locations {(arrival times)} of customers and $\Psi$ the potential {departure} times.
In fact, a one-sided version of the stable matching can be interpreted as a queueing system with deterministic
arrival times, one server with infinite waiting capacity and a {\em last in, first out} service discipline. The output process of a closely related
system (with a first in, first out service discipline) was studied in \cite{GoldsteinLebowitzSpeer2006,HMNW11}; see~also Section \ref{secone-sided}.

In this paper, we show that the stable thinning $\Psi^\tau$ has several remarkable properties. As mentioned before, when $\Psi$ is the
stationary Poisson point process, $\Psi^{\tau}$ approaches the lattice and the Poisson point process with unit intensity at the two extreme
limits $\alpha \to \infty$ and $\alpha \to 1$, respectively. On the other hand, we prove that if $\Psi$ is a stationary determinantal
point process with a suitably fast decaying kernel, $\Psi^\tau$ is {\em hyperuniform} (or {\em superhomogeneous}), that is, density
fluctuations on large scales are anomalously suppressed; see~\cite{TorquatoStillinger2003, Torquato2018}. This also includes the
stationary Poisson point process.
More precisely, hyperuniformity means that
\begin{align}\label{ehyperun}
\lim_{r\to\infty}\frac{\BV\Psi^\tau(rW)}{\lambda_d(rW)}=0,
\end{align}
where $\lambda_d$ denotes Lebesgue measure on $\R^d$ and $W$ is
an arbitrary convex and compact set with $\lambda_d(W)>0$.
{In general, this notion depends on the observation window
  $W$, see~\cite{KimTorquato2017}.
In our paper, however, we consider the stronger notion of
hyperuniformity defined above.}
We also show that $\Psi^\tau$ is {\em number rigid} when $\Psi$ is a
stationary determinantal point process.
This means that for each bounded Borel set $B\subset\R^d$ the random
number $\Psi^\tau(B)$ is almost surely determined by $\Psi^\tau\cap
B^c$, the restriction of $\Psi^\tau$ to the complement of $B$;
see~\cite{GhoshLebowitz17a,GhoshPeres17} for a definition and discussion of this concept.
Despite these unique properties on large scales, the {\em truncated (or
total) pair correlation function} of  $\Psi^\tau$ is exponentially
decaying (if $U$ is deterministic) when $\Psi$ is the stationary Poisson
point process. In one and two dimensions, there is a close relationship
between hyperuniformity and number rigidity; see~\cite{GhoshLebowitz17b}.
To reiterate why hyperuniformity and number rigidity of $\Psi^{\tau}$ is
interesting, note that the original point process $\Psi$ in many of the
above cases (for example, the stationary Poisson point process) does not need to
be hyperuniform or number rigid.
Further, our results supply a large class of examples of hyperuniform
and number rigid point processes in all dimensions.
There are few examples of number rigid point processes in higher
dimensions ($d \geq 3$).
The ones we are aware of are
small i.i.d.~Gaussian perturbations of a lattice~\cite{Peres2014},
stationary point processes satisfying  DLR (Dobrushin-Landford-Ruelle)
equations with appropriate interacting
potentials~\cite{Dereudre2018} and the hierarchical Coulomb gas in
$d = 3$~\cite{Chatterjee2017}.
We wish to also point out that first rigidity and hyperuniformity
reveal something intrinsically interesting about the point process,
and second
these properties are also useful to understand percolation models on
point processes (see page~5 of \cite{Ghosh2015}).

The paper is organized as follows. We define the matching algorithm
in Section \ref{sdefinition} and strongly supported by \cite{HPPS09}
we show that the stable partial matching between two locally finite
point sets is well defined and unique in the absence of 
infinite  descending chains. In Section \ref{secmatchingpp}, we discuss a few
basic properties of stable partial matchings $\tau$ for a general
stationary and ergodic point process $\Psi$ with intensity
$\alpha\ge 1$. The point process $\Phi$ is assumed to be stationary
with intensity $1$ or a randomized lattice. Theorems \ref{2.8} and
\ref{th_no_inf_chains} show that the matching $\tau$ and hence the
thinned process $\Psi^\tau$ are well-defined for a huge class of point
processes.  In Section \ref{sec:matching_flower}, we introduce and
study the {\em matching flower}, one of our key technical tools.  It
is a {\em stopping set} that determines the matching partner
of a given point. Starting from
Section \ref{stail}, we assume that $\Phi$ is a (possibly randomized)
lattice. In Section \ref{stail}, we consider the tail properties of
the matching distance $\|\tau(0)\|$ of the origin $0$ from its
matching partner and also that of the size of the matching
flower. When $\Psi$ is a stationary determinantal point process (which includes
the Poisson process) with intensity $\alpha > 1$, the first main result of our
paper (Theorem \ref{t4.1}) shows that the distance $\|\tau(0)\|$ has an
exponentially decaying tail; a crucial fact for all of our later results on
determinantal processes. Our proof is inspired by ideas from \cite{HPPS09}.
{The assumption of a determinantal point process allows us
  to use the concentration inequality in \cite{Pemantle14} for
  suitable Lipschitz functionals.}
Then, we show in Theorem \ref{t5.19} that the size
of the matching flower for a stationary point process (satisfying 
the assumptions of Theorem \ref{th_no_inf_chains})
has a sub-exponentially decaying tail provided the matching distance $\|\tau(0)\|$ has a
sub-exponentially decaying tail. Combining Theorems \ref{t4.1} and
\ref{t5.19}, we deduce sub-exponentially decaying tail for the size of
the matching flower of stationary determinantal point process with
intensity $\alpha > 1$ in Corollary \ref{c5.21}. We also study the
tail behaviour of the matching distance seen from an extra point added
to $\Psi$. In case of the Poisson point process, we study the tail
behaviour of the matching distance and the size of the matching flower
of a typical point (Theorem \ref{t4.7} and Proposition \ref{p4.5}). In
Section \ref{sec:hyperuniformity}, we first prove a general criterion
for hyperuniformity (Theorem \ref{t:hypuniform_mixing}), which is
satisfied for $\Psi^\tau$, whenever $\Phi$ is a deterministic lattice
and $\Psi$ is determinantal with a `fast decaying' kernel and
intensity $\alpha > 1$. The general criteria involve
sub-exponentially decaying tail for the size of the matching flower
and suitable decay of mixing coefficients of the point process
$\Psi$.
{Here, we use mixing properties of determinantal point processes.}
In Section \ref{srigid}, we prove number rigidity of $\Psi^\tau$ assuming only
sub-exponentially decaying tail for the size of the matching flower.
In Section \ref{spair}, we assume that $\Phi$ is a deterministic
lattice and that $\Psi$ is a stationary Poisson process and prove that the
truncated pair correlation function is exponentially decaying. 
{In} Section \ref{secone-sided}, we consider a one-sided stable matching on
the line, a simpler version of the two-sided case. This can be
interpreted as a queueing system with a Last-In-First-Out rule and
where the input and departure processes are both considered to be
$\Z$-stationary point processes.
Under weak assumptions, we show that the asymptotic variance profile of
the output process is the same as that of the input process.
In other words, the output process is hyperuniform or non-hyperuniform
or hyperfluctuating, whenever the input is.
In Section~\ref{sec_simulations}, extensive simulations of a matching on
the torus (in 1D, 2D, and 3D) between the lattice and determinantal and
Poisson point processes confirm the hyperuniformity of $\Psi^\tau$ and
study how local and global structural characteristics change with
varying intensity $\alpha$. Our code is freely available as an \verb+R+-package via~\cite{Klatt2018}. We conclude the paper by presenting some conjectures and some further
directions of research.
The appendix contains some basic material on point processes.

{In a sense, the Poisson point process is a degenerate case of a determinantal
  point process.
  Nevertheless, we consider the Poisson point
  process to be a determinantal point process for ease of stating
  our results and proofs.
  The facts about determinantal point processes that we use in our paper
  are provided in the appendix (Section
  \ref{s:app}), see Theorems \ref{t:dpp_conc} and \ref{t:dpp_mixing} as
  well as \eqref{e:Had}.}

{Our results on hyperuniformity and number rigidity are stated under
  general conditions on mixing of the point process $\Psi$ (satisfied by determinantal point processes with fast-decaying kernels) and
  sub-exponentially decaying tail bound on the size of the matching
  flower. 
  As pointed out above, the proof of the latter uses the
  concentration properties of a determinantal process.
  Though exponential decay of
  pair correlation function is proven only for $\Psi$ being a Poisson
  point process, we believe that this can also be extended to suitably
  mixing point process $\Psi$ and again assuming that the size of the
  matching flower has a sub-exponentially decaying tail.}

\section{Stable matchings}\label{sdefinition}

We let $\mathbf{N}$ denote the space of all locally finite subsets
$\varphi\subset\R^d$ equipped with the $\sigma$-field $\mathcal{N}$ generated by
the mappings $\varphi\mapsto \varphi(B):=\card(\varphi\cap B)$,
$B\in\mathcal B^d$. Here $\mathcal B^d$ denotes the Borel $\sigma$-field
in $\R^d$. 
We will identify $\varphi\subset\R^d$ with the associated counting measure. 

Let $d(x,A):=\inf\{\|y-x\|:y\in A\}$ denote the distance between
a point $x\in\R^d$ and a set $A\subset\R^d$, where $\inf\emptyset:=\infty$
and where $\|\cdot\|$ denotes the Euclidean norm.
Let $\varphi\in\bN$ and $x\in\R^d$.
We call $p\in\varphi$ {\em nearest neighbour} of $x$ in $\varphi$ if 
$\|x-p\|\le \|x-q\|$ for all $q\in\varphi$. The lexicographically smallest
among the nearest neighbours of $x$ is denoted by $N^-(\varphi,x)$
and the largest by $N^+(\varphi,x)$.
For completeness we define $N(\varphi,x):=\infty$ if $\varphi=\emptyset$.

Given $\varphi,\psi\in\bN$ we now define a 
\textit{mutual nearest neighbor matching} from  $\varphi$ to $\psi$,
{closely following \cite{HPPS09}.}
For $x\in\R^d$, we define $\tau_1(\varphi,\psi,x):=y$ if
$N^-(\psi,x)=y$ and $N^+(\varphi,y)=x$ or $N^+(\varphi,x)=y$ and $N^-(\psi,y)=x$.
Otherwise we put $\tau_1(\varphi,\psi,x):=\infty$. 
{For all $n\in\N$, we inductively define a mapping
$\tau_{n+1}(\varphi,\psi,\cdot):\R^d\to\R^d$ by
$\tau_{n+1}(\varphi,\psi,x):=\tau_1(\varphi_n,\psi_n,x)$, where
\[
  \varphi_n:=\{x\in\varphi:\tau_n(\varphi,\psi,x)\notin\psi\},\qquad \psi_n:=\{x\in\psi:\tau_n(\varphi,\psi,x)\notin\varphi\}.
\]
}For $x\in\R^d$ we define $\tau(\varphi,\psi,x):=\tau_n(\varphi,\psi,x)$ 
if $x\in(\varphi_n\setminus \varphi_{n+1})\cup (\psi_n\setminus \psi_{n+1})$
for some $n\ge 0$, where $\varphi_0:=\varphi$ and $\psi_0:=\psi$.
Define the sets of unmatched points
\[
  \varphi_\infty:=\bigcap_{n=1}^\infty \varphi_n,\qquad \psi_\infty:=\bigcap_{n=1}^\infty \psi_n.
\]
For $x\in\varphi_{\infty}\cup\psi_{\infty}$, we set $\tau(\varphi,\psi,x):=\infty$.
For completeness we define $\tau(\varphi,\psi,x):=x$ in all other cases, that is for $x\notin\varphi\cup\psi$. 
We refer to the above iterative
procedure for defining the matching as {\em mutual nearest neighbour matching} as 
above or more simply as {\em matching algorithm}. {See Figure
  \ref{fig:matching-algorithm} for an illustration of the algorithm
  for point processes on the line.}

Note that $\tau$ is a measurable mapping on $\bN\times\bN\times\R^d$ with
the {\em covariance property}
\begin{align}\label{eshiftcov}
\tau(\varphi+z,\psi+z,x+z)=\tau(\varphi,\psi,x)+z,\quad \varphi,\psi\in\bN,\,x,z\in\R^d. 
\end{align}

Given points $p\in\varphi$ and $x,y\in\psi$,
$p$ \textit{prefers} $x$ to $y$ if either $\|x-p\|<\|y-p\|$
or $\|x-p\|=\|y-p\|$ and $x$ is lexicographically strictly smaller than $y$.
Correspondingly for $x\in\psi$ and $p,q\in\varphi$,
$x$ \textit{prefers} $p$ to $q$ if either $\|p-x\|<\|q-x\|$
or $\|p-x\|=\|q-x\|$ and $p$ is lexicographically greater than $q$.

\begin{figure}[t]
  \centering
  \includegraphics[width=\textwidth]{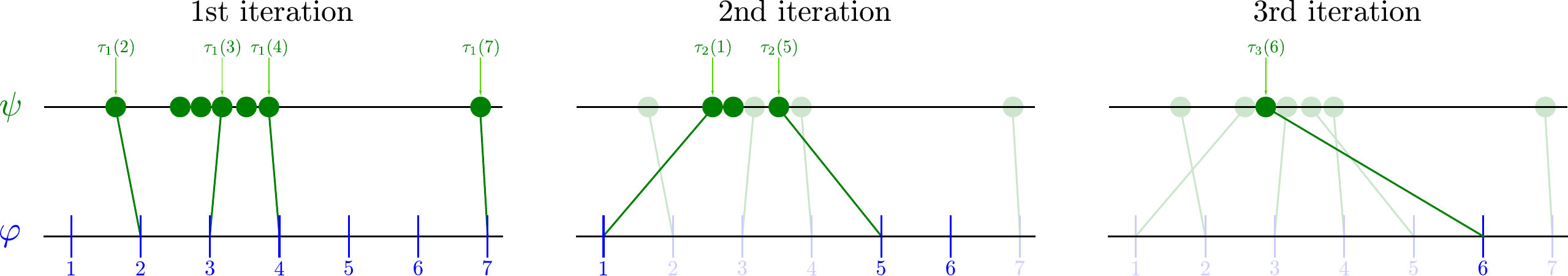}
  \caption{The iterated mutually closest matching algorithm:
  match mutually nearest neighbors (left),
  then repeat considering only the remaining points (center and right).}
  \label{fig:matching-algorithm}
\end{figure}

We now refine the notion of stable matchings as discussed in the introduction.
A (partial) matching is in general defined by a measurable mapping
$\tau':\bN\times\bN\times\R^d\to\R^d\cup\{\infty\}$
such that $\tau'(\varphi,\psi,q)\in\psi\cup\{\infty\}$, 
$\tau'(\varphi,\psi,x)\in\varphi\cup\{\infty\}$, 
and $\tau'(\varphi,\psi,q)=x$ if and only if
$\tau'(\varphi,\psi,x)=q$ for $q\in\varphi$ and $x\in\psi$. {It is
called partial because not all points in $\varphi \cup \psi$ need to be
matched, i.e., some of them are mapped to $\infty$.} Given $\varphi,\psi\in\bN$, we call a pair $(p,x)\in\varphi\times\psi$
unstable in $(\varphi,\psi)$ if $x$ prefers $p$ to
$\tau'(\varphi,\psi,x)$ and $p$ prefers $x$ to $\tau'(\varphi,\psi,p)$.
If there is no unstable pair in $(\varphi,\psi)$, then we call $\tau'$ a
\textit{stable} matching from $\varphi$ to $\psi$.  Moreover if $\tau'$
is a stable matching from $\varphi$ to $\psi$ for all
$(\varphi,\psi)\in\bN\times\bN$, then we simply call it \textit{stable}.

\begin{proposition}\label{stable-matching}
  The mutual nearest neighbor matching $\tau$ is stable.
\end{proposition}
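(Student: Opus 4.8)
The plan is to argue by contradiction: assume some pair $(p,x)\in\varphi\times\psi$ is unstable in $(\varphi,\psi)$ and extract a contradiction from the greedy structure of the matching algorithm. Recall first that, from a fixed point's perspective, the preference relation is a strict total order (distance, broken lexicographically), so the phrase ``$p$ prefers $x$ to $\tau(\varphi,\psi,p)$'' means precisely that $x$ is strictly more preferred than $\tau(\varphi,\psi,p)$ in $p$'s order. I would begin by recording two elementary structural facts about the iterated construction. First, the survivor sets are nested, $\varphi=\varphi_0\supseteq\varphi_1\supseteq\cdots$ and $\psi=\psi_0\supseteq\psi_1\supseteq\cdots$, so that ``present at round $n$'' (i.e.\ lying in $\varphi_{n-1}$, resp.\ $\psi_{n-1}$) is a monotone notion: once a point is matched it is removed for good. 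Second, and crucially, if $p\in\varphi$ is matched in round $n$, then by the very definition of $\tau_1$ through $N^-,N^+$ its partner is $N^-(\psi_{n-1},p)$; that is, $\tau(\varphi,\psi,p)$ is the \emph{most preferred} $\psi$-point of $p$ among all $\psi$-points present at round $n$ (and symmetrically $\tau(\varphi,\psi,x)=N^+(\varphi_{n-1},x)$ for a $\psi$-point $x$ matched at round $n$). Both facts are routine bookkeeping from the definitions.

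With these in hand the core of the argument is short. Let $n_p$ and $n_x$ be the rounds in which $p$ and $x$ are matched, set to $\infty$ if the point is never matched, and consider the smaller of the two. Suppose first $n_p\le n_x$ with $n_p<\infty$. Since $x$ is not removed before round $n_x\ge n_p$, it is still present at round $n_p$, i.e.\ $x\in\psi_{n_p-1}$. By the second fact, $\tau(\varphi,\psi,p)=N^-(\psi_{n_p-1},p)$ is $p$'s most preferred point among the survivors present at round $n_p$; in particular $p$ weakly prefers $\tau(\varphi,\psi,p)$ to the still-present point $x$. This directly contradicts the instability hypothesis that $p$ strictly prefers $x$ to $\tau(\varphi,\psi,p)$. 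The case $n_x<n_p$ is symmetric, using that $p$ is present at round $n_x$ and that $\tau(\varphi,\psi,x)=N^+(\varphi_{n_x-1},x)$ is $x$'s most preferred survivor. This single step also disposes of the situation in which exactly one of the two points is unmatched: one simply runs the argument from the vantage point of the point with the finite (hence strictly smaller) matching round.

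The one configuration this does not reach is when both $p$ and $x$ are unmatched, so $n_p=n_x=\infty$ and there is no finite round at which to read off the contradiction; I expect this to be the main obstacle. Here the whole matter reduces to the convention governing comparisons against $\infty$, since $\tau(\varphi,\psi,p)=\tau(\varphi,\psi,x)=\infty$: the preference relation of Section~\ref{sdefinition} is only declared between genuine points of $\varphi$ (resp.\ $\psi$), so the instability test cannot be evaluated when a coordinate's partner is $\infty$, and no unstable pair with an unmatched coordinate is produced. I would make this explicit, and remark that it is precisely this case where the standing hypothesis excluding infinite descending chains is relevant (a bi-infinite descending chain is exactly what would leave survivors on \emph{both} sides and force one to confront mutually unmatched pairs). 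Modulo this case distinction, the ``matched to your current favourite'' property of each round does all the work, and the remainder of the proof is the verification of the two structural facts above.
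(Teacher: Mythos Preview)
Your argument is essentially the paper's own: take the first round at which one of $p,x$ is matched and observe that the matched point then receives its most-preferred survivor among those still present, contradicting the instability hypothesis; the paper compresses exactly this into a single sentence. One small correction: there is no ``standing hypothesis excluding infinite descending chains'' in force for this proposition---that assumption first enters at Lemma~\ref{as-matching-of-points}---so your resolution of the both-unmatched case must (and does) rest solely on the fact that the preference relation of Section~\ref{sdefinition} is declared only between genuine points of $\varphi$ or $\psi$, rendering the instability test vacuous when a partner is $\infty$.
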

{\em Proof:} Assume that there is an unstable pair $(p,x)\in\varphi\times\psi$.
By definition of $\tau$ there exists an $n\in\N_0$ such that $p\in\varphi_n$ and $x\in\psi_n$
as well as $N^-(\psi_n,p)=\tau(\varphi,\psi,p)\neq x$
or $N^+(\varphi_n,x)=\tau(\varphi,\psi,x)\neq q$, a contradiction.\qed

\bigskip
We call $(\varphi,\psi)\in\bN\times\bN$ \textit{non-equidistant}
if there do not exist $p,q\in\varphi$ and $x,y\in\psi$ with $\{p,x\}\neq\{q,y\}$ and $\|p-x\|=\|q-y\|$.
Note that the mutual nearest neighbor matching is well defined and stable independent of this property,
but if $(\varphi,\psi)$ is \textit{non-equidistant},
then $\tau(\varphi,\psi,\cdot)=\tau(\psi,\varphi,\cdot)$
due to the symmetry of the Euclidean norm.

Let $\varphi,\psi\in\mathbf{N}$ and $n\ge 2$. A sequence $(z_1,\ldots,z_n)$ of
points in $\R^d$ is called a {\em descending chain} in $(\varphi,\psi)$ if
$z_i\in\varphi$ for odd $i\in\{1,\ldots,n\}$, $z_i\in\psi$ for even
$i\in\{1,\ldots,n\}$ and $z_i$ prefers $z_{i+1}$ to $z_{i-1}$ for each
$i\in\{2,\ldots,n-1\}$.
Note that any pair $(z_1,z_2)\in\varphi\times\psi$ is a descending chain.
An infinite sequence $(z_1,z_2,\ldots)$ of points in $\R^d$ is called an
{\em infinite descending chain} if for every $n\ge 2$, $(z_1,\ldots,z_n)$ is a descending chain.
The following result  can be proved as Lemma~15 in \cite{HPPS09}.

\begin{lemma}\label{as-matching-of-points}
  Let $\varphi,\psi\in\bN$,
  and suppose that there is no infinite descending chain in $(\varphi,\psi)$.
  Then $\varphi_{\infty}=\emptyset$ or $\psi_{\infty}=\emptyset$,
  and there is a unique stable partial matching from $\varphi$ to $\psi$,
  which is produced by the mutual nearest neighbor matching.
\end{lemma}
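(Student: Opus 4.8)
The plan is to handle the three assertions in turn. Existence needs almost no work: Proposition~\ref{stable-matching} already shows that the mutual nearest neighbour matching $\tau$ is stable, so once we know that unmatched points can occur on only one side, $\tau$ is itself a stable partial matching. Thus the two substantive tasks are the dichotomy $\varphi_\infty=\emptyset$ or $\psi_\infty=\emptyset$ (where the hypothesis on descending chains enters) and uniqueness.

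For the dichotomy I would argue the contrapositive: assuming $\varphi_\infty\neq\emptyset$ and $\psi_\infty\neq\emptyset$, I construct an infinite descending chain. The construction has two steps, and the first is the crux. I claim there is no mutual nearest neighbour pair inside $(\varphi_\infty,\psi_\infty)$, i.e.\ no $p\in\varphi_\infty$, $x\in\psi_\infty$ with $x=N^-(\psi_\infty,p)$ and $p=N^+(\varphi_\infty,x)$. Suppose such a pair existed and set $\rho:=\|p-x\|$. By local finiteness only finitely many points of $\varphi\cup\psi$ lie within distance $\rho$ of $p$ or of $x$, and each of these that is not in $\varphi_\infty\cup\psi_\infty$ is matched by $\tau$ at some finite stage; after the largest such stage $M$ all of them have been removed, so $N^-(\psi_M,p)=x$ and $N^+(\varphi_M,x)=p$. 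Then $p$ and $x$ are mutual nearest neighbours in $(\varphi_M,\psi_M)$ and are matched at stage $M+1$, contradicting $p\in\varphi_\infty$. This is exactly the point where the global property ``never matched'' is converted into the local property ``no mutual nearest neighbour'', and it is where I expect the most care to be required, since it relies on both local finiteness and the staged removal.

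Granting this, the chain is built greedily inside $(\varphi_\infty,\psi_\infty)$. Pick $a_1\in\varphi_\infty$ and put $b_1:=N^-(\psi_\infty,a_1)$; since $(a_1,b_1)$ is not a mutual nearest neighbour pair, $a_2:=N^+(\varphi_\infty,b_1)\neq a_1$ and $b_1$ strictly prefers $a_2$ to $a_1$. Likewise $b_2:=N^-(\psi_\infty,a_2)\neq b_1$, for otherwise $(a_2,b_1)$ would be a mutual nearest neighbour pair, and $a_2$ prefers $b_2$ to $b_1$; continuing in this way never terminates. The sequence $a_1,b_1,a_2,b_2,\dots$ alternates between $\varphi$ and $\psi$ and has strictly decreasing consecutive distances, so it is an infinite descending chain in $(\varphi,\psi)$ (the strict decrease also forces the points to be distinct), contradicting the hypothesis. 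Hence $\varphi_\infty=\emptyset$ or $\psi_\infty=\emptyset$, which together with Proposition~\ref{stable-matching} gives existence.

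For uniqueness let $\sigma$ be any stable partial matching from $\varphi$ to $\psi$. I would show by induction on $n$ that $\sigma$ agrees with $\tau$ on every point matched by $\tau$ within the first $n$ stages, and that $\sigma$ maps $\varphi_n$ into $\psi_n\cup\{\infty\}$ and $\psi_n$ into $\varphi_n\cup\{\infty\}$. The inductive step is the usual Gale--Shapley argument: if $(p,x)$ is a mutual nearest neighbour pair of $(\varphi_n,\psi_n)$ but $\sigma(p)\neq x$, then by the induction hypothesis $\sigma(p)\in\psi_n\cup\{\infty\}$ and $\sigma(x)\in\varphi_n\cup\{\infty\}$, whence $p$ prefers $x$ to $\sigma(p)$ and $x$ prefers $p$ to $\sigma(x)$, making $(p,x)$ unstable for $\sigma$; the containment statement is then maintained because the $\psi$-points matched at stage $n+1$ are, by what was just shown, occupied by $\sigma$ and hence unavailable. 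This forces $\sigma=\tau$ on $(\varphi\setminus\varphi_\infty)\cup(\psi\setminus\psi_\infty)$. Finally, invoking the dichotomy, say $\psi_\infty=\emptyset$: then every point of $\psi$ is matched by $\sigma$ to its $\tau$-partner, so no point of $\psi$ is left for a point of $\varphi_\infty$, forcing $\sigma(p)=\infty=\tau(p)$ there too. Therefore $\sigma=\tau$.
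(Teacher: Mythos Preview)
Your argument is correct and self-contained; the paper does not actually supply a proof but simply refers to Lemma~15 in \cite{HPPS09}, so there is nothing to compare against beyond noting that your strategy---no mutual nearest neighbour pair inside $(\varphi_\infty,\psi_\infty)$, greedy chain construction, and an induction on stages for uniqueness---is precisely the one used there.

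One small inaccuracy: you write that the constructed chain has ``strictly decreasing consecutive distances'' and that this forces distinctness. In the possibly equidistant setting the distances are only non-increasing; what is strict is the \emph{preference} relation (thanks to the lexicographic tie-break), and that is all the definition of a descending chain requires. Distinctness of the $z_i$ is neither guaranteed nor needed. This does not affect the validity of your proof, since the sequence $a_1,b_1,a_2,b_2,\dots$ satisfies the preference condition at every step and is therefore an infinite descending chain in $(\varphi,\psi)$ regardless.
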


\begin{remark}
  In $d=1$, there is no infinite descending chain in $(\Z,\psi)$ for any $\psi\in\bN$.
  Otherwise there would be an infinite descending chain in $\Z$.
  The corresponding statement does not hold for any $d>1$.
\end{remark}

\begin{proposition}\label{matching-of-psi-tau}
  Let $\varphi,\psi\in\bN$ and $x\in\R^d$, then
  $\tau(\varphi\setminus\varphi_{\infty},\psi\setminus\psi_{\infty},x)=\tau(\varphi,\psi,x)$.
\end{proposition}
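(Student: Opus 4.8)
The plan is to run the matching algorithm on the reduced pair $(\varphi',\psi'):=(\varphi\setminus\varphi_\infty,\,\psi\setminus\psi_\infty)$ and to compare it, round by round, with the algorithm on $(\varphi,\psi)$. Writing $\varphi'_n,\psi'_n$ for the surviving sets produced by the recursion applied to $(\varphi',\psi')$, I would prove by induction on $n\ge 0$ that $\varphi'_n=\varphi_n\setminus\varphi_\infty$ and $\psi'_n=\psi_n\setminus\psi_\infty$. The base case $n=0$ is immediate. Granting the hypothesis at stage $n$, the inductive step reduces to showing that the set of mutually-nearest-neighbour pairs is the same in $(\varphi_n,\psi_n)$ and in $(\varphi'_n,\psi'_n)$: the same pairs are then matched and removed at round $n+1$, and since every matched point lies outside $\varphi_\infty\cup\psi_\infty$, removing from $\varphi'_n$ exactly the $\varphi$-points matched at round $n+1$ yields $\varphi'_{n+1}=\varphi_{n+1}\setminus\varphi_\infty$, and similarly for $\psi$. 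Throughout I would use the elementary selection principle that if $B'\subseteq B$ and the selected nearest neighbour $N^-(B,x)$ lies in $B'$, then $N^-(B',x)=N^-(B,x)$ (deletion cannot create a closer point, and lexicographic minimality among the nearest points is inherited); likewise for $N^+$.

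One inclusion is straightforward. If $x\in\varphi_n$ and $y\in\psi_n$ are mutual nearest neighbours, i.e. $y=N^-(\psi_n,x)$ and $x=N^+(\varphi_n,y)$, then $x$ and $y$ are matched at round $n+1$, so $x\notin\varphi_\infty$ and $y\notin\psi_\infty$; hence $x\in\varphi'_n$ and $y\in\psi'_n$. Applying the selection principle with the retained neighbours $y$ and $x$ gives $N^-(\psi'_n,x)=y$ and $N^+(\varphi'_n,y)=x$, so $(x,y)$ is a mutual-nearest-neighbour pair in $(\varphi'_n,\psi'_n)$ as well.

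The reverse inclusion is the crux, because a priori the deletion of an unmatched point could manufacture a new mutual-nearest-neighbour pair. The key lemma I would isolate is: no point of $\psi_\infty$ is the nearest neighbour $N^-(\psi_n,x)$ of any $x\in\varphi_n\setminus\varphi_\infty$ (and symmetrically with $\varphi$ and $\psi$ exchanged). To prove it, suppose $z_0:=N^-(\psi_n,x)\in\psi_\infty$. Since $\psi_\infty\subseteq\psi_m$ for every $m$, the point $z_0$ is never deleted; as nothing in $\psi_n$ is closer to $x$ than $z_0$ and $\psi_m\subseteq\psi_n$, the selection principle gives $N^-(\psi_m,x)=z_0$ for all $m\ge n$. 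But $x\notin\varphi_\infty$, so $x$ is matched at some round $m+1\ge n+1$, which by the definition of $\tau_1$ forces its partner to be $N^-(\psi_m,x)=z_0$ and thus matches $z_0$ — contradicting $z_0\in\psi_\infty$. Notably this argument uses only the round recursion and needs no assumption on infinite descending chains. Granting the lemma, for a pair $(x,y)$ that is mutual nearest neighbour in $(\varphi'_n,\psi'_n)$ I obtain $N^-(\psi_n,x)=N^-(\psi'_n,x)=y$ and $N^+(\varphi_n,y)=N^+(\varphi'_n,y)=x$, so $(x,y)$ is a mutual-nearest-neighbour pair in $(\varphi_n,\psi_n)$ too. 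This closes the induction.

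Finally I would intersect over $n$: the induction gives $\varphi'_\infty=\bigcap_n(\varphi_n\setminus\varphi_\infty)=\varphi_\infty\setminus\varphi_\infty=\emptyset$, and likewise $\psi'_\infty=\emptyset$, so the reduced algorithm leaves no point unmatched; and since the matched pairs coincide at every round, $\tau(\varphi',\psi',x)=\tau(\varphi,\psi,x)$ for every $x\in\varphi'\cup\psi'$, both sides being finite and equal. The substance of the statement is precisely this agreement on matched points together with $\varphi'_\infty=\psi'_\infty=\emptyset$; on the remaining points the identity is read off from the stated conventions. The only genuinely non-formal step is the crux lemma, and it rests on the single observation that an unmatched point which is somebody's nearest neighbour can never be displaced, forcing that somebody to remain unmatched as well.
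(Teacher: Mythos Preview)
Your proof is correct and rests on the same observation the paper uses: a point that eventually gets matched prefers its partner to every point of $\varphi_\infty\cup\psi_\infty$, so deleting the unmatched points does not disturb the mutual-nearest-neighbour structure at any round. The paper compresses this into two sentences (``Otherwise $x$ prefers $\tau(\varphi,\psi,x)$ to each point in $\varphi_\infty\cup\psi_\infty$, which by the definition of the algorithm concludes the proof''), whereas you carry out the round-by-round induction explicitly; your crux lemma is precisely the rigorous form of the paper's one-line claim, and the induction you set up is the natural way to cash in that claim.
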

{\em Proof:}
If $x\in\varphi_{\infty}\cup\psi_{\infty}$ (or $x\notin\varphi\cup\psi$), then
$\tau(\varphi\setminus\varphi_{\infty},\psi\setminus\psi_{\infty},x)=\tau(\varphi,\psi,x)=\infty$.
Otherwise $x$ prefers $\tau(\varphi,\psi,x)$ to each point in $\varphi_{\infty}\cup\psi_{\infty}$,
which by the definition of the algorithm concludes the proof.
\qed

\bigskip

For later use we define the {\em the matching status} of a
point as the (measurable) function $M\colon\bN\times\bN\times\R^d$ {given by
\begin{align}\label{e3.7}
M(\varphi,\psi,x):=\I\{\tau(\varphi,\psi,x)\in \varphi \cup \psi\},\quad 
(\varphi,\psi,x)\in \mathbf{N}\times \mathbf{N}\times\R^d.
\end{align}
}

\section{Matching of point processes}\label{secmatchingpp}

A (simple) {\em point process} on $\R^d$ (see e.g.\
\cite{Kallenberg,LastPenrose17}) is a random element $\Psi$ of
$\mathbf{N}$ defined on some probability space
$(\Omega,\mathcal{F},\BP)$. Such a point process is {\em stationary}
if $\Psi$ and $\Psi+x$ have the same distribution for all
$x\in\R^d$. In that case we define the {\em intensity} $\alpha$ of
$\Psi$ by $\alpha:=\BE[\Psi([0,1]^d)]$ and we have that
$\BE[\Psi(B)] = \alpha \lambda_d(B)$ for any Borel set $B$.
Some more point process notions such as the correlation functions are
defined in the appendix, see Section~\ref{s:app}.

In this section we consider point processes $\Phi$ and $\Psi$ on $\R^d$.
Later in the paper we shall assume that $\Psi$ is determinantal
(in particular Poisson) and that $\Phi$ is a (randomized) lattice.
Let $\tau$ be the stable matching introduced in the preceding section and define
\begin{align*}
\Phi^\tau:=\{p\in\Phi:\tau(\Phi,\Psi,p)\ne\infty\},\qquad 
\Psi^\tau:=\{x\in\Psi:\tau(\Phi,\Psi,x)\ne\infty\}.
\end{align*}
If $\Phi$ and $\Psi$ are stationary (resp.~ergodic) then it follows from
the covariance property \eqref{eshiftcov} that
$\Phi^\tau$ and $\Psi^\tau$ are stationary (resp.~ergodic) as well.

Next we show that for jointly stationary
and ergodic point processes, in the process with the lesser intensity
all points are matched (almost surely).

\begin{theorem}\label{2.8} Assume that $\Phi$ and $\Psi$ are jointly
stationary and ergodic with intensities 1 and $\alpha\ge 1$, respectively.
Assume also that $(\Phi,\Psi)$  does almost surely
not contain an infinite descending chain. Then $\BP(\Phi^\tau=\Phi)=1$.
If $\alpha=1$, then also $\BP(\Psi^\tau=\Psi)=1$.
\end{theorem}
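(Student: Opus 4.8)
The plan is to compare the intensities of the matched and unmatched parts of the two processes and to feed the dichotomy of Lemma~\ref{as-matching-of-points} into an ergodicity argument. Write $\Phi_\infty:=\Phi\setminus\Phi^\tau$ and $\Psi_\infty:=\Psi\setminus\Psi^\tau$ for the sets of unmatched points. By the covariance property \eqref{eshiftcov} both are covariant, measurable functionals of $(\Phi,\Psi)$, hence stationary point processes; since $\Phi_\infty\subset\Phi$ and $\Psi_\infty\subset\Psi$ their intensities $\iota_\Phi,\iota_\Psi$ are finite, with $\iota_\Phi\le 1$ and $\iota_\Psi\le\alpha$.

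First I would establish the intensity balance between the two matched processes. The matching $\tau$ restricts to a covariant bijection between $\Phi^\tau$ and $\Psi^\tau$, characterised by $\tau(p)=x\iff\tau(x)=p$. Transporting one unit of mass from each matched $p\in\Phi^\tau$ to its partner $\tau(p)\in\Psi^\tau$ and invoking the mass-transport principle (equivalently, a refined Campbell computation using this bijection) shows that $\Phi^\tau$ and $\Psi^\tau$ share a common intensity $\beta$. Consequently $\iota_\Phi=1-\beta$ and $\iota_\Psi=\alpha-\beta$, and in particular $\beta\le 1$.

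Next I would exploit ergodicity. The events $\{\Phi_\infty\ne\emptyset\}$ and $\{\Psi_\infty\ne\emptyset\}$ are translation invariant, so by ergodicity each has probability $0$ or $1$. By Lemma~\ref{as-matching-of-points} together with the assumed absence of infinite descending chains, almost surely $\Phi_\infty=\emptyset$ or $\Psi_\infty=\emptyset$; hence these two invariant events are almost surely disjoint and cannot both have probability $1$. Suppose, for contradiction, that $\iota_\Phi>0$. Then $\{\Phi_\infty\ne\emptyset\}$ has positive and therefore full probability, forcing $\{\Psi_\infty\ne\emptyset\}$ to be null, so $\iota_\Psi=0$ and thus $\beta=\alpha$. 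Combining $\beta\le 1$ with $\alpha\ge 1$ and $\beta=\alpha$ yields $\alpha=\beta=1$, whence $\iota_\Phi=1-\beta=0$, contradicting $\iota_\Phi>0$. Therefore $\iota_\Phi=0$; since a stationary point process of zero intensity is almost surely empty, $\Phi_\infty=\emptyset$ and $\Phi^\tau=\Phi$ almost surely. When $\alpha=1$, the same conclusion gives $\beta=1$ and hence $\iota_\Psi=\alpha-\beta=0$, so $\Psi_\infty=\emptyset$ and $\Psi^\tau=\Psi$ almost surely.

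I expect the intensity balance $\mathrm{int}(\Phi^\tau)=\mathrm{int}(\Psi^\tau)=\beta$ to be the step requiring the most care: one must justify that the covariant partial matching equates the two intensities, which is precisely where the mass-transport principle (or an exchange/Campbell formula for jointly stationary processes) enters. Everything else is bookkeeping with the nonnegativity of intensities and the $0$--$1$ law supplied by ergodicity.
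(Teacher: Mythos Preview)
Your proof is correct and follows essentially the same approach as the paper: establish the intensity balance $\mathrm{int}(\Phi^\tau)=\mathrm{int}(\Psi^\tau)$ (the paper's \eqref{e3.17}, also stated there without detailed justification), then combine the $0$--$1$ law for the invariant events $\{\Phi_\infty\ne\emptyset\}$, $\{\Psi_\infty\ne\emptyset\}$ with the dichotomy of Lemma~\ref{as-matching-of-points} to rule out $\Phi_\infty\ne\emptyset$. The paper phrases the casework as a trichotomy of invariant events rather than as a contradiction on $\iota_\Phi>0$, but the logical content is identical.
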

{\em Proof:} The proof is similar to the one of Proposition~9 in \cite{HPPS09}.
First, it is not hard to show that $\Phi^\tau$ and $\Psi^\tau$
have the same intensity, that is
\begin{align}\label{e3.17}
\BE \Phi^\tau([0,1)^d)=\BE \Psi^\tau([0,1)^d).
\end{align}
Note that $\{\Phi^\tau \neq \Phi\} = \{\Phi_{\infty} \neq \emptyset\}$
and the same holds true for $\Psi$ as well. By the covariance property
\eqref{eshiftcov},the events $\{\Phi^\tau=\Phi,\Psi^\tau=\Psi\}$,
$\{\Phi^\tau=\Phi,\Psi^\tau\ne \Psi\}$ and $\{\Phi^\tau\ne
\Phi,\Psi^\tau= \Psi\}$ are invariant under joint translations
of $\Phi$ and $\Psi$. By Lemma \ref{as-matching-of-points} and ergodicity,
exactly one of these events has probability one. 
The case $\BP(\Phi^\tau\ne \Phi,\Psi^\tau= \Psi)=1$ contradicts
\eqref{e3.17}, so that only the other two cases remain. This proves
the first assertion. If $\alpha=1$, then $\BP(\Phi^\tau=\Phi,\Psi^\tau\ne \Psi)=1$
also contradicts \eqref{e3.17}, proving the second assertion.
\qed

\bigskip
{From now on, we assume that $\Phi=\Z^d+U$ is a randomized lattice
where $U$ is $[0,1)^d$-valued random variable.
The point process $\Phi$ as well $\Phi^\tau$ and $\Psi^\tau$ are
$\Z^d$-stationary, that is distributionally invariant under shifts from
$\Z^d$.
However, $\Phi$ is generally not stationary unless $U$ is uniformly
distributed on $[0,1)^d$.} 
Given $u\in\R^d$, we often write $\Phi_u:=\Z^d+u$ and
$\Psi^{\tau}_u:=\{x\in\Psi:\tau(\Phi_u,\Psi,x)\ne\infty\}$.

\begin{theorem}\label{2.11}
Assume that $\Psi$ is a stationary point process with intensity
$\alpha\ge 1$. Assume also that $\Psi$ is ergodic under translations
from $\Z^d$. Let $\Phi:=\Z^d+U$, where $U$ is a $[0,1)^d$-valued
random variable, independent of $\Psi$.
Then assuming that $(\Phi,\Psi)$ does almost surely not contain an
infinite descending chain, the assertions of  Theorem \ref{2.8} hold.
\end{theorem}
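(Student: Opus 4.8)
The plan is to reduce to the setting of Theorem \ref{2.8} by conditioning on the value of $U$, which replaces the randomized lattice by a deterministic one, and then to rerun the argument of Theorem \ref{2.8} with translations restricted to $\Z^d$.

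First I would condition on $U=u$. By independence of $U$ and $\Psi$, under the conditional law $\Psi$ keeps its distribution, so it remains $\Z^d$-stationary and $\Z^d$-ergodic, while $\Phi$ becomes the deterministic lattice $\Phi_u=\Z^d+u$. The key observation is that $\Phi_u+z=\Phi_u$ for every $z\in\Z^d$, so a joint integer shift of the pair $(\Phi_u,\Psi)$ acts only on $\Psi$. Consequently $(\Phi_u,\Psi)$ is jointly $\Z^d$-stationary, and any event of the pair that is invariant under joint integer shifts is an event of $\Psi$ alone that is $\Z^d$-invariant; by ergodicity of $\Psi$ it therefore has probability $0$ or $1$. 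I would stress here that the full pair $(\Phi,\Psi)$ with random $U$ is in general \emph{not} $\Z^d$-ergodic, which is precisely why the conditioning step is needed and is the only genuine departure from Theorem \ref{2.8}.

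Next, for each fixed $u$ I would mimic the proof of Theorem \ref{2.8}, reading all stationarity and ergodicity statements with respect to $\Z^d$ rather than $\R^d$. The intensity identity $\BE\Phi^\tau_u([0,1)^d)=\BE\Psi^\tau_u([0,1)^d)$ follows from the bijection between matched $\Phi_u$-points and matched $\Psi$-points via the $\Z^d$-mass-transport principle over the fundamental domain $[0,1)^d$, where $\Phi_u([0,1)^d)=1$ and $\BE\Psi([0,1)^d)=\alpha$. By the covariance property \eqref{eshiftcov} the three events $\{\Phi^\tau_u=\Phi_u,\Psi^\tau_u=\Psi\}$, $\{\Phi^\tau_u=\Phi_u,\Psi^\tau_u\ne\Psi\}$ and $\{\Phi^\tau_u\ne\Phi_u,\Psi^\tau_u=\Psi\}$ are invariant under integer shifts, and by Lemma \ref{as-matching-of-points} they exhaust the probability space; the lemma applies because, by conditioning the hypothesis of the theorem, for $\BP_U$-a.e.\ $u$ the pair $(\Phi_u,\Psi)$ almost surely contains no infinite descending chain. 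Hence exactly one of the three events has probability one. If $\Phi^\tau_u\ne\Phi_u$ almost surely, then the set of unmatched points of $\Phi_u$ is a nonempty $\Z^d$-stationary point process and therefore has positive intensity, forcing $\BE\Phi^\tau_u([0,1)^d)<1\le\alpha=\BE\Psi^\tau_u([0,1)^d)$ in the third event, which contradicts the intensity identity; this rules out the third case and yields $\BP(\Phi^\tau_u=\Phi_u)=1$. When $\alpha=1$ the same identity rules out the second event as well, leaving $\BP(\Psi^\tau_u=\Psi)=1$.

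Finally I would integrate over $u$: since $\BP(\Phi^\tau_u=\Phi_u)=1$ holds for $\BP_U$-a.e.\ $u$, independence of $U$ and $\Psi$ gives $\BP(\Phi^\tau=\Phi)=\int\BP(\Phi^\tau_u=\Phi_u)\,\BP_U(\mathrm{d}u)=1$, and likewise $\BP(\Psi^\tau=\Psi)=1$ when $\alpha=1$. The main obstacle is exactly the loss of joint ergodicity caused by the random shift $U$; the conditioning argument restores $\Z^d$-ergodicity of the conditioned pair, after which the intensity-based dichotomy of Theorem \ref{2.8} goes through unchanged.
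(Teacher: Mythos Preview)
Your argument is correct and takes a genuinely different route from the paper's. You condition on $U=u$ and exploit that $\Phi_u$ is fixed by integer shifts, so joint $\Z^d$-invariant events of $(\Phi_u,\Psi)$ are $\Z^d$-invariant events of $\Psi$ alone; the assumed $\Z^d$-ergodicity of $\Psi$ then lets you rerun the dichotomy of Theorem~\ref{2.8} verbatim with $\Z^d$ in place of $\R^d$. The paper instead first treats the special case where $U$ is uniform on $[0,1)^d$: then $\Phi$ is fully stationary, and a short computation shows that $(\Phi,\Psi)$ is jointly $\R^d$-ergodic, so Theorem~\ref{2.8} applies as a black box. To pass to arbitrary $U$, the paper observes (via covariance \eqref{eshiftcov} and stationarity of $\Psi$) that $\BP(\Phi^\tau_u=\Phi_u)$ is independent of $u$; since its average over $u\in[0,1)^d$ equals $1$ by the uniform case, it equals $1$ for every $u$, hence for any distribution of $U$.

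Your approach is more direct and avoids the detour through uniform $U$, at the cost of re-proving Theorem~\ref{2.8} in the $\Z^d$-setting (which, as you note, is essentially the same proof). The paper's approach reuses Theorem~\ref{2.8} unchanged and trades the re-proof for the symmetry observation that the relevant probability does not depend on $u$; this yields the marginally stronger conclusion that $\BP(\Phi^\tau_u=\Phi_u)=1$ for \emph{every} $u\in[0,1)^d$, not just for $\BP_U$-almost every $u$ (though for the theorem as stated your conclusion suffices).
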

{\em Proof:}  Assume first that $U$ is uniformly
distributed, so that $\Phi$ is stationary. Let $A\subset\mathbf{N}\times\mathbf{N}$ be
measurable and invariant under (diagonal) translations.
Then
\begin{align*}
\BP((\Phi,\Psi)\in A)=\BP((\Z^d+U,\Psi)\in A)
=\BP((\Z^d,\Psi-U)\in A)=\BP((\Z^d,\Psi)\in A),
\end{align*}
where we have used the assumed independence and stationarity of $\Psi$.
Since the event $\{(\Z^d,\Psi)\in A\}$ is invariant under
(diagonal) translations
of $\Psi$ by elements of $\Z^d$, we obtain that 
$\BP((\Z^d,\Psi)\in A)\in\{0,1\}$ and hence the joint ergodicity
of $(\Phi,\Psi)$. Moreover, 
$(\Phi,\Psi)$ does almost surely not contain an infinite descending chain,
so that Theorem \ref{2.8} applies.

Let $u\in[0,1)^d$.
By the covariance property \eqref{eshiftcov}, we have
$\Phi^\tau_u:=(\Phi_u)^\tau\overset{d}{=}\Phi^\tau_0+u$ and $\Psi^\tau_u\overset{d}{=}\Psi^\tau_0+u$.
Hence $\BP(\Phi^\tau_u=\Phi_u)$ does not depend on $u$.
By the first step of the proof we have
$\int_{[0,1)^d}\BP(\Phi^\tau_u=\Phi_u)\,du=1$, and therefore
$\BP(\Phi^\tau_u=\Phi_u)=1$ for all $u\in[0,1)^d$.
If $\alpha=1$, then $\int_{[0,1)^d}\BP(\Psi^\tau_u=\Psi_u)\,du=1$, and therefore
$\BP(\Psi^\tau_u=\Psi_u)=1$ for all $u\in[0,1)^d$.
This implies the assertion for general $U$.\qed

\begin{remark}\label{r3.3}\rm
It is easy to see that a \textit{mixing} point process $\Psi$ has the ergodicity property
assumed in Theorem \ref{2.11}.
Our main example, the stationary determinantal point processes are \textit{mixing};
see Theorem 7 in \cite{Soshnikov00} and also Theorem \ref{t:dpp_mixing} in the 
appendix for more quantitative bounds.
\end{remark}

\begin{theorem}\label{th_no_inf_chains}
Assume that $\Phi$ is a randomized lattice,
$\Psi$ is a stationary point process,
and that for each $n\in\N$, the $n$-th correlation function
of $\Psi$ exists and is bounded by $n^{\theta n}c^n$ for some $c>0$ and $\theta \in [0,1)$.
Then $(\Phi,\Psi)$ does almost surely not contain an
infinite descending chain.
\end{theorem}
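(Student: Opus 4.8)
The plan is to prove the absence of infinite descending chains by a first-moment estimate on the number of \emph{long} chains, after two reductions that localise the problem. Throughout, recall that $(z_1,z_2,\dots)$ is an infinite descending chain iff $z_i\in\Phi$ for odd $i$, $z_i\in\Psi$ for even $i$, and $z_i$ prefers $z_{i+1}$ to $z_{i-1}$; since ``prefers'' compares distances, this last condition is exactly $d_i\le d_{i-1}$, where $d_i:=\|z_i-z_{i+1}\|$. Thus the jumps are non-increasing. First I would use $\Z^d$-covariance: because $\Phi=\Z^d+U$ is invariant under $\Z^d$-shifts and $\Psi$ is stationary and independent of $U$, the probability of an infinite chain starting at the lattice point $U+m$ is the same for every $m\in\Z^d$; as every chain starts at a lattice point, it suffices to show $\BP(\exists\text{ infinite chain with }z_1=U)=0$. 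Next, since all jumps are bounded by $d_1<\infty$, setting $R:=\lceil d_1\rceil\in\N$ shows that the event $\{\exists\text{ infinite chain with }z_1=U\}$ is the increasing union over $R\in\N$ of $\{\exists\text{ infinite chain with }z_1=U,\ d_1\le R\}$, so it is enough to prove the latter has probability $0$ for each fixed $R$.

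Fix $R$ and let $M^R_n$ be the number of descending chains $(z_1,\dots,z_n)$ with $z_1=U$ and $d_1\le R$. A length-$(n{+}1)$ such chain restricts to a length-$n$ one, so $\{M^R_n\ge 1\}$ decreases in $n$ and
\[
\BP(\exists\text{ infinite chain with }z_1=U,\ d_1\le R)\ \le\ \lim_{n\to\infty}\BP(M^R_n\ge 1)\ \le\ \lim_{n\to\infty}\BE[M^R_n].
\]
It remains to prove $\BE[M^R_n]\to 0$. Write $p:=\lfloor n/2\rfloor$ for the number of points of $\Psi$ (the even indices); the odd-indexed points lie on the deterministic lattice $\Z^d+U$. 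Using that descending chains consist of pairwise distinct points (as in \cite{HPPS09}), the multivariate Mecke/Campbell formula expresses $\BE[M^R_n]$ as the $U$-average of a sum over the lattice points (odd indices $\ge 3$) of an integral of the $p$-th correlation function of $\Psi$ against the indicator of the constraints $d_1\le R$ and $d_i\le d_{i-1}$. Bounding the correlation function by $p^{\theta p}c^p$ and factoring it out leaves a purely geometric quantity $V^R_n$, and $\BE[M^R_n]\le p^{\theta p}c^p\,V^R_n$.

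The crux is extracting a factorial saving from $V^R_n$. The non-increasing constraint confines each $z_{i+1}$ to the ball $B(z_i,d_{i-1}):=\{y:\|y-z_i\|\le d_{i-1}\}$ of radius equal to the previous jump. Pairing each lattice sum with the preceding $\Psi$-integral and bounding the lattice sum by the number $C_d(R^d+1)=:C_R$ of lattice points in a ball of radius $\le R$, I would set up a recursion over the $p$ ``$\Psi$-steps'': placing one more $\Psi$-point within radius $\rho$ contributes an integral $\int_{\|v\|\le\rho}(\cdots)\,dv$ that reduces the radius to $\|v\|$ for the next step. With $K_0\equiv 1$ and $\kappa_d:=\lambda_d(B(0,1))$, the recursion $K_m(\rho)\le C_R\int_{\|v\|\le\rho}K_{m-1}(\|v\|)\,dv$ solves to $K_m(\rho)\le (C_R\kappa_d\rho^d)^m/m!$. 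The monotone decrease of the radii is precisely what produces the $1/m!$, giving $V^R_n\le (C_R\kappa_d R^d)^p/p!$ uniformly in $U$.

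Combining the two bounds and writing $A:=c\,C_R\kappa_d R^d$, I get $\BE[M^R_n]\le p^{\theta p}A^p/p!\le (Ae)^p\,p^{(\theta-1)p}\to 0$, using $p!\ge (p/e)^p$ and $p\to\infty$. This is exactly where $\theta\in[0,1)$ enters (and it is sharp for this method: $\theta=1$ would leave a purely geometric term with bounded base). I expect the main obstacle to be conceptual rather than computational: the naive expected number of chains of \emph{any} length is infinite, since already the second point ranges freely over all of $\Psi$, so chains cannot be counted directly. The two devices that rescue the argument are the localisation $d_1\le R$ (legitimate only because the jumps are non-increasing), which makes the first moment finite, and the decreasing-radius recursion, which supplies the crucial $1/p!$ that defeats the $p^{\theta p}$ growth of the correlation-function bound. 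A secondary point to verify carefully is the distinctness of the points of a descending chain, needed to justify the use of the $p$-th correlation function.
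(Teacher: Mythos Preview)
Your proposal is correct and follows essentially the same route as the paper, which packages the first-moment bound as Lemma~\ref{descending_chains}: localise to chains starting at a fixed lattice point with bounded first jump, bound the $p$-th correlation function by $p^{\theta p}c^p$, and extract a factorial $1/p!$ from the nested-ball geometry of the non-increasing jumps. The only cosmetic difference is that the paper obtains the $1/n!$ via a change of variables reducing the geometric integral to the ordered-volume integral $\int \I\{b \ge \|y_1\| \ge \cdots \ge \|y_n\|\}\,dy = (\kappa_d b^d)^n/n!$, whereas you reach the same factor through the recursion $K_m(\rho) \le C_R \int_{\|v\| \le \rho} K_{m-1}(\|v\|)\,dv$.
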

{\em Proof:} The proof is a direct consequence of the following lemma, which will also be needed later.
\qed

\bigskip
We denote by $\kappa_d$ the volume of the unit ball in $\R^d$.

\begin{lemma}\label{descending_chains}
Let the assumptions of Theorem~\ref{th_no_inf_chains} be satisfied.
Let $q_0\in\Z^d$, $n\in\N$ and $b>0$. Let $A_{n,b}$ be the event
that there is a descending chain $(q_0+U,x_1,q_1,\ldots,x_n,q_n)$
in $(\Phi,\Psi)$ such that $\|x_1-q_0-U\|\le b$. Then
$\BP(A_{n,b})\le a^n_n/n!$, where $a_n:=n^{\theta}c\kappa_d^2(b^2+2b\sqrt{d})^d$.
\end{lemma}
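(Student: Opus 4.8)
The plan is to bound $\BP(A_{n,b})$ by the expected number of descending chains of the prescribed type (a first–moment bound) and then to evaluate that expectation by separating the contribution of the lattice $\Phi$ from that of $\Psi$. Since $U$ is independent of $\Psi$, I would condition on $U=u$, so that $\Phi=\Z^d+u$ becomes deterministic and the chain's $\Phi$-points $q_1,\dots,q_n$ are summed over the lattice (with $q_0+u$ fixed), while its $\Psi$-points $x_1,\dots,x_n$ are distinct points of $\Psi$. Writing this expected number out and using the defining property of the $n$-th correlation function,
\[
\BE\sum_{x_1,\dots,x_n\in\Psi}^{\neq} f(x_1,\dots,x_n)=\int f(y_1,\dots,y_n)\,\rho_n(y_1,\dots,y_n)\,\md y,
\]
the $\Psi$-sum turns into an $n$-fold Lebesgue integral; the hypothesis $\rho_n\le n^{\theta n}c^n$ lets me extract the factor $n^{\theta n}c^n$ and reduces the problem to a purely geometric integral summed over lattice tuples, with a bound that will be independent of $u$ and $q_0$ and hence will hold unconditionally.

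The next step is to read off the geometry forced by a descending chain. Because $z_i$ prefers $z_{i+1}$ to $z_{i-1}$, the successive edge lengths along $(q_0+u,x_1,q_1,\dots,x_n,q_n)$ are non-increasing, and all are bounded by $b$ once $\|x_1-q_0-u\|\le b$ is imposed. In particular the matching distances $t_i:=\|x_i-q_i\|$ satisfy $b\ge t_1\ge\cdots\ge t_n$, each point of the chain lies in the ball of radius equal to the previous edge length centred at its predecessor, and each $q_i$ is a lattice point within the current budget of $x_i$. I would relax the full chain indicator to exactly these consequences: $x_i$ lying in a ball of the current budget around the current lattice point, $q_i$ a lattice point within that budget of $x_i$, and the radii decreasing.

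With these constraints the integral is evaluated by a nested, inside-out recursion on the number of remaining steps. At each level I integrate one $x_i$ over a ball of radius $\rho$ (the current budget), bound the number of admissible lattice points $q_i$ by the uniform count of lattice points in a ball, namely $\kappa_d(b+\sqrt d/2)^d$, and pass the new budget $\|x_i-q_i\|\le\rho$ to the next level; monotonicity of the partial bound lets me replace this new radius by $\|x_i-q_{i-1}\|$, so that the integration ball and the radial weight share a centre. The resulting one-dimensional radial integrals $\int_0^\rho r^{(k-1)d}\,r^{d-1}\,\md r=\rho^{kd}/(kd)$ telescope, and it is precisely the accumulation of the factors $1/k$ that produces the decisive $1/n!$. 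Collecting the per-step factors $\kappa_d b^d$ (ball volume) and $\kappa_d(b+\sqrt d/2)^d$ (lattice count) together with $n^{\theta n}c^n$ yields a bound of the form $\frac1{n!}\big(n^{\theta}c\,\kappa_d^2(b^2+\tfrac12 b\sqrt d)^d\big)^n$, which is dominated by $a_n^n/n!$.

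The main obstacle is obtaining the factor $1/n!$: a naive union bound over lattice tuples together with a crude ball volume for each $x_i$ would only give $(\mathrm{const})^n$ and would not even converge as $n\to\infty$. The key is to exploit the monotonicity of the matching distances along a descending chain and to organise the computation so that this monotonicity becomes an ordered (simplex) constraint on the radial variables. The subsidiary difficulty is decoupling the lattice sums—controlled by the local count $\kappa_d(b+\sqrt d/2)^d$—from the radial integration that carries the ordering; the inside-out recursion together with the monotone replacement of centres is what accomplishes this cleanly.
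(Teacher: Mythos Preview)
Your proposal is correct, and the overall strategy matches the paper's: a first-moment bound, the hypothesis $\rho_n\le n^{\theta n}c^n$, and the monotone edge-length constraint along a descending chain to extract the simplex factor $1/n!$. The one technical difference is how the lattice sums are decoupled from the ordered integral. The paper rewrites each $x_j$-integral as $\sum_{p_j\in\Z^d}\int_{[0,1)^d}$, so that the ordering constraint becomes literally the simplex $\{b\ge\|y_1\|\ge\cdots\ge\|y_n\|\}$ (giving $(\kappa_d b^d)^n/n!$ in one line) while the $q_j$-sums are bounded separately and uniformly by $\kappa_d(b+2\sqrt d)^d$. You instead bound the lattice count $\#\big((\Z^d+u)\cap B(x_i,r)\big)\le\kappa_d(b+\sqrt d/2)^d$ directly and carry the ordering through an inside-out recursion, with the factors $1/k$ produced by the nested radial integrals $\int_0^\rho r^{kd-1}\,dr$. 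Both devices work; yours is slightly more elementary (no unit-cube substitution) and yields a marginally sharper constant $(b^2+\tfrac12 b\sqrt d)^d\le (b^2+2b\sqrt d)^d$, while the paper's trick makes the simplex integral appear explicitly without the inductive monotonicity argument.
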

{\em Proof:} To simplify the notation, we first assume that $U\equiv 0$.
By stationarity, we can further assume that $q_0=0$. 
The indicator function of $A_{n,b}$ is bounded by
\begin{align*}
\sideset{}{^{\ne}}\sum_{x_1,\ldots,x_n\in\Psi}\sum_{q_1,\ldots,q_n\in\Z^d}
\I\{b\ge \|x_1\|\ge \|q_1-x_1\|\ge\cdots\ge \|x_n-q_{n-1}\|\ge \|q_n-x_{n}\|\}. 
\end{align*}
Taking expectations and using our assumptions on $\Psi$ gives
\begin{align*}
\BP(A_{n,b})\le n^{\theta n}c^n\int\sum_{q_1,\ldots,q_n\in\Z^d}
\I\{b\ge \|x_1\|\ge \|q_1-x_1\|\ge\cdots\ge &\|x_n-q_{n-1}\|\ge \|q_n-x_{n}\|\}\\
&d(x_1,\ldots,x_n). 
\end{align*}
Letting $W:=[0,1)^d$ this can be written as
\begin{align*}
\BP(A_{n,b})\le n^{\theta n}c^n\int_{W^n}\sum_{p_1,\ldots,p_n\in\Z^d}&\sum_{q_1,\ldots,q_n\in\Z^d}
\I\{b\ge \|p_1+u_1\|\ge \|q_1-p_1-u_1\|\ge \\
\\ &\cdots\ge \|p_n+u_n-q_{n-1}\|\ge \|q_n-p_{n}-u_n\|\}\,
d(u_1,\ldots,u_n).
\end{align*}
By a change of variables the right-hand side equals
\begin{align*}
n^{\theta n}c^n\int_{W^n}\sum_{p_1,\ldots,p_n\in\Z^d}\sum_{q_1,\ldots,q_n\in\Z^d}
\I\{b\ge &\|p_1+u_1\|\ge \|q_1-u_1\|\ge \\
&\cdots\ge \|p_n+u_n\|\ge \|q_n-u_n\|\}d(u_1,\ldots,u_n).
\end{align*}
Since $\|q_j\| \le \|q_j-u_j\|+\sqrt{d}$ for $j\in\{1,\ldots,n\}$,
this can be bounded by 
\begin{align*}
n^{\theta n}c^n(\kappa_d(b+2\sqrt{d})^d)^n&\int_{W^n}\sum_{p_1,\ldots,p_n\in\Z^d}
\I\{b\ge \|p_1+u_1\|\ge\cdots\ge \|p_n+u_n\|\}\,d(u_1,\ldots,u_n)\\
&=n^{\theta n}c^n(\kappa_d(b+2\sqrt{d})^d)^n\int \I\{b\ge \|y_1\|\ge\cdots\ge \|y_n\|\}\,d(y_1,\ldots,y_n)\\
                  &=\frac{n^{\theta  n}c^n(\kappa_d(b+2\sqrt{d})^d)^n(\kappa_db^d)^n}{n!}.
\end{align*}

In the general case, we can assume that $q_0=U$. We then need
to replace $x_1$ by $x_1-u$, and $q_j$ by $q_j+u$ and integrate $u$ with respect
to the distribution of $U$. The details are left to the reader.\qed

\bigskip
Note that, we have actually proved a bound on the expected number of
descending chains.
The following lemma can be proved as the previous one.

\begin{lemma}\label{descending_chains2}
Let the assumptions of Theorem~\ref{th_no_inf_chains} be satisfied.
Let $x\in\R^d$, $n\in\N$ and $b>0$. Let $A_{n,b}$ be the event
that there is a descending chain $(x,q_1,x_1,\ldots,q_n,x_n)$
in $(\Psi\cup\{x\},\Phi)$ such that $\|q_1-x\|\le b$. Then
the assertion of Lemma \ref{descending_chains} holds.
\end{lemma}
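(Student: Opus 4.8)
The plan is to mirror the proof of Lemma~\ref{descending_chains}, the only structural change being that the chain is now anchored at the added point $x$ (which plays a $\Psi$-role) rather than at a lattice point, so that the alternation starts with a $\Phi$-point. Writing $\Phi=\Z^d+U$, I would first bound the indicator of $A_{n,b}$ by
\begin{align*}
\sideset{}{^{\ne}}\sum_{x_1,\ldots,x_n\in\Psi}\sum_{q_1,\ldots,q_n\in\Phi}
\I\{b\ge\|q_1-x\|\ge\|x_1-q_1\|\ge\|q_2-x_1\|\ge\cdots\ge\|q_n-x_{n-1}\|\ge\|x_n-q_n\|\}.
\end{align*}
Just as before there are exactly $n$ points $x_1,\ldots,x_n$ from $\Psi$ and $n$ points $q_1,\ldots,q_n$ from $\Phi$, while $x$ is a fixed (deterministic) point.

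Next I would condition on $U=u$, so that $\Phi=\Z^d+u$ is deterministic and, by independence, $\Psi$ keeps its law. Taking expectations and bounding the $n$-th correlation function of the $n$ random points $x_1,\ldots,x_n$ by $n^{\theta n}c^n$ (the anchor $x$, being deterministic, contributes no correlation factor) gives
\begin{align*}
\BP(A_{n,b}\mid U=u)\le n^{\theta n}c^n\int\sum_{q_1,\ldots,q_n\in\Z^d+u}
\I\{b\ge\|q_1-x\|\ge\cdots\ge\|x_n-q_n\|\}\,d(x_1,\ldots,x_n).
\end{align*}
Writing $x_j=m_j+v_j$ with $m_j\in\Z^d$ and $v_j\in W:=[0,1)^d$, I would then apply the same telescoping change of variables as in Lemma~\ref{descending_chains}: a cascade of integer shifts of the summation variables $m_j$ and $q_j$ turns the coupled nested distances into decoupled consecutive increments $w_1,\ldots,w_{2n}$ with $b\ge\|w_1\|\ge\cdots\ge\|w_{2n}\|$, the odd ones being of $\Phi$-type (starting with $q_1-x$) and the even ones of $\Psi$-type.

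Finally I would estimate the two groups separately, exactly as before. Each of the $n$ $\Phi$-type increments has norm at most $b$, so its lattice sum runs over at most $\kappa_d(b+2\sqrt d)^d$ points (the continuous offset produced by $x$, $u$ and the $v_j$ only recenters the relevant ball and does not affect this count). Discarding the $\Phi$-type distances from the nested chain leaves the ordering $b\ge\|y_1\|\ge\cdots\ge\|y_n\|$ on the $n$ continuous $\Psi$-type increments $y_j$, of total volume $(\kappa_db^d)^n/n!$. Multiplying the three factors yields
\begin{align*}
\BP(A_{n,b}\mid U=u)\le\frac{n^{\theta n}c^n\,(\kappa_d(b+2\sqrt d)^d)^n\,(\kappa_db^d)^n}{n!}=\frac{a_n^n}{n!}
\end{align*}
uniformly in $u$, and integrating over the law of $U$ gives the claim. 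The one place where the argument genuinely differs from Lemma~\ref{descending_chains}, and hence the main point to verify, is that reversing the parity of the alternation (anchor in $\Psi\cup\{x\}$, first free point in $\Phi$) still leaves precisely $n$ continuous increments to supply the simplex volume $1/n!$ and $n$ discrete increments to supply the lattice counts; since $x$ is a deterministic continuous point, it merely shifts the center of the first lattice ball and leaves every geometric estimate, and therefore the constant $a_n$, unchanged.
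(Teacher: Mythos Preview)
Your proposal is correct and follows exactly the approach the paper indicates: the paper's own proof of Lemma~\ref{descending_chains2} is simply the one-line remark that it ``can be proved as the previous one,'' and you have carried out precisely that adaptation, swapping the parity of the alternation and verifying that there are still $n$ continuous increments (yielding the simplex factor $(\kappa_d b^d)^n/n!$) and $n$ lattice increments (yielding the factor $(\kappa_d(b+2\sqrt d)^d)^n$). The conditioning on $U=u$ followed by integration over the law of $U$ also matches the paper's treatment of the general randomized-lattice case in Lemma~\ref{descending_chains}.
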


Due to \eqref{e:Had}, we have that stationary determinantal point
processes satisfy the conditions of Theorem \ref{th_no_inf_chains} and 
hence we have the following corollary {via Theorem \ref{2.11}.}
\begin{corollary}\label{c3.6} Assume that $\Phi$ is a randomized lattice
and that $\Psi$ is a stationary determinantal point process (in particular Poisson).
Then $(\Phi,\Psi)$ does almost surely not contain an
infinite descending chain {and thus the assertions of  Theorem \ref{2.8} hold.}
\end{corollary}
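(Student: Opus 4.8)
The plan is to check that a stationary determinantal point process $\Psi$ satisfies the correlation-function hypothesis of Theorem \ref{th_no_inf_chains}, and then to promote the resulting statement to the assertions of Theorem \ref{2.8} by feeding it into Theorem \ref{2.11}. All the real work has already been done in Lemma \ref{descending_chains}; here the determinantal structure only needs to supply two standard inputs, a growth bound on the correlation functions and ergodicity.

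First I would recall that the $n$-th correlation function of a determinantal point process with Hermitian kernel $K$ is the determinant $\rho_n(x_1,\dots,x_n)=\det[K(x_i,x_j)]_{i,j=1}^n$, and that this matrix is positive semidefinite. Hadamard's inequality, which is exactly the content of \eqref{e:Had}, then gives $\rho_n(x_1,\dots,x_n)\le\prod_{i=1}^n K(x_i,x_i)$. By stationarity the diagonal $K(x,x)$ is constant and equal to the intensity $\alpha$, so $\rho_n\le\alpha^n$ for every $n\in\N$. This is precisely the hypothesis of Theorem \ref{th_no_inf_chains} with $\theta=0$ and $c=\alpha$; in particular the super-exponential factor $n^{\theta n}$ is trivial here, so the bound sits comfortably inside the admissible regime $\theta\in[0,1)$. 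Theorem \ref{th_no_inf_chains} then immediately yields that $(\Phi,\Psi)$ almost surely contains no infinite descending chain, which is the first assertion.

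It remains to obtain the assertions of Theorem \ref{2.8}, and for this I would invoke Theorem \ref{2.11}. Its hypotheses, beyond the absence of infinite descending chains just established, are that $\Phi=\Z^d+U$ be a randomized lattice (true by assumption, with $U$ independent of $\Psi$) and that $\Psi$ be ergodic under $\Z^d$-translations. The ergodicity is supplied by the fact, recorded in Remark \ref{r3.3} and going back to Soshnikov (see also Theorem \ref{t:dpp_mixing}), that a stationary determinantal point process is mixing under all translations; restricting to the subgroup $\Z^d$ preserves mixing, and mixing implies ergodicity. Applying Theorem \ref{2.11} then transfers the conclusions of Theorem \ref{2.8} to the determinantal setting. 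I do not expect a genuine obstacle: the only substantive checks are the Hadamard bound on $\rho_n$ and the mixing of determinantal processes, both of which are off-the-shelf, and the single point deserving a moment's care is merely confirming that $\alpha^n$ indeed corresponds to $\theta=0$ rather than to a forbidden growth rate.
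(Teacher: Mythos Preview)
Your proposal is correct and follows essentially the same route as the paper: invoke the Hadamard bound \eqref{e:Had} to verify the hypothesis of Theorem~\ref{th_no_inf_chains} (with $\theta=0$, $c=\alpha$), and then pass to the conclusions of Theorem~\ref{2.8} via Theorem~\ref{2.11}. The paper's proof is the one-line remark preceding the corollary; your version simply spells out the ergodicity check (via Remark~\ref{r3.3}) that the paper leaves implicit.
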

\begin{remark}\label{DPP_no_descending_chains}\rm
  There are many more examples of point processes satisfying the
  assumptions of Theorem~\ref{th_no_inf_chains} including
  $\alpha$-determinantal point processes with `fast decaying' kernels
  for $\alpha = -1/m, m \in \N$, rarefied Gibbsian point processes, many
  Cox point processes etc. See (1.12) and Sections 2.2.2 and 2.2.3
  in~\cite{BYY18} for these examples and also see Sections~4 and 7
  in~\cite{Daley05}. 
\end{remark}

\begin{remark}
  Note that the absence of infinite descending chains in
  Theorem~\ref{th_no_inf_chains} is independent of the intensity
  $\alpha$ of $\Psi$ and that there is almost surely no infinite
  descending chain in $(\Psi,\Phi)$ as well as in $(\Phi,\Psi)$.  Even
  for an unmatched point, all descending chains are almost surely finite
  (but there are infinitely many).
\end{remark}

\section{Matching flower}\label{sec:matching_flower}

Here we define the \textit{matching flower} $F(\Phi,\Psi,z)$ of a point
$z\in\Phi\cup\Psi$, which is a random set that determines the matching
partner of $z$.
Importantly, it is a stopping set.
We can define this flower by searching the matching partner of $z$
through iterating over all potential matching partners, competitors and
their matching neighbors.

\begin{figure}[t]
  \centering
  \includegraphics[width=\textwidth]{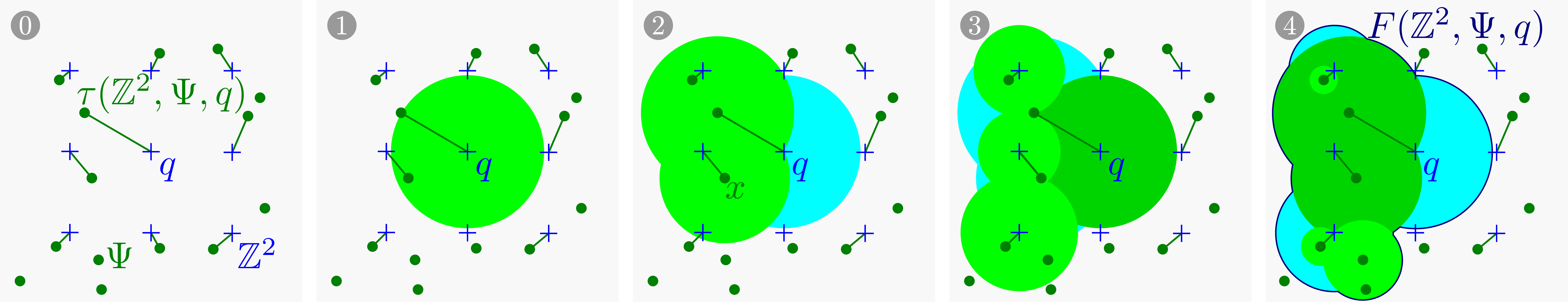}
  \caption{The construction of a matching flower,
    (0)~starts with the matching $\tau$ between $\Z^2$ and $\Psi$.
    (1)~A ball grows at $q$ until it touches the matching partner $\tau(\Z^2,\Psi,q)$ in $\Psi$.
    (2)~Two new balls touching $q$ are centered at $\tau(\Z^2,\Psi,q)$ and $x$ (which is preferred by $q$).
    (3)~New balls grow around the lattice points inside the balls at $\tau(\Z^2,\Psi,q)$ and $x$.
    (4)~To points of $\Psi$ inside these balls, new balls are assigned that touch their
    lattice neighbors.
    Since no new lattice points are accessed, the algorithm terminates.
    The matching flower $F(\Z^2,\Psi,q)$ is the union of all balls from
    (1)--(4).}
  \label{fig:matching-flower}
\end{figure}

Figure~\ref{fig:matching-flower} illustrates an intuitive algorithm to
construct the matching flower {for the example} of a lattice point
$q\in\Z^d$.
First a ball is centered at $q$ that touches its matching partner $\tau(\Z^d,\Psi,q)$. The second step determines which points in $\Psi$ are preferred or equal to $\tau(\Z^d,\Psi,q)$ (by $q$), which have to be inside the ball from
the first step or touch it,
then to each of these points a new ball is assigned that touches $q$.
The third step determines which lattice points are preferred or equal to $q$ (by
these points in $\Psi$), then to each one of them a ball is assigned
that touches the corresponding point in $\Psi$.
Finally, the second and third steps are iterated (considering each new
lattice point instead of $q$) until no new points can be found in any of
the balls.
An alternative representation of the matching flower is via an ordered
\textit{graph of preference}.
We do not formally define it but refer to Fig.~\ref{fig:matching-flower-graph} for an illustration.

If there are no infinite descending chains in $(\Phi,\Psi)$ and if there
is a matching partner $\tau(\Phi,\Psi,z)\neq\infty$ of $z$, then the
matching flower $F(\Phi,\Psi,z)$ is bounded.
This is because $\Psi$ and $\Phi$ are locally finite and thus there
are only finitely many chains whose second element is closer to $z$ than
$\tau(\Phi,\Psi,z)$.
Because of the absence of infinite descending chains, the iterative
search for nearest neighbors terminates in a pair of mutually nearest
neighbors after a finite number of steps.
Note that $F(\Phi,\Psi,q)$ does not only determine the matching
status and partner of $q\in\Phi$ but does so for all points of $\Phi$
that are in the matching flower.

For a rigorous mathematical analysis, we need a more explicit definition
of the matching flower {and we do so now by defining the matching flower on two locally finite sets}. Let $\varphi,\psi\in\mathbf{N}$ and $z\in\varphi\cup\psi$. If
$\tau(\varphi,\psi,z)=\infty$ we set
$F(\varphi,\psi,z):=\R^d$.
Assume now that $\tau(\varphi,\psi,z)\ne \infty$.
We call a descending chain
$c=(z_1,\ldots,z_n)$ in $(\varphi,\psi$) a {\em competing chain}
in $(\varphi,\psi$) for $z$ if $z_1=z$ and 
$z_2$ is equal or preferred to $\tau(\varphi,\psi,z_1)$.
In this case, we define
\begin{align*}
  F_c:=B(z_1,\|z_1-z_2\|)\cup \bigcup^{n}_{i=2}
  B(z_i,\|z_i-z_{i-1}\|){,}
\end{align*}
{where $B(z,r)$ denotes a ball with center $z$ and radius $r$.}
Then the matching flower of $z$ (w.r.t.\ $(\varphi,\psi)$) is defined
by $F(\varphi,\psi,z):=\bigcup F_c$,
where the union is over all competing chains $c$ for $z$.
This definition is similar to (2.1) in \cite{LastPenrose2013}.  
Just for completeness we define $F(\psi,\varphi,z):=\{z\}$
for $z\notin\varphi\cup\psi$.
\begin{remark}
  If $(\varphi,\psi)$ is non-equidistant, then $F(\varphi,\psi,\cdot)=F(\psi,\varphi,\cdot)$.
\end{remark}

For $z\in\R^d$ and $\varphi\in\mathbf{N}$, we define $\varphi^z:=\varphi\cup\{z\}$.
Although the definition of the matching flower involves the global matching,
the mapping $(\varphi,\psi)\mapsto F(\varphi^z,\psi,z)$
has the following useful {\em stopping set} property.
For a set $W\subset\R^d$ we abbreviate $\varphi_W:=\varphi\cap W$
and $\varphi^z_W:=(\varphi^z)_W$.
For $k\in\N$ the {\em $k$-th nearest neighbour} of $z\in\R^d$ in $\varphi$
is defined as the point $N_k(\varphi,z)$ such that the number of points in
$\varphi$ that $z$ prefers to $N_k(\varphi,z)$ is exactly equal to $k-1$.

\begin{lemma}\label{matching-flower-is-stopping-set}
Let $W\subset\R^d$ with $z\in W$
and let $(\varphi,\psi)\in\mathbf{N}\times\mathbf{N}$.
Then $F(\varphi^z,\psi,z)\subset W$ iff
$F(\varphi^z_W,\psi_W,z)\subset W$. In this case
$\tau(\varphi^z,\psi,z)=\tau(\varphi^z_W,\psi_W,z)$.
The corresponding statement also holds for $\psi^z$.
\end{lemma}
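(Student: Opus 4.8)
The plan is to reduce the entire statement to a single \emph{insensitivity} property: changing the configuration outside the flower leaves both the matching partner and the flower of $z$ unchanged. Concretely, I would isolate the following claim. Let $z\in W$ and suppose $(\varphi_1,\psi_1),(\varphi_2,\psi_2)\in\mathbf{N}\times\mathbf{N}$ agree on $W$, i.e.\ $\varphi_1\cap W=\varphi_2\cap W$ and $\psi_1\cap W=\psi_2\cap W$, with $z\in\varphi_1\cap W$. If $F(\varphi_1,\psi_1,z)\subseteq W$, then $F(\varphi_2,\psi_2,z)=F(\varphi_1,\psi_1,z)$ and $\tau(\varphi_1,\psi_1,z)=\tau(\varphi_2,\psi_2,z)$. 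Granting this, the lemma follows immediately: since $\varphi^z$ and $\varphi^z_W$ (resp.\ $\psi$ and $\psi_W$) agree on $W$, applying the claim with base configuration $(\varphi^z,\psi)$ gives the forward implication together with $F(\varphi^z_W,\psi_W,z)=F(\varphi^z,\psi,z)\subseteq W$ and equality of matching partners, while applying it with base configuration $(\varphi^z_W,\psi_W)$ gives the reverse implication. The statement for $\psi^z$ follows from the symmetric role of $\varphi$ and $\psi$ in the construction.

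To prove the insensitivity claim, note first that we may assume $\tau(\varphi_1,\psi_1,z)\neq\infty$: otherwise $F(\varphi_1,\psi_1,z)=\R^d$, so $F\subseteq W$ forces $W=\R^d$ and the statement is trivial. The key structural observation is that every competing chain for $z$ lies inside its own flower, and hence inside $F\subseteq W$: indeed $z_1=z$ is the centre of $B(z_1,\|z_1-z_2\|)$ and $z_2$ lies on its boundary, while for $i\ge 2$ the point $z_i$ is the centre of $B(z_i,\|z_i-z_{i-1}\|)$, so all chain points belong to $F_c\subseteq F$. Consequently every competing chain for $z$ consists of points in $W$ and, since the two configurations agree on $W$, the collections of competing chains will coincide once we know that the global matching partners entering their definition agree along these chains.

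The heart of the argument is therefore to show that no point lying outside $F$ ever influences the matching cascade of $z$; this I would establish by induction on the rounds of the mutual nearest neighbour algorithm. At each round one tracks the matched pairs that are \emph{relevant} to $z$, namely those reachable from $z$ by a competing-chain prefix, and verifies that they stay inside $F$. The inductive step is an extension argument: if at some round a relevant point $z_i$, reached by a competing-chain prefix $(z,\dots,z_i)$, preferred a point $y\notin W$ to its predecessor $z_{i-1}$, then $(z,\dots,z_i,y)$ would again be a competing chain, whence $y$ would be the centre of the ball $B(y,\|y-z_i\|)\subseteq F_c\subseteq F\subseteq W$, contradicting $y\notin W$. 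Thus throughout the cascade all nearest-neighbour comparisons relevant to $z$ involve only points of $F\subseteq W$, on which the two configurations are identical; hence the algorithm matches $z$ to the same partner at the same round and produces the same competing chains, so the flowers agree.

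The main obstacle is the inductive bookkeeping in the last step: one must define precisely what it means for a matched pair or a candidate neighbour to be relevant to $z$ at a given round, and verify that the set of relevant points is exactly the set traced out by competing-chain prefixes, so that the extension argument applies and external points are provably never selected as nearest neighbours. Care is also needed with the lexicographic tie-breaking in the preference relation, to ensure that ``equal or preferred'' comparisons extend descending chains correctly, and with checking that the matched/unmatched \emph{status} of $z$ -- not merely the identity of a finite partner -- is preserved, since this is what makes the stated equivalence consistent in the degenerate case $\tau(\varphi_1,\psi_1,z)=\infty$.
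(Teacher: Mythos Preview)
Your insensitivity formulation is a nice packaging: once you have the asymmetric claim (flower contained in $W$ for one configuration forces agreement of flower and partner for any configuration agreeing on $W$), both implications of the lemma drop out by taking first $(\varphi^z,\psi)$ and then $(\varphi^z_W,\psi_W)$ as the base. The paper instead proves the two implications separately and directly, without isolating such a claim; what you gain is conceptual cleanliness, what you lose is that the single claim is harder to prove than either implication alone.

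There is, however, a genuine imprecision in your extension step. You write that if a relevant point $z_i$ prefers some $y\notin W$ to $z_{i-1}$, then $(z,\dots,z_i,y)$ ``would again be a competing chain, whence $y$ would be the centre of $B(y,\|y-z_i\|)\subseteq F$''. But competing chains and the flower $F$ are defined with respect to the base configuration $(\varphi_1,\psi_1)$, and $y\notin W$ need not belong to $\varphi_1\cup\psi_1$ at all, so $(z,\dots,z_i,y)$ need not be a chain in that configuration. The correct argument is simpler and does not require extending the chain: since $(z,\dots,z_i)$ is already a competing chain in $(\varphi_1,\psi_1)$, the ball $B(z_i,\|z_i-z_{i-1}\|)$ is already part of $F_c\subseteq F\subseteq W$, and $y$ lies in this ball because $z_i$ prefers $y$ to $z_{i-1}$. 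So $y\in W$ after all.

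The paper sidesteps this circularity by working with \emph{descending} chains rather than competing chains. A descending chain is defined purely by the preference relation, independently of $\tau$, so one can compare the families of descending chains in the two configurations without first knowing that the matching partners agree. Concretely, the paper fixes $k$ with $\tau(\varphi^z,\psi,z)=N_k(\psi,z)$, shows that every descending chain starting at $z$ with second element among $N_1(\psi,z),\dots,N_k(\psi,z)$ lies in $W$ (hence is a descending chain for the restricted configuration too), and then invokes the matching algorithm once to conclude equality of $\tau(z)$. This avoids the round-by-round induction you outline; your induction can be made to work, but (as you yourself note) the bookkeeping of which matched pairs are ``relevant'' at each round is delicate, and you would still need to argue that an irrelevant point outside $W$ cannot become the mutual nearest neighbour of a relevant point---which again reduces to the ball-containment observation above.
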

{\em Proof:} We can assume that $z\notin \psi$. (Otherwise the
assertion is trivial.)

Assume that $F(\varphi^z,\psi,z)\subset W$ and
$\tau(\varphi^z,\psi,z)\neq\infty$.
Then there exists $k\in\N$ such that
$\tau(\varphi^z,\psi,z)=N_k(\psi,z)$.
Let $c$ be a descending chain in $(\varphi^z,\psi)$ starting in $z_1=z$
and $q_1\in \{N_1(\psi,z),\ldots,N_{k}(\psi,z)\}$.
Since $F(\varphi^z,\psi,z)\subset W$ we obtain that $c$ is also a
descending chain in $(\varphi^z_W,\psi_W)$.
This shows that $N_j(\psi,z)=N_j(\psi_W,z)$ for each $j\in\{1,\ldots,k\}$. 
Moreover, applying the matching algorithm from Section \ref{sdefinition}
to both $(\varphi^z,\psi)$ and $(\varphi^z_W,\psi_W)$ yields that
$\tau(\varphi^z,\psi,z)=\tau(\varphi^z_W,\psi_W,z)$.
But then any competing chain in $(\varphi^z_W,\psi_W)$ for $z$
is also a competing chain in $(\varphi^z,\psi)$ for $z$.
It follows that $F(\varphi^z_W,\psi_W,z)\subset W$.

\begin{figure}[t]
  \centering
  \includegraphics[width=\textwidth]{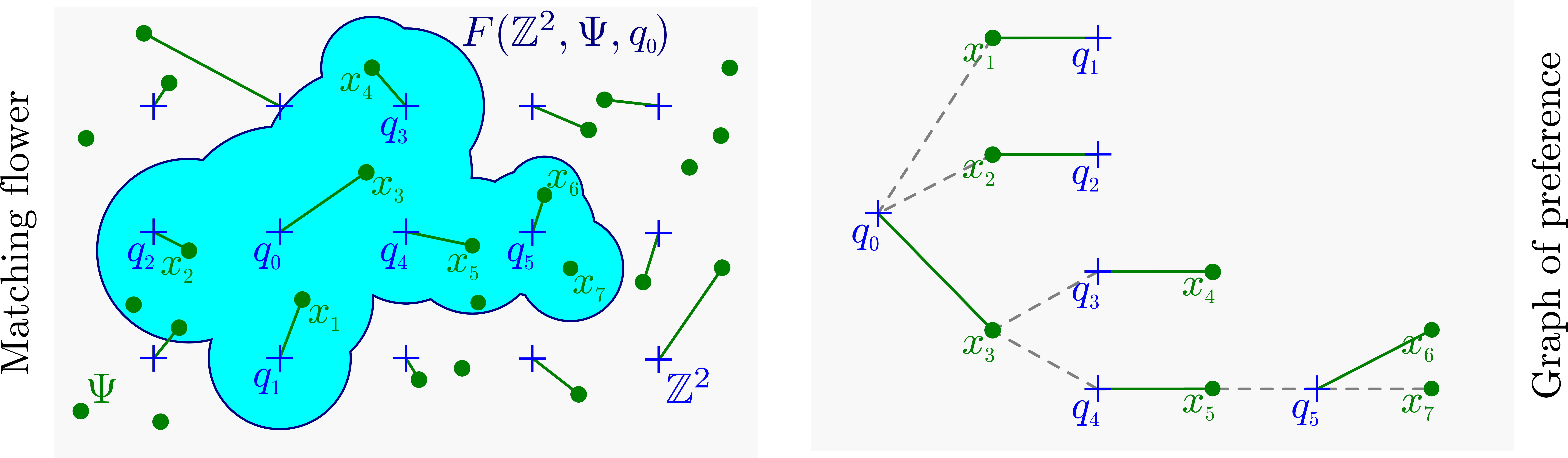}
  \caption{A matching flower $F(\mathbb{Z}^2,\Psi,q_0)$ for a lattice
  point $q_0$ (left) and its ordered graph of preference (right),
  with preferences ordered from top to bottom and from right to
  left (children are preferred to parent points).
  In general, the graph does not have to be a tree.}
  \label{fig:matching-flower-graph}
\end{figure}

Assume that $F(\varphi^z_W,\psi_W,z)\subset W$
and $\tau(\varphi^z_W,\psi_W,z)\ne\infty$.
Thus there exists $k\in\N$ such that $\tau(\varphi^z_W,\psi_W,z)=N_k(\psi_W,z)$.
Since $B(z,\|N_k(\psi_W,z)-z\|)\subset W$, we obtain that
$N_j(\psi_W,z)=N_j(\psi,z)$ for each $j\in\{1,\ldots,k\}$. 
Let $c=(z_1,\ldots,z_n)$ be a descending chain in $(\varphi^z,\psi)$
with $z_1=z$ and $z_2\in \{N_1(\psi,z),\ldots,N_{k}(\psi,z)\}$.
Since the pair $(z_1,z_2)$ is a descending chain in $(\varphi^z_W,\psi_W)$
and $F(\varphi^z_W,\psi_W,z)\subset W$ we obtain that $B(z_2,\|z_2-z_1\|)\subset W$.
Since $\|z_3-z_2\|\le \|z_2-z_1\|$ we obtain that $z_3\in W$, so that
$(z_1,z_2,z_3)$ is a descending chain in $(\varphi^z_W,\psi_W)$.
It follows inductively that $c$ is a descending chain in
$(\varphi^z_W,\psi_W)$.
Applying the matching algorithm from Section \ref{sdefinition}
to both $(\varphi^z_W,\psi_W)$ and
$(\varphi^z,\psi)$ shows that
$\tau(\varphi^z_W,\psi_W,z)=\tau(\varphi^z,\psi,z)$ and in turn
$F(\varphi^z,\psi,z)\subset W$.

In case $\tau(\varphi^z,\psi,z)=\infty$ or $\tau(\varphi^z_W,\psi_W,z)=\infty$, then by the definition of the
matching flower $W=\R^d$ and hence the claim follows trivially. \qed

\begin{proposition}\label{finite-matching-flower}
  Suppose that $\Phi$ is a randomized lattice and that $\Psi$ is a stationary determinantal point process (in particular Poisson).
  Let $z\in\R^d$.
  Then the following implication holds almost surely:
  if $\tau(\Phi,\Psi,z)\ne\infty$ then $F(\Phi,\Psi,z)$ is bounded.
\end{proposition}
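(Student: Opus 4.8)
The case $z\notin\Phi\cup\Psi$ is trivial, since then $F(\Phi,\Psi,z)=\{z\}$ by definition, so I would assume $z\in\Phi\cup\Psi$ and, say, $z\in\Phi$ (the case $z\in\Psi$ being entirely symmetric). By Corollary \ref{c3.6}, almost surely $(\Phi,\Psi)$ contains no infinite descending chain, and I would argue on this almost sure event intersected with $\{\tau(\Phi,\Psi,z)\ne\infty\}$; on the latter I set $R:=\|z-\tau(\Phi,\Psi,z)\|<\infty$. The plan is to organise all competing chains for $z$ into a single rooted tree and then to invoke König's lemma.

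Concretely, I would let $T$ be the rooted tree whose root is the one-term sequence $(z)$, whose nodes of length $m\ge 2$ are exactly the competing chains $(z_1,\ldots,z_m)$ for $z$, and in which the parent of a node is obtained by deleting its last entry. The first step would be to check that $T$ is finitely branching. A child $(z,z_2)$ of the root has $z_2$ equal or preferred to $\tau(\Phi,\Psi,z)$, hence $\|z_2-z\|\le R$, and local finiteness of $\Psi$ leaves only finitely many such $z_2$. For a node $(z_1,\ldots,z_m)$ with $m\ge 2$, a child $(z_1,\ldots,z_m,z_{m+1})$ must satisfy $\|z_{m+1}-z_m\|\le\|z_m-z_{m-1}\|$, because $z_m$ prefers $z_{m+1}$ to $z_{m-1}$; since $z_{m+1}$ then lies within a fixed ball around $z_m$ in the locally finite set $\Phi$ or $\Psi$, there are again only finitely many choices. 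Telescoping these inequalities back to $\|z_1-z_2\|\le R$ shows in addition that every ball entering any $F_c$ has radius at most $R$.

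The second and conceptually central step would be to rule out infinite paths in $T$. An infinite path is a sequence $(z_1,z_2,\ldots)$ each of whose finite prefixes is a competing chain and hence a descending chain in $(\Phi,\Psi)$; this is exactly an infinite descending chain, which does not exist on our almost sure event. Thus $T$ has no infinite path, and since it is finitely branching, König's lemma forces $T$ to be finite. Consequently there are only finitely many competing chains for $z$, each of finite length, so $F(\Phi,\Psi,z)=\bigcup_c F_c$ is a finite union of closed balls of radius at most $R$ and is therefore bounded.

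I expect the only real care, rather than a genuine obstacle, to lie in the bookkeeping of the first step: one must match the exact indexing of the descending-chain constraints (which bind only the interior vertices $2\le i\le m-1$, while the constraint on $z_2$ comes from the competing-chain condition, and the strictness of ``prefers'' rules out immediate backtracking) so that both finite branching and the uniform radius bound $R$ are justified cleanly. Once this is in place, the reduction to the absence of infinite descending chains via Corollary \ref{c3.6} and the König's lemma step are immediate.
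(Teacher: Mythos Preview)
Your proof is correct and follows essentially the same route as the paper's own argument: reduce boundedness of $F(\Phi,\Psi,z)$ to there being only finitely many competing chains for $z$, and derive the latter from the absence of infinite descending chains (Corollary~\ref{c3.6}) together with local finiteness. The paper compresses the compactness step into one sentence (``the only way to have infinitely many of them is via an infinite descending chain''), whereas you make this precise by organising the competing chains into a finitely branching rooted tree and invoking K\"onig's lemma --- a helpful unpacking, but not a genuinely different approach.
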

{\em Proof:}
Assume that $\tau(\Phi,\Psi,z)\ne \infty$.
Then $F(\Phi,\Psi,z)$ is bounded iff there
are only finitely many competing chains for $z$.
Since there are only finitely many choices for
the second point of such a competing chain,
the only way to have infinitely many of them
is via an infinite descending chain.
However, by Corollary~\ref{c3.6}
such chains do almost surely not exist.
\qed

\section{Tail bounds for the matching distance and flower}\label{stail}

From now on, $\Psi$ shall always be a stationary point process with intensity $\alpha \geq 1$. 
Here we shall prove tail bounds for the distance to the matching partner of a lattice point 
and the matching flower of a lattice point when $\Psi$ is a
determinantal point process (see Section \ref{s:app} for definitions) and we show the same for points of $\Psi$ as
well when $\Psi$ is a Poisson point process. 

\subsection{Lattice points matched to determinantal point processes}
\label{sec:DPP}

If $\Psi$ is determinantal and $\alpha>1$, we derive
an exponential tail bound for the length of the random vector
$\tau(\Z^d,\Psi,0)$, that is for the distance of the origin from its
matching partner, when matching the lattice $\Z^d$ to $\Psi$. 
\begin{theorem}\label{t4.1}
  Assume that $\Psi$ is determinantal (in particular
  Poisson) with intensity $\alpha>1$.
  Then there exists $c_1>0$ such that
  $\BP(\|\tau(\Z^d,\Psi,0)\|>r)\le c_1 e^{-c^{-1}_1r^d}$
  for all $r\ge 0$.
\end{theorem}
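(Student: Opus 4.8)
The plan is to show that if the matching distance $\|\tau(\Z^d,\Psi,0)\|$ is large, then there must be a large region where $\Psi$ is ``deficient'' — that is, it contains fewer points than the lattice cells it must service — and then to control the probability of such a deficiency using the concentration inequality for determinantal point processes (Theorem \ref{t:dpp_conc}). The key intuition is that because $\alpha>1$, the Poisson/determinantal process typically has a surplus of points, so a lattice point is matched nearby with overwhelming probability; a far-away match can only happen if a whole neighborhood of $\Psi$ has an atypical shortfall.

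\medskip\noindent
\textbf{Step 1 (geometric reduction, following \cite{HPPS09}).}
First I would establish a deterministic geometric fact: if $\|\tau(\Z^d,\Psi,0)\|>r$, then there exists some region $W$ (say a ball $B(0,R)$ with $R$ comparable to $r$, or perhaps a slightly different stopping-set-adapted region) on which the number of lattice points strictly exceeds the number of $\Psi$-points. The reasoning mirrors the stability argument of Holroyd--Pemantle--Peres--Schramm: if the origin is matched far away (or not at all within radius $r$), stability forces all the $\Psi$-points within a certain ball to be ``claimed'' by even closer lattice competitors, producing a local density inequality
\begin{align*}
\Psi(B(0,R)) < \Phi(B(0,R)) = \Z^d(B(0,R)).
\end{align*}
Since $\Z^d$ has intensity $1$ and $\Psi$ has intensity $\alpha>1$, this is a deviation of the point count by a multiplicative factor bounded away from $1$, i.e. a deficit of order $\lambda_d(B(0,R)) \gtrsim R^d \gtrsim r^d$.

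\medskip\noindent
\textbf{Step 2 (concentration).}
With the event $\{\|\tau(\Z^d,\Psi,0)\|>r\}$ contained in an event of the form $\{\Psi(B(0,R)) \le \lambda_d(B(0,R))\}$, which is a lower-tail deviation of the linear statistic $\Psi(B)$ below its mean $\alpha\lambda_d(B)$, I would apply the determinantal concentration inequality. The statistic $\varphi\mapsto\varphi(B)$ is $1$-Lipschitz (adding or moving one point changes the count by at most one), so Theorem \ref{t:dpp_conc} yields a Gaussian-type bound $\BP(\Psi(B)-\alpha\lambda_d(B)\le -t)\le \exp(-ct^2/\lambda_d(B))$. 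Taking $t$ proportional to the deficit $\lambda_d(B)\sim r^d$ gives a bound of order $\exp(-c' r^d)$, matching the claimed rate $e^{-c_1^{-1}r^d}$. A union bound over the (polynomially many in $r$) candidate shifts or scales, if needed, is absorbed into the constant since the exponential dominates.

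\medskip\noindent
\textbf{Main obstacle.}
The delicate part is Step 1: turning ``matched far away'' into a clean statement about a density deficit on an explicitly described region, and ensuring the region's radius $R$ is genuinely $\Theta(r)$ rather than merely bounded below. The stability argument must be set up so that the competing chains emanating from the origin are all confined to a controllable ball, which is exactly where the matching-flower and stopping-set machinery of Section \ref{sec:matching_flower} should help; I would likely phrase the deficiency event in terms of the matching flower $F(\Phi,\Psi,0)$ being large. A secondary technical point is handling the determinantal concentration inequality's precise hypotheses (boundedness/decay of the kernel, which follow from the standing assumptions via \eqref{e:Had}), and confirming that the $r^d$ exponent — rather than $r^{2d}$ from a naive variance scaling — is correct; this works because the deficit $t$ and the variance proxy $\lambda_d(B)$ are both of order $r^d$, so $t^2/\lambda_d(B)\sim r^d$.
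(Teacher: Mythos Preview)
Your Step~1 contains a genuine gap. The inequality $\Psi(B(0,R)) < \Z^d(B(0,R))$ does \emph{not} follow from $\|\tau(0)\|>r$ by stability alone. What stability actually gives (this is the paper's Lemma~\ref{l4.5}) is that every $\Psi$-point in $B_r$ is matched to some lattice point in $B_{2r}$, i.e.\ $\Psi(B_r)\le \mu_0(\{p\in\Z^d\cap B_{2r}:\tau(p)\in B_r\})$. The right-hand side is not $\Z^d(B_r)$; it involves the random matching itself. The only trivial deterministic bound is $\mu_0(B_{2r})\approx 2^d\lambda_d(B_r)$, which produces a deviation event for $\Psi(B_r)$ only when $\alpha>2^d$. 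For $\alpha$ close to $1$ --- precisely the regime of interest --- a single concentration bound on $\Psi(B_r)$ gives nothing. Lattice points in the annulus $B_{2r}\setminus B_r$ can legitimately be matched into $B_r$ even when $\Psi$ has a surplus everywhere, so no ball-vs-ball deficit is forced.

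The paper closes this gap with a second, independent ingredient that you underestimate as a technicality. It tiles the annulus $B_{2r}\setminus B_r$ by cubes $Q_q$ of a fixed mesoscopic side $2m$ and proves (Lemma~\ref{l4.7}, an averaging argument using a.s.\ boundedness of the matching flower) that the expected number of lattice points in $Q_q$ whose flower escapes $Q_q$ is at most $\varepsilon(2m)^d$ for any prescribed $\varepsilon$. Summing over the tiling yields a random variable $Z_r$ that bounds how many annulus lattice points can be matched into $B_r$. The determinantal concentration inequality is then applied \emph{twice}: once to $\Psi(B_r)$ (your Step~2) and once to $Z_r$, which is a $(2m)^d$-Lipschitz functional of $\Psi$ restricted to the disjoint cubes. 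Only on the intersection of both good events does one obtain $\mu_0(\{p\in B_{2r}:\tau(p)\in B_r\})<\Psi(B_r)$ and hence $\|\tau(0)\|\le r$. This second concentration step, together with Lemma~\ref{l4.7}, is the heart of the proof for general $\alpha>1$, not a boundary correction.
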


Our proof of Theorem \ref{t4.1} was inspired by that of Theorem~21 in~\cite{HoHoPe09}.
First we need to establish a few auxiliary results.
We write $\mu_0$ for the counting measure supported by $\Z^d$, that is
we let $\mu_0(B):=\card(\Z^d\cap B)$ for each Borel set $B\subset\R^d$.
In the following lemma, we use the {\em intrinsic volumes}
$V_0(K),\ldots,V_d(K)$ of a compact convex subset $K$ of $\R^d$;
see e.g.\ Appendix A.3 in \cite{LastPenrose17}. In the following proof and also
later we denote by $B_r$  
the ball with radius $r\ge 0$ centered at the origin.
We shall frequently use the following version of a classical fact; see~\cite{Wills1973}.
For the reader's convenience, we give a short proof.

\begin{lemma}\label{lcounting}
Let $K$ be a compact convex subset of $\R^d$. 
There exist constants $c_0,...,c_{d-1}$ (which only depend on the dimension), such that
\begin{align*}
    |\lambda_d(K)-\mu_0(K)|\le c_0V_0(K)+\cdots+ c_{d-1}V_{d-1}(K).
\end{align*}
\end{lemma}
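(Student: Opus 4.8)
The plan is to compare $\mu_0(K)$ with $\lambda_d(K)$ by assigning to each lattice point its unit cell and then controlling the resulting error by the volume of a boundary shell, which the Steiner formula expresses through the lower intrinsic volumes.

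First I would set $C:=[-1/2,1/2)^d$ and $Q:=\bigcup_{p\in\Z^d\cap K}(p+C)$. Since the cells $p+C$, $p\in\Z^d$, are pairwise disjoint and have unit volume, $\lambda_d(Q)=\mu_0(K)$, and therefore $|\lambda_d(K)-\mu_0(K)|\le\lambda_d(Q\,\triangle\,K)$. The key geometric observation, using convexity of $K$, is that $Q\,\triangle\,K$ is contained in the boundary layer $\Sigma:=\{x\in\R^d:d(x,\partial K)\le \sqrt d/2\}$. Indeed, if $x\in Q\setminus K$ then $x$ lies in a cell $p+C$ with $p\in K$, so $\|x-p\|\le\sqrt d/2$ and the segment $[p,x]$ meets $\partial K$, giving $d(x,\partial K)\le\sqrt d/2$; symmetrically, if $x\in K\setminus Q$ then its cell is centred at some $p\notin K$ with $\|x-p\|\le\sqrt d/2$, and again $[x,p]$ crosses $\partial K$.

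Next, writing $r:=\sqrt d/2$, and letting $K^{+r}:=\{x:d(x,K)\le r\}$ be the outer and $K_{-r}:=\{x\in K:d(x,\partial K)\ge r\}$ the inner parallel body, one checks that $\Sigma=K^{+r}\setminus\operatorname{int}K_{-r}$, whence
\[
  \lambda_d(Q\,\triangle\,K)\le\lambda_d(\Sigma)
  =\bigl(\lambda_d(K^{+r})-\lambda_d(K)\bigr)+\bigl(\lambda_d(K)-\lambda_d(K_{-r})\bigr).
\]
The first bracket is handled directly by the Steiner formula $\lambda_d(K^{+r})=\sum_{j=0}^d\kappa_{d-j}r^{d-j}V_j(K)$, which gives $\lambda_d(K^{+r})-\lambda_d(K)=\sum_{j=0}^{d-1}\kappa_{d-j}r^{d-j}V_j(K)$ because $V_d=\lambda_d$.

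The one genuinely delicate point — and the step I expect to be the main obstacle — is bounding the inner bracket $\lambda_d(K)-\lambda_d(K_{-r})$ without a Steiner formula for inner parallel bodies. Here I would recover both volumes by integrating surface areas of the nested convex bodies $K_{\mp s}$, $0\le s\le r$, and invoke monotonicity of the surface area (equivalently of $V_{d-1}$) under inclusion of convex bodies: since $K_{-s}\subseteq K\subseteq K^{+s}$, the surface area of $\partial K_{-s}$ is at most that of $\partial K^{+s}$, so
\[
  \lambda_d(K)-\lambda_d(K_{-r})\le\lambda_d(K^{+r})-\lambda_d(K)
  =\sum_{j=0}^{d-1}\kappa_{d-j}r^{d-j}V_j(K).
\]
Combining the two brackets yields $|\lambda_d(K)-\mu_0(K)|\le 2\sum_{j=0}^{d-1}\kappa_{d-j}r^{d-j}V_j(K)$, so the claim holds with $c_j:=2\kappa_{d-j}(\sqrt d/2)^{d-j}$, depending only on $d$. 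Minor technical care is needed when $K_{-s}$ degenerates (becomes lower-dimensional or empty) for some $s<r$, but there $\lambda_d(K_{-s})=0$ and the surface-area comparison only improves, so the bound is unaffected.
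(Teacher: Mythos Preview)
Your proof is correct and follows essentially the same route as the paper: reduce to bounding the volume of a tubular neighbourhood of $\partial K$ and control this via the Steiner formula. The paper uses the cubes $[0,1)^d+q$ and separate upper and lower estimates (radius $\sqrt d$), whereas you use centred cubes and the symmetric difference (radius $\sqrt d/2$), which is slightly cleaner and gives a sharper constant. The one substantive step you carry out that the paper does not is the inequality $\lambda_d(K)-\lambda_d(K_{-r})\le\lambda_d(K^{+r})-\lambda_d(K)$; the paper invokes exactly this fact (in the equivalent form $\lambda_d(\partial_r K)\le 2(\lambda_d(K+B_r)-\lambda_d(K))$) by citing (3.19) of \cite{HLS16}, while your surface-area monotonicity argument supplies a self-contained proof.
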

{\em Proof:} The proof is based on basic convex geometry. Let $W:=[0,1)^d$
and define $I:=\{q\in\Z^d: (W+q)\cap K\ne\emptyset\}$. Then
\begin{align}
\label{e:muineq}
\mu_0(K)\le\card I \le \lambda_d(K+B_{\sqrt{d}}),
\end{align}
where $K+L:=\{x+y:x\in K,y\in L\}$ is the {\em Minkowski sum}
of $K$ and a set $L\subset\R^d$.
By the {\em Steiner formula} (see e.g.\ Appendix A.3 in \cite{LastPenrose17})
\begin{align}
\label{e:steiner}
\lambda_d(K+ B_{\sqrt{d}})=\lambda_d(K)+\sum^{d-1}_{j=0}d^{(d-j)/2}\kappa_{d-j}V_j(K),
\end{align}
where $\kappa_j$ is the volume of the unit ball in $\R^j$.
This gives one of the desired inequalities.

Let $J_1:=\{q\in\Z^d: (W+q)\subset K^0\}$
and
$J_2:=\{q\in\Z^d: (W+q)\cap \partial K\ne\emptyset\}$,
where $K^0$ and $\partial K$ denote the {\em interior} and the {\em boundary} 
of $K$ respectively. Then
\begin{align*}
\lambda_d(K)\le \card J_1+\card J_2\le \mu_0(K)+ \card J_2. 
\end{align*}
For $r>0$ let $\partial_rK:=\{x\in\R^d: d(x,\partial K)\le r\}$.
Since $\cup_{q\in J_2}(W+q)\subset \partial_{\sqrt d}K$, we have that
\begin{align*}
\card J_2\le \lambda_d(\partial_{\sqrt d}K)
\le 2(\lambda_d(K+B_{\sqrt{d}})-\lambda_d(K)),
\end{align*}
where the last inequality comes from (3.19) in \cite{HLS16}.
Applying Steiner's formula again, concludes the proof.
\qed

\bigskip

Unless stated otherwise, we now assume that $\Psi$ satisfies
the assumptions of Theorem~\ref{t4.1}.
Further we abbreviate $\tau(x):=\tau(\Z^d,\Psi,x)$, $x\in\R^d$.

\begin{lemma}\label{l4.5} Let $r>0$ and assume that
$\mu_0(\{p\in B_{2r}:\tau(p)\in B_{r}\})<\Psi(B_r)$. Then
$\|\tau(\Z^d,\Psi,0)\|\le r$.
\end{lemma}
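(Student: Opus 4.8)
The plan is to prove the contrapositive: assuming $\|\tau(\Z^d,\Psi,0)\| > r$, I will show that $\mu_0(\{p \in B_{2r} : \tau(p) \in B_r\}) \ge \Psi(B_r)$. The entire argument rests on stability (Proposition \ref{stable-matching}) together with the fact that a matching is an involution on its matched points; no property of the determinantal structure is actually needed for this step.

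First I would fix an arbitrary $x \in \Psi \cap B_r$ and examine the pair $(0,x) \in \Z^d \times \Psi$. Since $\|x\| \le r < \|\tau(0)\|$ (this also covers the case $\tau(0)=\infty$, where $\|\tau(0)\|=\infty$), the lattice point $0$ strictly prefers $x$ to its own partner $\tau(0)$; the distance gap is strict, so the lexicographic tie-breaking in the preference relation is irrelevant here. Stability forbids $(0,x)$ from being unstable, so $x$ cannot prefer $0$ to $\tau(x)$. In particular $x$ must be matched: were $\tau(x)=\infty$, then (using the convention $|x-\infty|=\infty$) $x$ would prefer the finite point $0$ to $\tau(x)$, making $(0,x)$ unstable. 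Hence $\tau(x)\in\Z^d$, and $x$ prefers $\tau(x)$ to $0$, which yields the distance bound $\|\tau(x)-x\| \le \|x\|$.

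From this bound I would deduce that $\tau(x)$ lands in the relevant counting set. The triangle inequality gives $\|\tau(x)\| \le \|\tau(x)-x\| + \|x\| \le 2\|x\| \le 2r$, so $\tau(x)\in B_{2r}$, while the involution property gives $\tau(\tau(x))=x\in B_r$. Therefore $\tau(x) \in \{p \in B_{2r} : \tau(p)\in B_r\}$. I would also note that $\tau(x)\neq 0$, since $\tau(x)=0$ would force $\tau(0)=x\in B_r$, contradicting $\|\tau(0)\|>r$.

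Finally I would observe that the assignment $x\mapsto\tau(x)$ is an injection from $\Psi\cap B_r$ into $\{p\in B_{2r}:\tau(p)\in B_r\}$, injectivity being immediate from the involution property. Counting points then gives $\Psi(B_r) \le \mu_0(\{p\in B_{2r}:\tau(p)\in B_r\})$, exactly the negation of the hypothesis. The only delicate points are bookkeeping ones — ruling out $\tau(x)=\infty$ and $\tau(x)=0$, and checking that the strict gap $\|x\| < \|\tau(0)\|$ renders the lexicographic tie-breaking harmless — so I expect no genuine obstacle beyond phrasing the stability deduction cleanly.
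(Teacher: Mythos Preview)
Your proof is correct and is essentially the same argument as the paper's, just organized as a contrapositive with the injection $x\mapsto\tau(x)$ made explicit; the paper instead phrases it as a direct contradiction by picking one $x\in\Psi\cap B_r$ with $\tau(x)\notin B_{2r}$ and noting $(0,x)$ is unstable. The underlying mechanism---stability of the pair $(0,x)$ forcing $\tau(x)$ to lie in $B_{2r}$---is identical.
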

{\em Proof:} By assumption there must exist an $x\in\Psi\cap B_{r}$
such that $\tau(x)\notin B_{2r}$. Assume now on the contrary, that
$\|\tau(\Z^d,\Psi,0)\|=\|\tau(0)\|> r$. But then the pair $(0,x)$ would not be stable, a contradiction.\qed

\bigskip
For a (possibly half-open) cube $B\subset\R^d$ and $q\in \Z^d\cap B$, we say that
$B$ {\em is decisive for} $q$ if the matching flower
$F(\Z^d,\Psi,q)$ is a subset of $B$. In this case we have $\tau(q)\in B$.
Moreover, the event that $B$ is decisive for $q$ does only depend
on $\Psi\cap B$.
Given $m\in\N$ we write $Q(m):=(-m,m]^d$.

\begin{lemma}\label{l4.7} For each $\varepsilon>0$ there exists $m\in\N$ such that
\begin{align*}
\BE \mu_0(\{p\in Q(m):\text{$Q(m)$ is not decisive for $p$}\})\le \varepsilon (2m)^{d}.
\end{align*}
\end{lemma}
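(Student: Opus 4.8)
The plan is to compute the expectation by linearity and then exploit shift-covariance to reduce everything to a single random variable, namely the radius of the matching flower seen from the origin. First I would write
\[
\BE\,\mu_0(\{p\in Q(m):\text{$Q(m)$ is not decisive for $p$}\})=\sum_{q\in\Z^d\cap Q(m)}\BP\big(F(\Z^d,\Psi,q)\not\subset Q(m)\big),
\]
using that $\mu_0$ is the counting measure on $\Z^d$ and that $Q(m)$ contains exactly $(2m)^d$ lattice points. By the covariance property \eqref{eshiftcov}, which the matching flower inherits directly from its definition via competing chains, together with the stationarity of $\Psi$, one has $F(\Z^d,\Psi,q)-q=F(\Z^d,\Psi-q,0)\overset{d}{=}F_0:=F(\Z^d,\Psi,0)$ for every $q\in\Z^d$. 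Setting $R(q):=\sup\{\|y-q\|:y\in F(\Z^d,\Psi,q)\}$, this gives $R(q)\overset{d}{=}R_0:=R(0)$. Crucially, $R_0<\infty$ almost surely: the origin is matched almost surely by Theorem \ref{2.11} (with $U\equiv 0$) and Corollary \ref{c3.6}, and then Proposition \ref{finite-matching-flower} guarantees that $F_0$ is bounded. Note that this input does not use Theorem \ref{t4.1}, which matters because the present lemma feeds into the proof of that theorem.

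The next step is a purely geometric sufficient condition for decisiveness. If the Euclidean ball $B(q,R(q))$ is contained in $Q(m)$, then a fortiori $F(\Z^d,\Psi,q)\subset Q(m)$, so $Q(m)$ is decisive for $q$. Writing $\delta(q):=m-\|q\|_\infty$ for the distance of $q$ to the boundary of $Q(m)$ in the maximum norm, one has $B(q,R(q))\subset Q(m)$ whenever $R(q)<\delta(q)$. Hence
\[
\BP\big(F(\Z^d,\Psi,q)\not\subset Q(m)\big)\le\BP\big(R(q)\ge\delta(q)\big)=\BP\big(R_0\ge m-\|q\|_\infty\big).
\]

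Finally I would sum this bound and control it by a counting-and-splitting argument. Grouping the lattice points of $Q(m)$ by the value $\ell:=m-\|q\|_\infty\in\{0,\ldots,m\}$, the number of $q$ with a given $\ell$ is the cardinality of a maximum-norm sphere and is bounded by $C_d\,m^{d-1}$ for a dimensional constant $C_d$. Thus the sum is at most $C_d\,m^{d-1}\sum_{\ell=0}^{m}\BP(R_0\ge\ell)$. Splitting this sum at a threshold $K$ and using $\BP\le 1$ on the first block yields the bound
\[
C_d\,m^{d-1}(K+1)+C_d\,m^{d}\,\BP(R_0> K),
\]
so after dividing by $(2m)^d$ the right-hand side is at most $\tfrac{C_d(K+1)}{2^d m}+\tfrac{C_d}{2^d}\BP(R_0>K)$. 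Given $\varepsilon>0$, I would first choose $K$ so large that the second term is below $\varepsilon/2$ (possible since $R_0<\infty$ almost surely, so $\BP(R_0>K)\to0$), and then choose $m$ so large that the first term is below $\varepsilon/2$.

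I expect the only genuinely delicate point to be the bookkeeping that ensures no circularity and that mere almost-sure finiteness of $R_0$ suffices: the threshold split is exactly what lets me avoid assuming any integrability of $R_0$, such as the tail bound of Theorem \ref{t4.1}, which is not yet available at this stage. The counting of lattice points in maximum-norm shells and the verification of the ball-in-cube containment are routine, and I would not dwell on them.
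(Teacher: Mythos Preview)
Your proof is correct and follows essentially the same approach as the paper: reduce to the origin by shift-covariance, use almost-sure boundedness of the matching flower (Proposition~\ref{finite-matching-flower}), and split $Q(m)$ into a bulk region where the flower fits and a thin boundary shell. The paper phrases the bulk/boundary split with translated cubes $Q(n)+q\subset Q(m)$ for $q\in Q(m-n)$ rather than your radius $R_0$ and shell decomposition, but the two formulations are interchangeable and your care about avoiding circularity with Theorem~\ref{t4.1} is well placed.
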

{\em Proof:} We combine the proof of Corollary~17 in~\cite{HoHoPe09} with the
properties of the matching flower. Let $n\in\N$. The distribution of the
random set
\begin{align*}
U_n:=\{q\in\Z^d: \text{$Q(n)+q$ is not decisive for $q$}\}
\end{align*}
is invariant under shifts from $\Z^d$. 
By Proposition~\ref{finite-matching-flower}, the matching flower is almost surely finite.
Therefore we may choose an integer $n$ so large such that
\[
  \BP(\text{$Q(n)$ is not decisive for $0$})\le\varepsilon/2.
\]
Therefore we obtain for each $m\ge n$
\begin{align*}
\BE\mu_0(U_n\cap Q(m))=\sum_{q\in Q(m)}\BP(\text{$Q(n)+q$ is not decisive for $q$})
\le(2m)^d\varepsilon/2.
\end{align*}
Since $Q(n)+q\subset Q(m)$ for $q\in Q(m-n)$
and the event $\{\text{$Q(m)$ is not decisive for $q$}\}$
is decreasing in $m$, we obtain that
\begin{align*}
\BE&\mu_0(\{q\in Q(m):\text{$Q(m)$ is not decisive for $q$}\})\\
&\le \mu_0(Q(m)\setminus Q(m-n))+\sum_{q\in Q(m-n)}\BP(\text{$Q(n)+q$ is not decisive for $q$})\\
&\le (2m)^d-(2(m-n))^d+ (2m)^d\varepsilon/2.
\end{align*}
For sufficiently large $m$ the last expression becomes smaller than
$(2m)^d\varepsilon$.\qed

\bigskip

{\em Proof of Theorem \ref{t4.1}:} We let $\varepsilon>0$ be such that
$\varepsilon (2^{d+1}+2)<\alpha-1$ and so, we also have that $\alpha-\varepsilon>1$.
Choose $m$ as in Lemma \ref{l4.7}.
For $q\in\Z^d$ we define $Q_q:=Q(m)+2mq$ and the random variable
\begin{align*}
Y_q:=\mu_0(\{p\in Q_q:\text{$Q_q$ is not decisive for $p$}\}).
\end{align*}

Next we take $r>0$ and define
\begin{align*}
I_r&:=\{q\in\Z^d:Q_q\subset B_{2r}\setminus B_r\},\\
S_r&:=(B_{2r}\setminus B_r)\setminus \bigcup_{q\in I_r}Q_q.
\end{align*}
Choosing $r$ sufficiently large, we can assume that
\begin{align}\label{e4.7}
  \mu_0(S_r)\le \varepsilon \mu_0(B_r).
\end{align}
The above inequality can be derived as follows. For $r$ sufficiently large, we have that
\[ S_r  \subset ( B_{2r} \setminus  B_{2r - 2m{\sqrt{d}}} )  \cup (B_{r + 2m\sqrt{d}} \setminus B_r). \]
Hence, we derive that
\begin{align*}
  \mu_0(S_r) & \leq \lambda_d(S_r + B_{\sqrt{d}}) \leq \lambda_d(B_{2r+\sqrt{d}}) - \lambda_d(B_{2r-3m\sqrt{d}}) + \lambda_d(B_{r+3m\sqrt{d}}) - \lambda_d(B_{r-\sqrt{d}}) \\
  & \leq C r^{d-1} \leq \varepsilon^{1/2} \lambda_d(B_r)  \leq \varepsilon \mu_0(B_r),
\end{align*}
where $C$ is a constant, the first inequality is similar to
\eqref{e:muineq}, the second is from the above-mentioned inclusion for $S_r$, the third
inequality is due to Steiner's formula \eqref{e:steiner} and the fact
that $V_j(B_r) = O(r^j)$ for all $j$. The last inequality is from
Lemma~\ref{lcounting} and the above bound on growth rate of the $V_j$. 

Next we note that
\begin{align*}
\mu_0(\{q\in B_{2r}:\tau (q)\in B_{r}\})&\le \mu_0(B_r)+\mu_0(S_r)
+\mu_0\Big(\Big\{p\in \bigcup_{q\in I_r}Q_q:\tau(p)\in B_{r}\Big\}\Big).
\end{align*}
If $p\in Q_q\subset B_{2r}\setminus B_r$ and $\tau(p)\in B_r$, 
then $Q_q$ is not decisive for $p$. Therefore
\begin{align}\label{e4.13}
\mu_0(\{q\in B_{2r}:\tau (q)\in B_{r}\})
\le \mu_0(B_r)+\mu_0(S_r)+Z_r,
\end{align}
where
\begin{align*}
Z_r:=\sum_{q\in I_r} Y_q.
\end{align*}
Consider the events
\begin{align*}
C_r&:=\{\Psi(B_r)>(\alpha-\varepsilon)\mu_0(B_r)\},\\
D_r&:=\Big\{Z_r<2^{d+1}\varepsilon \mu_0(B_r)\Big\}.
\end{align*}
If the event $D_r$ occurs, then we obtain from \eqref{e4.13},
\eqref{e4.7} and the definition of $\varepsilon$ that
\begin{align*}
\mu_0(\{q\in B_{2r}:\tau (q)\in B_{r}\})&
\le (1+\varepsilon+2^{d+1}\varepsilon) \mu_0(B_r)<(\alpha-\varepsilon)\mu_0(B_r).
\end{align*}
If $C_r\cap D_r$ occurs, we therefore have
\begin{align*}
\mu_0(\{q\in B_{2r}:\tau (q)\in B_{r}\})<\Psi(B_r).
\end{align*}
Lemma \ref{l4.5} shows that $\|\tau(0)\|\le r$.

To conclude the proof it now suffices to bound the probabilities of
the complements $\Omega\setminus C_r$ and $\Omega\setminus D_r$.
Observing that $\Psi(B_r)$ is a $1$-Lipschitz functional, we can use
Theorem \ref{t:dpp_conc} to derive that
\begin{align*}
  \BP(\Psi(B_r) \leq (\alpha-\varepsilon)\lambda_d(B_r))
\le 5\exp \left(-\frac{\varepsilon^2 \lambda_d(B_r)}{4(\varepsilon + 2\alpha)}\right).
\end{align*}
Since by Lemma~\ref{lcounting} $\mu_0(B_r)/\lambda_d(B_r)\to 1$ as $r\to\infty$ this gives the desired
inequality for $\BP(\Omega\setminus C_r)$. To treat
the complement of $D_r$ we apply Lemma \ref{l4.7}. This gives
\begin{align*}
\BE Z_r\le \varepsilon (2m)^{d}\mu_0(I_r)
\le \varepsilon \mu_0(B_{2r}).
\end{align*}
By Lemma \ref{lcounting} and the properties of intrinsic volumes
\begin{align*}
\mu_0(B_{2r})\le 2^d\lambda_d(B_r)+\sum^{d-1}_{j=0}c_j2^jr^jV_j(B_1),
\end{align*}
so that
\begin{align*}
\BE Z_r\le 3\varepsilon  2^{d-1}\lambda_d(B_{r})
\end{align*}
for all sufficiently large $r$. 
Note that $Z_r$ is a function of $\Psi \cap A$ where
$A = \cup_{q \in I_r} Q_q$. Since the $Y_q$
are bounded by $(2m)^d$ and the $Q_q$ are disjoint, we have that $Z_r$ is
$(2m)^d$-Lipschitz. Also note that $\lambda_d(A)=(2m)^d\mu_0(I_r)$. So we
can again apply the concentration inequality in Theorem \ref{t:dpp_conc} to derive that
\begin{align*}
\BP(\Omega\setminus D_r) &\le\BP(Z_r\ge \BE Z_r+ \varepsilon 2^{d-1}\lambda_d(B_{r})) \\
& \le 5\exp \left(- \frac{(\varepsilon 2^{d-1}\lambda_d(B_r))^2}{4(2m)^d(\varepsilon2^{d-1}\lambda_d(B_r)
 + 2(2m)^{2d}\alpha\mu_0(I_r))} \right) \\
& \le 5\exp 
\left(- \frac{(\varepsilon \lambda_d(B_r))^2}{8m^d(\varepsilon\lambda_d(B_r) + 2^{d+2}m^{2d}\alpha\mu_0(I_r))} \right)
\end{align*}
which in view of Lemma~\ref{lcounting} gives the desired result.\qed

\bigskip

We now return to a general stationary point process $\Psi$
with intensity $\alpha\ge 1$.
\begin{theorem}\label{t5.19}
Assume for each $n\in\N$ that the $n$-th correlation function
of $\Psi$ exists and is bounded by $n^{\theta n}c^n$ for some $c>0$ and $\theta \in [0,1)$.
Assume that there exists $c_1>0$ such that
$\BP(\|\tau(\Z^d,\Psi,0)\|>r)\le c_1 e^{-c^{-1}_1r^\delta}$
for all $r\ge 0$ and some $\delta > 0$.
Then there exists $c_2>0$ such that
\begin{align}\label{e5.3}
\BP(F(\Z^d,\Psi,0)\not\subset B_r)\le c_2e^{-c_2^{-1}r^{\beta}},\quad r>0,
\end{align}
where $\beta:=\min \{(1-\theta)\delta , 2d \}/(2d+1-\theta)$.
\end{theorem}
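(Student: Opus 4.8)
The plan is to bound $\BP(F(\Z^d,\Psi,0)\not\subset B_r)$ by splitting according to the size of the matching distance $D:=\|\tau(\Z^d,\Psi,0)\|$ relative to a threshold $s=s(r)$ chosen at the end. On $\{D>s\}$ the hypothesis gives immediately $\BP(D>s)\le c_1 e^{-c_1^{-1}s^{\delta}}$, so the real work is to control $\{D\le s\}\cap\{F(\Z^d,\Psi,0)\not\subset B_r\}$ by showing that on this event a long competing chain must exist.

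Here I would use the geometry of competing chains. Recall $F(\Z^d,\Psi,0)=\bigcup_c F_c$ is a union over competing chains $c=(z_1,\dots,z_n)$ for $0$, where $z_1=0$ and $z_2$ is equal or preferred to $\tau(\Z^d,\Psi,0)$, so that $\|z_2\|\le D$. Since a competing chain is a descending chain, the step lengths $\|z_{i+1}-z_i\|$ are non-increasing, hence bounded by $\|z_2\|\le D\le s$ on $\{D\le s\}$; consequently $\|z_i\|\le (i-1)s$ and each ball $B(z_i,\|z_i-z_{i-1}\|)$ of $F_c$ is contained in $B_{is}$. Thus, for $r$ large enough that $s\le r$, the event $\{D\le s\}\cap\{F(\Z^d,\Psi,0)\not\subset B_r\}$ forces a competing chain to reach index $i>r/s$, i.e.\ it forces a descending chain $(0,x_1,q_1,\dots,x_n,q_n)$ with $\|x_1\|\le s$ and $n\asymp r/s$. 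This is exactly the event $A_{n,s}$ of Lemma~\ref{descending_chains} (with $q_0=U=0$, $b=s$), so
\[
\BP\big(\{D\le s\}\cap\{F(\Z^d,\Psi,0)\not\subset B_r\}\big)\le \BP(A_{n,s})\le \frac{a_n^{\,n}}{n!},\qquad a_n=n^{\theta}c\kappa_d^2(s^2+2s\sqrt d)^d.
\]

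Next I would make this tail explicit. For $s\ge1$ one has $a_n\le Kn^{\theta}s^{2d}$ for a dimensional constant $K$, and Stirling's inequality $n!\ge (n/e)^n$ gives $a_n^{\,n}/n!\le (Ke\,n^{\theta-1}s^{2d})^n$. Substituting $n\asymp r/s$ turns the base into a constant times $r^{\theta-1}s^{2d+1-\theta}$. I would then take $s=\varepsilon_0\,r^{\beta/\delta}$: this choice makes the matching-distance term decay at exactly the rate $e^{-c\,r^{\beta}}$, while the base of the chain term becomes a constant times $\varepsilon_0^{2d+1-\theta}\,r^{E}$ with exponent $E=(\theta-1)+(2d+1-\theta)\beta/\delta$. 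The elementary identity $(2d+1-\theta)\beta/\delta=\min\{1-\theta,\,2d/\delta\}\le 1-\theta$ shows $E\le 0$, and a second elementary check gives $1-\beta/\delta\ge\beta$; so once the base is below $1$ the chain term is at most $\exp(-c\,r^{1-\beta/\delta})\le \exp(-c\,r^{\beta})$. Combining the two contributions and absorbing constants into $c_2$ (and covering small $r$ trivially) yields \eqref{e5.3}.

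The step I expect to be the main obstacle is the borderline regime $(1-\theta)\delta\le 2d$, where $E=0$ exactly and the chain-term base degenerates to a pure constant rather than decaying in $r$. This is precisely why I carry the free prefactor $\varepsilon_0$: choosing $\varepsilon_0$ small enough forces the base below $1$ (so the chain term still decays like $e^{-c\,r^{1-\beta/\delta}}$), at the harmless cost of shrinking the constant in the matching-distance rate while leaving the exponent $\beta$ untouched. Verifying the two inequalities $E\le 0$ and $1-\beta/\delta\ge\beta$ is where the specific value $\beta=\min\{(1-\theta)\delta,2d\}/(2d+1-\theta)$ — and the split into the two regimes according to the sign of $(1-\theta)\delta-2d$ — enters.
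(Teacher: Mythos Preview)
Your proposal is correct and follows essentially the same route as the paper: split on the matching distance, observe that on $\{D\le s\}$ any competing chain escaping $B_r$ must have length of order $r/s$, apply Lemma~\ref{descending_chains} with $b=s$, and then optimize the threshold. The only cosmetic difference is the parametrization of the threshold---the paper fixes the exponent $\varepsilon=(1-\theta)/(2d+1-\theta)$ (so that the chain-term base is an $r$-free constant made small by taking $K$ large) and then takes the minimum of the two resulting exponents, whereas you set $s=\varepsilon_0 r^{\beta/\delta}$ directly; in the regime $(1-\theta)\delta\le 2d$ these coincide (your $\varepsilon_0$ plays the role of the paper's $K^{-1}$), and in the other regime both choices deliver the same final exponent $\beta$.
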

{\em Proof:} The proof follows closely that of Lemma~2.4
in \cite{LastPenrose2013}. Let $r\ge 1$.
We take $\varepsilon\in[0,1)$ and
a large $K\ge 4$ both to be fixed later. Let 
$m$ be the largest integer smaller than $K r^{1-\varepsilon}$
and let $E$ be the event 
that there exists a descending chain $(0,z_1,\ldots,z_m)$ in
$(\Z^d,\Psi)$ such that $\|z_1\|\le K^{-1}r^\varepsilon$.
We assert that
\begin{align}\label{e5.4}
\{F(\Z^d,\Psi,0)\not\subset B_r\}\subset \{KY>r^\varepsilon\}\cup E,
\end{align}
where $Y:=\|\tau(\Z^d,\Psi,0)\|$. 
To see this we assume that $KY\le r^\varepsilon$ and that $E$ does
not hold. Take a descending chain $(0,z_1,\ldots,z_k)$ in
$(\Z^d,\Psi)$ such that $\|z_1\|\le Y$. Since $Y\le K^{-1}r^\varepsilon$
we have $k\le m-1$ and hence
$\|z_k\|\le (m-1)Y\le (K r^{1-\varepsilon}-1)Y$. Therefore, by the definition of the matching flower, each point 
in $F(\Z^d,\Psi,0)$ has norm at most $K r^{1-\varepsilon}Y\le r$, proving
\eqref{e5.4}.

Next we bound $\BP(E)$. There is a unique
$n\in\N$ such that $m\in\{2n,2n+1\}$. By Lemma \ref{descending_chains}
we have that $\BP(E)\le a_n^n/n!$, 
where 
$$
a_n=n^{\theta}c\kappa_d^2(K^{-2}r^{2\varepsilon}+2K^{-1}r^{\varepsilon}\sqrt{d})^d.
$$
Since $n!\ge n^n e^{-n}$ (by the exponential series), we obtain
\begin{align*}
\BP(E)&\le \big(n^{\theta-1}ce\kappa_d^2(K^{-2}r^{2\varepsilon}+2K^{-1}r^{\varepsilon}\sqrt{d})^d\big)^n\\
&\le \big(n^{\theta-1}ce\kappa_d^2K^{-d}(1+2\sqrt{d})^dr^{2\varepsilon d}\big)^n,
\end{align*}
where the second inequality is due to $K\ge 1$ and $r\ge 1$.
Note that
$$
n\ge (m-1)/2\ge  (K r^{1-\varepsilon}-2)/2\ge K r^{1-\varepsilon}/4, 
$$
where the third inequality comes from $K\ge 4$ and $r\ge 1$.
Therefore
\begin{align*}
\BP(E) 
\le \big(4^{1-\theta}ce\kappa_d^2K^{\theta-1}K^{-d}r^{(1-\varepsilon)(\theta-1)}
(1+2\sqrt{d})^dr^{2\varepsilon d}\big)^n.
\end{align*}
We now set $\varepsilon:=(1-\theta)/(2d+1-\theta)$, so that
$(1-\varepsilon)(\theta-1)+2\varepsilon d=0$.
Further we choose $K$ so large that
$$
4^{1-\theta}ce\kappa_d^2(1+2\sqrt{d})^dK^{\theta-1}K^{-d}\le e^{-1}.
$$
Then
\begin{align}\label{e5.33}
\BP(E)\le \exp[-n]\le \exp[-K r^{1-\varepsilon}/4]
=\exp\big[-(K/4)r^{2d/(2d+1-\theta)}\big].
\end{align}
On the other hand we obtain from our assumptions that
\begin{align*}
\BP(KY>r^\varepsilon)
\le c_1\exp\big[-c^{-1}_1K^{-\delta}r^{\varepsilon \delta}\big]
= c_1\exp\big[-c^{-1}_1K^{-\delta}r^{(1-\theta) \delta/(2d+1-\theta)} \big].
\end{align*}
Using this and inequality \eqref{e5.33} in \eqref{e5.4}
yields the asserted inequality.
\qed

\bigskip

Next we consider the distance to the matching partner of an extra point added to 
$\Psi$. For $x \in \R^d$, recall that $\Psi^x = \Psi \cup \{x\}.$
\begin{theorem}\label{t4.13} Assume that there exists $c_1>0$ such that
$\BP(\|\tau(\Z^d,\Psi,0)\|>r)\le c_1 e^{-c^{-1}_1r^\delta}$
for all $r\ge 0$ and some $\delta > 0$. Then there exists $c_3>0$ such that 
\begin{align}\label{e5.79}
\BP(r< \|\tau(\Z^d,\Psi^x,x)-x\|<\infty)\le c_3e^{-c^{-1}_3r^\delta},\quad r>0,\,x\in\R^d.
\end{align}
\end{theorem}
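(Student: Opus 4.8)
The plan is to reduce \eqref{e5.79} to a union bound over lattice points governed by the hypothesised tail, with the whole argument resting on a single deterministic \emph{comparison (monotonicity)} property of the stable matching. Abbreviate $\tau(y):=\tau(\Z^d,\Psi,y)$ and $\tau^x(\cdot):=\tau(\Z^d,\Psi^x,\cdot)$; since $\BP(x\in\Psi)=0$ for fixed $x$, we may assume $x\notin\Psi$. On the event $\{r<\|\tau^x(x)-x\|<\infty\}$ we have $\tau^x(x)=q$ for a unique $q\in\Z^d$ with $\|x-q\|>r$, and these events are disjoint in $q$, so
\begin{align*}
\BP(r<\|\tau^x(x)-x\|<\infty)=\sum_{q\in\Z^d:\,\|x-q\|>r}\BP(\tau^x(x)=q).
\end{align*}
The key step is the inequality: almost surely, if $\tau^x(x)=q$ then $\|\tau(q)-q\|\ge\|x-q\|$; in words, the lattice point to which the inserted point gets attached was, in the original matching, paired no closer than $x$. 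Granting this, $\{\tau^x(x)=q\}\subset\{\|\tau(q)-q\|\ge\|x-q\|\}$ up to a null set, and by the covariance property \eqref{eshiftcov} together with stationarity of $\Psi$ (which give $\|\tau(q)-q\|\overset{d}{=}\|\tau(0)\|$ for every $q\in\Z^d$) and the hypothesis, $\BP(\tau^x(x)=q)\le\BP(\|\tau(0)\|\ge\|x-q\|)\le c_1 e^{-c_1^{-1}\|x-q\|^\delta}$ (adjusting the constant to pass from the hypothesised $>$ to $\ge$).

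To prove the comparison inequality I would argue by contradiction, producing an infinite descending chain in $(\Z^d,\Psi)$. Suppose $\tau^x(x)=q$ but $\|\tau(q)-q\|<\|x-q\|$; then $q$ is matched in the original matching, say to $y_0:=\tau(q)\in\Psi$, and $q$ strictly prefers $y_0$ to its $\Psi^x$-partner $x$. Since $\tau^x$ is stable on $(\Z^d,\Psi^x)$ (Proposition~\ref{stable-matching}), the pair $(q,y_0)$ is not unstable, so $y_0$ prefers its partner $q_1:=\tau^x(y_0)$ to $q$. Preferences are intrinsic, so in the original matching $y_0$ likewise prefers $q_1$ to its partner $q$, and stability of $\tau$ forces $q_1$ to prefer its original partner $y_1:=\tau(q_1)$ to $y_0$; feeding this back into stability of $\tau^x$ forces $y_1$ to prefer $q_2:=\tau^x(y_1)$ to $q_1$, and so on. This yields a sequence $(q,y_0,q_1,y_1,q_2,\dots)$ in which each point strictly prefers its successor to its predecessor, i.e.\ an infinite descending chain in $(\Z^d,\Psi)$ (none of the $y_i$ equals $x$, as they are $\tau$-partners in $\Psi$). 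The successive edge lengths are non-increasing in the lexicographically refined, hence strict, preference order, so the points are pairwise distinct and the chain is genuinely infinite, contradicting the almost sure absence of infinite descending chains in $(\Z^d,\Psi)$ — which must hold here, as otherwise $\tau$ would not be well defined by Lemma~\ref{as-matching-of-points}. (The stability of $\tau^x$ invoked above is legitimate: any infinite descending chain in $(\Z^d,\Psi^x)$ would, after deleting its at most one occurrence of $x$, contain one in $(\Z^d,\Psi)$, so $(\Z^d,\Psi^x)$ also almost surely has none.)

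It remains to sum. Grouping lattice points by the shell $\|x-q\|\in[k,k+1)$, whose cardinality is $O(k^{d-1})$ uniformly in $x$ (as in \eqref{e:muineq}), gives
\begin{align*}
\BP(r<\|\tau^x(x)-x\|<\infty)\le C\sum_{k\ge r}(k+1)^{d-1}c_1 e^{-c_1^{-1}k^\delta},
\end{align*}
and a routine estimate of this stretched-exponential tail sum bounds the right-hand side by $c_3 e^{-c_3^{-1}r^\delta}$ for a suitable $c_3>0$ and all $r>0$ (for small $r$ one enlarges $c_3$ so the bound exceeds $1$); the polynomial shell factor is absorbed into the stretched exponential. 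I expect the only real difficulty to lie in the comparison inequality: one must set up the alternating sequence so that it is simultaneously consistent with the stability of the two matchings on the differing ground sets $\Psi$ and $\Psi^x$, verify that it is a bona fide descending chain by carrying the lexicographic tie-breaking through so that strict preferences and distinctness of points are maintained, and then invoke the absence of infinite descending chains. The reduction and the tail summation are then routine.
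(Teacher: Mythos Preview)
Your proof follows the paper's route exactly: decompose over the lattice point $q=\tau^x(x)$, bound $\BP(\tau^x(x)=q)$ by the tail $\BP(\|\tau(0)\|\ge\|q-x\|)$ via a comparison inequality, then sum using stationarity and the hypothesis. You actually go further than the paper, which merely asserts the comparison step (written there as $\BP(\tau^x(q)=x)\le\BP(\|\tau(q)-q\|>\|q-x\|/2)$, with an extraneous factor $1/2$) without justification; your alternating-chain construction correctly shows that a failure of the sharper inequality $\|\tau(q)-q\|\ge\|q-x\|$ produces an infinite descending chain in $(\Z^d,\Psi)$.

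One slip to fix: the almost sure absence of infinite descending chains is not a consequence of Lemma~\ref{as-matching-of-points} --- that lemma concerns uniqueness, not well-definedness; $\tau$ is always defined by the iterative algorithm and stable by Proposition~\ref{stable-matching}, regardless of descending chains --- nor does it follow from the tail hypothesis alone. It is an implicit extra assumption (shared by the paper's own argument) which in every application is supplied separately by Theorem~\ref{th_no_inf_chains} or Corollary~\ref{c3.6}. Your parenthetical about $(\Z^d,\Psi^x)$ is likewise unnecessary: stability of $\tau^x$ comes directly from Proposition~\ref{stable-matching}.
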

{\em Proof:} Let $x\in\R^d$ and $r>0$. Suppose that for all $q\in \Z^d\setminus B(x,r)$ we have that
$\tau(\Z^d,\Psi,q) \neq x$.  Then the matching partner
of $x$ (if any) in the matching of $\Psi^x$ with $\Z^d$ must be in $B(x,r)$. Therefore either
$\tau(\Z^d,\Psi^x,x)=\infty$ or $\|\tau(\Z^d,\Psi^x,x)-x\|\le r$.
It follows that
\begin{align*}
\BP(r<\|\tau(\Z^d,\Psi^x,x)-x\|<\infty)
& \le \BP(\text{there exists $q\in \Z^d\setminus B(x,r)$ with
$\tau(\Z^d,\Psi,q) = x$}) \\
& \le \sum_{q\in \Z^d\setminus B(x,r)} \BP(\tau(\Z^d,\Psi,q) = x).
\end{align*}
Therefore we obtain from
stationarity and our assumption that
\begin{align*}
  \BP(r<\|\tau(\Z^d,\Psi^x,x)-x\|<\infty) 
& \leq \sum_{q\in \Z^d\setminus B(x,r)} \BP(\|\tau(\Z^d,\Psi,q) - q\| > \|q-x\|/2) \\ 
& = \sum_{q\in \Z^d\setminus B(x,r)} \BP(\|\tau(\Z^d,\Psi,0)\| > \|q-x\|/2) \\
& \leq \sum_{q\in \Z^d\setminus B(x,r)} c_1 e^{-c'_1 \|q-x\|^{\delta}}
\end{align*}
for some $c_1'>0$. 
In view of the monotonicity of the exponential function
there exists $b_1'>0$ such that the above right-hand side can be bounded by
\begin{align*}
b_1'\int &\I\{\|y-x\|>r\} e^{-c'_1 \|y-x\|^{\delta}}\,dy
=b_1'\int \I\{\|y\|>r\} e^{-c'_1 \|y\|^{\delta}}\,dy\\
&=d\kappa_d b_1'\int^\infty_{r}u^{d-1}e^{-c_1'u^\delta}\,du\\
&=d\kappa_d b_1'\delta^{-1}\int^\infty_{r^{\delta}}v^{(d-\delta)/\delta} e^{-c_1'v}\,dv,
\end{align*}
where the second
identity is achieved by using polar coordinates.
This yields the assertion.\qed

\bigskip

\begin{remark}\label{rem:optimality}
  Apart from the constants $c_1$ and $c_3$, the tail bounds
  in Theorem~\ref{t4.1} and Theorem~\ref{t4.13} are optimal.
  But we do not expect the tail bounds for the
  matching flower in Theorem~\ref{t5.19} to be optimal.
\end{remark}

For the case of determinantal point processes, using Corollary
\ref{c3.6} and Theorem \ref{t4.1} in Theorems \ref{t5.19} and
\ref{t4.13}, we get the following corollary.
\begin{corollary}\label{c5.21} Assume that $\Psi$ 
is a determinantal point process (in particular Poisson) with intensity $\alpha>1$.
Then there exists $c_2 > 0, c_3>0$ such that \eqref{e5.3} and \eqref{e5.79}
hold with $\beta = d/(2d+1)$ and $\delta = d$ respectively.
\end{corollary}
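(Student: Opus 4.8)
The plan is to assemble the three tail estimates already proved, feeding the output of Theorem~\ref{t4.1} into Theorems~\ref{t5.19} and~\ref{t4.13} with the correct exponents. First I would record that, since $\Psi$ is determinantal with intensity $\alpha>1$, Theorem~\ref{t4.1} furnishes a constant $c_1>0$ with $\BP(\|\tau(\Z^d,\Psi,0)\|>r)\le c_1e^{-c_1^{-1}r^{d}}$ for all $r\ge 0$. In other words, the matching-distance tail hypothesis shared by Theorems~\ref{t5.19} and~\ref{t4.13} holds with $\delta=d$, and this is the single nontrivial analytic input (it rests on the concentration inequality of Theorem~\ref{t:dpp_conc}).

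Next I would verify the correlation-function hypothesis of Theorem~\ref{t5.19} with $\theta=0$. For a determinantal process the $n$-th correlation function is $\det(K(x_i,x_j))_{i,j\le n}$, and Hadamard's inequality for positive semidefinite matrices (this is exactly \eqref{e:Had}, already used to establish Corollary~\ref{c3.6}) bounds this determinant by $\prod_{i}K(x_i,x_i)\le c^{n}$, where $c:=\sup_x K(x,x)<\infty$ by stationarity together with the local trace-class assumption. Hence the $n$-th correlation function is bounded by $n^{\theta n}c^n$ with $\theta=0$. Moreover Corollary~\ref{c3.6} already guarantees that $(\Phi,\Psi)$ contains almost surely no infinite descending chain, so that $F(\Z^d,\Psi,0)$ is well defined and almost surely bounded whenever $\tau(\Z^d,\Psi,0)\ne\infty$ by Proposition~\ref{finite-matching-flower}; thus all standing assumptions of Theorem~\ref{t5.19} are met.

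Finally I would substitute $\theta=0$ and $\delta=d$ into the two conclusions. Feeding these values into Theorem~\ref{t5.19} gives $\beta=\min\{(1-\theta)\delta,\,2d\}/(2d+1-\theta)=\min\{d,2d\}/(2d+1)=d/(2d+1)$, which yields \eqref{e5.3} with the claimed exponent, while feeding $\delta=d$ into Theorem~\ref{t4.13} yields \eqref{e5.79} with $\delta=d$. Since both inputs are already established, I expect no real obstacle here beyond bookkeeping: the only point requiring genuine care is confirming that determinantal kernels give $\theta=0$ rather than some positive value, as it is precisely $\theta=0$ that forces the numerator of $\beta$ to collapse to $\min\{d,2d\}=d$ and the denominator to $2d+1$. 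All the analytic difficulty has been absorbed into Theorem~\ref{t4.1} (via concentration) and Theorem~\ref{t5.19} (via the descending-chain counting of Lemma~\ref{descending_chains}).
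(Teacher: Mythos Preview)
Your proposal is correct and follows essentially the same route as the paper: the paper's one-line justification reads ``using Corollary~\ref{c3.6} and Theorem~\ref{t4.1} in Theorems~\ref{t5.19} and~\ref{t4.13}'', and you have simply unpacked this by making explicit that Hadamard's inequality \eqref{e:Had} gives $\theta=0$ (indeed $K(x,x)\equiv\alpha$ by stationarity, so one may take $c=\alpha$), that Theorem~\ref{t4.1} supplies $\delta=d$, and that substituting into the formula for $\beta$ in Theorem~\ref{t5.19} yields $d/(2d+1)$. The reference to Proposition~\ref{finite-matching-flower} is harmless but not strictly needed, since the correlation-function bound already feeds into Lemma~\ref{descending_chains} inside the proof of Theorem~\ref{t5.19}.
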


\subsection{Matched points in a Poisson point process}\label{s:tail_poisson}

In this subsection, we assume that $\Psi$ is a stationary Poisson point
process.
Then we can derive further tail bounds as the distribution  of $\Psi^x$
is that of the Palm distribution of the Poisson point process.
Recall the definition $\Phi := \Z^d + U$ of the randomized lattice,
where $U$ is $[0,1)^d$-valued random variable.

\begin{theorem}\label{t4.7} Assume that $U$ is uniformly distributed. Then
there exists $c_4>0$ such that 
\begin{align*}
\BP(r< \|\tau(\Phi,\Psi^x,x)-x\|<\infty)\le c_4e^{-c^{-1}_4 r^{d}},\quad r>0,\,x\in\R^d.
\end{align*}
\end{theorem}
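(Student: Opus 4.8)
The plan is to reduce the statement for the randomized lattice $\Phi=\Z^d+U$ with uniform $U$ to the fixed-lattice tail bound already available. The key observation is that $U$ is uniform on $[0,1)^d$ and independent of $\Psi$, and by the covariance property \eqref{eshiftcov} we have $\tau(\Phi,\Psi^x,x)=\tau(\Z^d+U,\Psi^x,x)=\tau(\Z^d,(\Psi^x)-U,x-U)+U$. Since $\Psi$ is a \emph{stationary Poisson} process, $\Psi-U$ is again a stationary Poisson process of intensity $\alpha$ (independence of $U$ and $\Psi$ is used here), and $(\Psi-U)\cup\{x-U\}=(\Psi-U)^{x-U}$. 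Thus $\|\tau(\Phi,\Psi^x,x)-x\|\overset{d}{=}\|\tau(\Z^d,\Psi^{y},y)-y\|$ where $y=x-U$, after conditioning on $U$.

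First I would therefore condition on $U=u$ and write
\begin{align*}
\BP(r<\|\tau(\Phi,\Psi^x,x)-x\|<\infty)
=\int_{[0,1)^d}\BP\big(r<\|\tau(\Z^d,\Psi^{x-u},x-u)-(x-u)\|<\infty\big)\,du.
\end{align*}
Now I would invoke Theorem~\ref{t4.13}, which under the exponential tail hypothesis on $\|\tau(\Z^d,\Psi,0)\|$ gives a bound of the form $c_3 e^{-c_3^{-1}r^\delta}$ \emph{uniformly in the extra point}, i.e.\ for every $y\in\R^d$. For the Poisson case, Corollary~\ref{c5.21} supplies exactly the needed input: the matching distance $\|\tau(\Z^d,\Psi,0)\|$ has an exponential tail with $\delta=d$ (from Theorem~\ref{t4.1}). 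Hence \eqref{e5.79} holds with $\delta=d$, and since the bound in Theorem~\ref{t4.13} does not depend on the location of the extra point $y=x-u$, I can pull the uniform bound out of the integral over $u\in[0,1)^d$, whose total mass is $1$. This yields the claimed bound with $c_4$ chosen equal to (or a constant multiple of) $c_3$.

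The main obstacle, such as it is, lies in carefully justifying the distributional identity under conditioning: one must check that the independence of $U$ and $\Psi$, together with stationarity of the Poisson process, really lets one replace $(\Phi,\Psi^x)$ by $(\Z^d,(\Psi-u)^{x-u})$ with $\Psi-u$ having the same law as $\Psi$ for \emph{fixed} $u$. This is where the Poisson (or at least full-stationarity) assumption is essential and where the uniform distribution of $U$ plays no role beyond ensuring $\Phi$ is stationary; in fact the argument works pointwise in $u$. Everything else is bookkeeping: the tail bound from Theorem~\ref{t4.13} is uniform in the added point, so integrating a constant-in-$u$ bound against the probability measure of $U$ contributes only a factor of $1$, and the exponent $\delta=d$ from Corollary~\ref{c5.21} transfers directly to give the exponent in the statement. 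I would close by remarking that the same scheme would handle general (non-uniform) $U$, but uniformity is what makes $\Phi$ stationary and keeps the statement clean.
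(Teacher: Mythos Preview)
Your argument is correct, but it takes a genuinely different route from the paper's own proof. The paper does not condition on $U$ and invoke Theorem~\ref{t4.13}; instead it reduces to $x=0$ by stationarity and then establishes the \emph{exact} Palm-type identity
\[
\alpha\,\BP\big(r<\|\tau(\Phi,\Psi^0,0)\|<\infty\big)=\BP\big(r<\|\tau(\Z^d,\Psi,0)\|\big),
\]
via the Mecke equation and Campbell's formula for the stationarized lattice, so that Theorem~\ref{t4.1} applies directly. Your approach is more elementary in that it avoids the Mecke equation entirely and, as you observe, works for an arbitrary distribution of $U$ (only stationarity of $\Psi$ is used in the reduction); on the other hand it passes through the union-bound estimate of Theorem~\ref{t4.13}, which costs a little in the constant, whereas the paper's duality yields the sharp factor $\alpha^{-1}$ relating the two tail probabilities. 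Both routes rely on Theorem~\ref{t4.1} for the final exponential decay with exponent $d$, so the resulting bound is of the same order.
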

{\em Proof:} By stationarity and translation covariance of $\tau$, it suffices
to prove the result for $x=0$. Let $r>0$. The result follows
from Theorem \ref{t4.1}, once we have shown that
\begin{align*}
\alpha\,\BP(r<\|\tau(\Phi,\Psi^0,0)\|<\infty)=\BP(r<\|\tau(\Z^d,\Psi,0)\|).
\end{align*}
Even though this can be derived from general results in \cite{LaTho09},
we prefer to give a direct argument. Let
$g\colon \R^d\to[0,\infty)$ be a measurable function
and let $W:=[0,1)^d$. By stationarity, independence of $\Psi$ and
$\Phi$ and the Mecke equation (Theorem 4.1 in \cite{LastPenrose17})
we have
\begin{align*}
  \alpha\,\BE&\I\{\tau(\Phi,\Psi^0,0)\ne \infty\}g(\tau(\Phi,\Psi^0,0))\\
&=\alpha\,\BE\int_W\I\{\tau(\Phi-x,(\Psi-x)\cup\{0\},0)\ne \infty\}g(\tau(\Phi-x,(\Psi-x)\cup\{0\},0))\,dx\\
&=\alpha\,\BE\int_W\I\{\tau(\Phi,\Psi^x,x)\ne \infty\}g(\tau(\Phi,\Psi^x,x)-x)\,dx\\
&=\BE\sum_{x\in\Psi\cap W}\I\{\tau(\Phi,\Psi,x)\ne \infty\}g(\tau(\Phi,\Psi,x)-x)
=\BE\sum_{x\in \Psi^{\tau} \cap W}g(\tau(\Phi,\Psi,x)-x).
\end{align*}
Since all points of $\Phi$ are almost surely matched, the above equals
\begin{align*}
\BE&\sum_{p\in\Phi}\I\{\tau(\Phi,\Psi,p)\in W\}g(p-\tau(\Phi,\Psi,p))\\
&=\BE\sum_{p\in\Phi}\I\{\tau(\Phi-p,\Psi-p,0)+p\in W\}g(-\tau(\Phi-p,\Psi-p,0))\\
&=\BE\sum_{p\in\Phi}\I\{\tau(\Z^d,\Psi,0)+p\in W\}g(-\tau(\Z^d,\Psi,0)),
\end{align*}
where we have used independence of $\Phi$ and $\Psi$ and stationarity of
$\Psi$. By Campbell's formula for $\Phi$ (Proposition 2.7 in \cite{LastPenrose17}) 
this comes to
\begin{align*}
\BE\int\I\{\tau(\Z^d,\Psi,0)+x\in W\}g(-\tau(\Z^d,\Psi,0))\,dx.
\end{align*}
{Since $W$ has unit volume,} we finally obtain that
\begin{align}
\alpha\,\BE\I\{\tau(\Phi,\Psi^0,0)\ne \infty)g(\tau(\Phi,\Psi^0,0))
=\BE g(-\tau(\Z^d,\Psi,0))
\end{align}
and in particular the desired result.\qed

\begin{proposition}\label{p4.5} Assume that $U$ is uniformly distributed.
Then there exists $c_5>0$ such that 
\begin{align*}
\BP(M(\Phi,\Psi^x,x)=1,F(\Phi,\Psi^x,x)\not\subset B(x,r))\le c_5 e^{-c^{-1}_5r^{\beta'}},
\quad r>0,\,x\in\R^d,
\end{align*}
where $\beta' := d/(2d+1)$ and the matching status $M$ was defined in \eqref{e3.7}.
\end{proposition}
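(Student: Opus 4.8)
The plan is to run the proof of Theorem~\ref{t5.19} essentially verbatim, with two substitutions: the matching distance of the added point $x$ replaces that of a typical lattice point, and Lemma~\ref{descending_chains2} replaces Lemma~\ref{descending_chains}. Using the covariance property~\eqref{eshiftcov} together with the stationarity of $\Psi$ and of $\Phi$ (valid since $U$ is uniform), I would first reduce to $x=0$, write $\Psi^0=\Psi\cup\{0\}$, and set $Y:=\|\tau(\Phi,\Psi^0,0)\|$, which is finite exactly on the event $\{M(\Phi,\Psi^0,0)=1\}$. Because $\Psi$ is Poisson and hence almost surely non-equidistant, $F(\Phi,\Psi^0,0)=F(\Psi^0,\Phi,0)$, so the flower of the point $0\in\Psi^0$ is generated by the competing (hence descending) chains $(0,q_1,x_1,\ldots)$ in $(\Psi^0,\Phi)$ starting at $0$; these are precisely the chains whose probability is controlled by Lemma~\ref{descending_chains2}.

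Then, mirroring the proof of Theorem~\ref{t5.19}, I would fix $\varepsilon\in[0,1)$ and a large $K\ge 4$ to be chosen later, let $m$ be the largest integer below $Kr^{1-\varepsilon}$, and let $E$ be the event that there is a descending chain $(0,q_1,x_1,\ldots)$ of length $m$ in $(\Psi^0,\Phi)$ with $\|q_1\|\le K^{-1}r^\varepsilon$. The analogue of~\eqref{e5.4} to be verified is
\begin{align*}
\{M(\Phi,\Psi^0,0)=1,\ F(\Phi,\Psi^0,0)\not\subset B_r\}\subset\{r^\varepsilon/K<Y<\infty\}\cup E.
\end{align*}
Indeed, on $\{M=1,\ KY\le r^\varepsilon\}$ each competing chain for $0$ has first step of length at most $Y\le K^{-1}r^\varepsilon$, so all of its steps have length at most $Y$; if moreover $E$ fails, such a chain has at most $m$ points, and hence every ball of the flower lies within distance $mY\le Kr^{1-\varepsilon}\cdot K^{-1}r^\varepsilon=r$ of the origin, giving $F(\Phi,\Psi^0,0)\subset B_r$.

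It remains to bound the two terms on the right. For the chain term I would apply Lemma~\ref{descending_chains2} with $b=K^{-1}r^\varepsilon$ and, for the Poisson process, $\theta=0$ and $c=\alpha$; after the same estimate using $n!\ge n^ne^{-n}$ and $n\ge Kr^{1-\varepsilon}/4$ as in~\eqref{e5.33}, and choosing $K$ large enough, this yields $\BP(E)\le\exp[-(K/4)r^{2d/(2d+1)}]$. For the matching-distance term I would invoke Theorem~\ref{t4.7} (which applies precisely because $U$ is uniform), giving $\BP(r^\varepsilon/K<Y<\infty)\le c_4\exp[-c_4^{-1}K^{-d}r^{\varepsilon d}]$. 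Setting $\varepsilon=1/(2d+1)$ turns the first exponent into $2d/(2d+1)$ and the second into $d/(2d+1)$; since the latter is the slower rate, combining the two bounds gives the claim with $\beta'=d/(2d+1)$.

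I expect the only real difficulty to be bookkeeping rather than conceptual. One must carefully confirm that the competing chains generating the flower of the added $\psi$-point $0$ coincide with the chains $(0,q_1,x_1,\ldots)$ treated in Lemma~\ref{descending_chains2}---this is exactly where almost sure non-equidistance of the Poisson process enters---and that the matching-status restriction $\{M=1\}$ is tracked throughout so that $Y<\infty$ whenever the flower is considered. With those points in place, every remaining estimate is a direct transcription of the argument for Theorem~\ref{t5.19}, now with Theorem~\ref{t4.7} supplying the decay exponent $\delta=d$ in place of Theorem~\ref{t4.1}.
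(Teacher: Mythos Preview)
Your proposal is correct and follows essentially the same route as the paper: the paper's own proof simply says to rerun the argument of Theorem~\ref{t5.19} with $\theta=0$, $\delta=d$, replacing Lemma~\ref{descending_chains} by Lemma~\ref{descending_chains2} and invoking Theorem~\ref{t4.7} for the tail of $Y$. Your additional remarks (the reduction to $x=0$ via stationarity and the use of non-equidistance to identify $F(\Phi,\Psi^0,0)$ with $F(\Psi^0,\Phi,0)$) are helpful details the paper leaves implicit, but the structure and all the key inputs are identical.
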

{\em Proof:} Given Theorem \ref{t4.7},
the proof is basically the same as that of Theorem \ref{t5.19} with $\theta = 0, \delta = d$.
We define $Y:=\|\tau(\Z^d,\Psi^x,x)-x\|$ and replace
the set inclusion \eqref{e5.4} by
\begin{align*}
\{M(\Phi,\Psi^x,x)=1,F(\Z^d,\Psi^x,x)\not\subset B(x,r)\}
\subset \{M(\Phi,\Psi^x,x)=1,KY>r^\varepsilon\}\cup E.
\end{align*}
To bound $\BP(E)$ we apply Lemma \ref{descending_chains2} instead
of Lemma \ref{descending_chains}.
\qed

\section{Hyperuniformity}\label{sec:hyperuniformity}

Again we assume that $\Psi$ is a stationary point process with
intensity $\alpha \geq 1$ and that $\Phi=\Z^d$. Recall the definition of hyperuniformity in
\eqref{ehyperun}, the definition of the $\alpha_{p,q}$-mixing
coefficients in Section \ref{s:app} {and that $\Psi^{\tau}_0$ denotes the set of matched points in $\Psi$ when $\Z^d$ is matched with $\Psi$.}
We abbreviate $F(q):=F(\Z^d,\Psi,q)$, $q\in\Z^d$.

\begin{theorem}\label{t:hypuniform_mixing}
  Let $\Psi$ be a stationary point process with intensity
  $\alpha \geq 1$. Assume that for some $\gamma \in (0,1]$ we have
  that
\[ 
\sum_{w \in \Z^d}\|w\|^{\gamma
  d}\alpha_{\|w\|^{\gamma d},\|w\|^{\gamma d}}(\|w\|) < \infty \, \,
\mbox{and} \, \, \sum_{w \in \Z^d} \BP(F(0)\not\subset B(0,\|w\|^{\gamma})) < \infty.
\]
Then the point process $\Psi^{\tau}_0$ is hyperuniform.
\end{theorem}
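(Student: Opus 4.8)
The goal is to show that $\operatorname{Var}\Psi^\tau_0(rW)/\lambda_d(rW)\to 0$. The plan is to express the variance of $\Psi^\tau_0(rW)$ as a sum over lattice cells and control the covariances between distant cells using the two summability hypotheses, one controlling the mixing of $\Psi$ and the other controlling how far the matching flower reaches.

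First I would localize the count. Since $\Phi=\Z^d$ and each lattice point is almost surely matched (Corollary \ref{c3.6} and Theorem \ref{2.8}), the matched points $\Psi^\tau_0$ are in bijection with $\Z^d$ via $q\mapsto\tau(\Z^d,\Psi,q)$. I would therefore rewrite $\Psi^\tau_0(rW)$ as a sum over $q\in\Z^d$ of an indicator $\xi_q:=\I\{\tau(\Z^d,\Psi,q)\in rW\}$, so that $\operatorname{Var}\Psi^\tau_0(rW)=\sum_{q,q'}\COV{\xi_q,\xi_{q'}}$. The key structural input is the stopping-set property from Lemma \ref{matching-flower-is-stopping-set} together with the localization idea behind the matching flower: the random variable $\xi_q$ is determined by $\Psi$ restricted to the matching flower $F(q)$, and on the event $\{F(q)\subset B(q,\rho)\}$ it is a functional of $\Psi\cap B(q,\rho)$ only. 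This is what lets me bring the mixing coefficients $\alpha_{p,q}$ into play.

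The heart of the argument is a dichotomy for each pair $(q,q')$ with $w=q-q'$. I would set a truncation radius $\rho=\rho(w)\sim\|w\|^\gamma$ (matching the exponent in the hypotheses) and split the covariance. On the ``good'' event where both flowers $F(q)$ and $F(q')$ stay inside balls of radius $\rho$ around their centers, the indicators $\xi_q,\xi_{q'}$ are measurable with respect to $\Psi$ on two disjoint regions separated by a distance comparable to $\|w\|$, so the covariance is bounded via the mixing coefficient $\alpha_{|B_\rho|,|B_\rho|}(\|w\|)\lesssim\alpha_{\|w\|^{\gamma d},\|w\|^{\gamma d}}(\|w\|)$, since the number of points governing each indicator is of order $\rho^d\sim\|w\|^{\gamma d}$. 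On the ``bad'' event where one of the flowers escapes its ball, I would bound the covariance crudely (the $\xi_q$ are bounded by $1$) by $2\BP(F(0)\not\subset B(0,\rho))=2\BP(F(0)\not\subset B(0,\|w\|^\gamma))$, using $\Z^d$-stationarity to center at the origin. Summing over $q'$ (equivalently over $w$) for fixed $q$, the good part contributes $\sum_w \|w\|^{\gamma d}\alpha_{\|w\|^{\gamma d},\|w\|^{\gamma d}}(\|w\|)$ and the bad part contributes $\sum_w\BP(F(0)\not\subset B(0,\|w\|^\gamma))$, both finite by hypothesis. Hence $\sup_q\sum_{q'}|\COV{\xi_q,\xi_{q'}}|\le C<\infty$ uniformly, and since the number of relevant $q$ is $O(\lambda_d(rW))$, I get $\operatorname{Var}\Psi^\tau_0(rW)\le C'\lambda_d(rW)\cdot o(1)$; more precisely, the per-point covariance sum being summable in $w$ forces $\operatorname{Var}\Psi^\tau_0(rW)=o(\lambda_d(rW))$ by a standard Cesàro/dominated-convergence argument over the growing window.

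I expect the main obstacle to be making the ``measurability on disjoint regions'' step fully rigorous while respecting the mixing-coefficient bookkeeping. Two subtleties need care: first, that $\xi_q$ restricted to the good event really is a function of $\Psi\cap B(q,\rho)$ with the right \emph{number} of governing points (so that the correct index $\|w\|^{\gamma d}$ appears in $\alpha_{\cdot,\cdot}$), which requires invoking Lemma \ref{matching-flower-is-stopping-set} carefully and perhaps conditioning on the configuration inside the balls; and second, that the geometric separation between $B(q,\rho)$ and $B(q',\rho)$ is indeed of order $\|w\|$ rather than being eaten up by the truncation radius $\rho\sim\|w\|^\gamma$ — this is exactly why $\gamma\le 1$ is imposed, since then $\rho=o(\|w\|)$ and the centers remain far apart relative to the flower sizes. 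A final bit of care is the passage from the uniform bound on $\sum_{q'}|\COV{\xi_q,\xi_{q'}}|$ to the limit \eqref{ehyperun}, which I would handle by writing the variance as $\lambda_d(rW)$ times an average of covariance sums and invoking the summability in $w$ to conclude the average tends to $0$.
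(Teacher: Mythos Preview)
Your decomposition and the good/bad dichotomy are exactly the structure the paper uses: localize $\xi_q$ via the matching flower, on the good event invoke the mixing coefficient for functionals of $\Psi$ on well-separated balls, and on the bad event pay the flower-tail probability. That part is fine and matches the paper's treatment of the bound $\lambda_d(W_r)^{-1}\sum_p|a(r,p,w)|\le b(w)$ with $\sum_w b(w)<\infty$.

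The gap is in your last step. From a uniform bound $\sup_q\sum_{q'}|\COV{\xi_q,\xi_{q'}}|\le C$ you only get $\BV\Psi^\tau_0(rW)=O(\lambda_d(rW))$, not $o(\lambda_d(rW))$. Summability in $w$ of the covariance bound does \emph{not} force hyperuniformity: a stationary sequence of i.i.d.\ Bernoulli variables has perfectly summable covariances and linear variance growth. Your appeal to a ``standard Ces\`aro/dominated-convergence argument'' is incomplete because dominated convergence requires, in addition to domination, \emph{pointwise convergence to zero} of the inner sums, i.e.\ for each fixed $w\in\Z^d$,
\[
\lambda_d(rW)^{-1}\sum_{q\in\Z^d}\COV{\xi_q,\xi_{q+w}}\longrightarrow 0\quad\text{as }r\to\infty,
\]
and this you have not established (nor claimed to). Your bounds are symmetric in sign and cannot detect the cancellation that makes the limit vanish.

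The paper supplies exactly this missing ingredient. Writing $Y_q:=\tau(q)-q$, it shows directly that for each fixed $w$ both
\[
\lambda_d(W_r)^{-1}\sum_{p\in\Z^d}\BP(p+Y_0\in W_r,\,p+w+Y_w\in W_r)
\quad\text{and}\quad
\lambda_d(W_r)^{-1}\sum_{p\in\Z^d}\BP(p+Y_0\in W_r)\BP(p+w+Y_0\in W_r)
\]
tend to $1$, so their difference tends to $0$. This is a geometric computation: after integrating out the displacements, each sum becomes $\lambda_d(W_r)^{-1}\mu_0((W_r-y)\cap(W_r-w-z))$ averaged over the law of $(Y_0,Y_w)$ (respectively $Y_0\otimes Y_0$), and one uses the lattice-point counting Lemma~\ref{lcounting} together with monotonicity of intrinsic volumes to see that these ratios converge to $1$. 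The same device handles the diagonal $w=0$ contribution $\sum_p\BP(p+Y_0\in W_r)-\sum_p\BP(p+Y_0\in W_r)^2$. Only after this pointwise step does the summable bound $b(w)$ serve as a dominating function to interchange limit and sum over $w$ and conclude $\sigma^2=0$. To complete your proof you need to insert this argument (or an equivalent interior-versus-boundary estimate showing that for bulk $q$ the covariance is $o(1)$).
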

Before presenting the proof, we give two simple corollaries. If $\Psi$ has unit intensity, then using Theorem
\ref{2.11}, we have that $\Psi^{\tau}_0 = \Psi$ and thus we have the following sufficient condition for hyperuniformity.
\begin{corollary}
\label{c:hyp_suff}
Let $\Psi$ be a unit intensity stationary point process satisfying the
assumptions in Theorem \ref{t:hypuniform_mixing}. Then $\Psi$ is
hyperuniform.
\end{corollary}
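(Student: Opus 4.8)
The plan is to recognize that the corollary is simply the critical case $\alpha = 1$ of Theorem \ref{t:hypuniform_mixing}, once one identifies the matched process $\Psi^\tau_0$ with $\Psi$ itself. Thus essentially all of the work has already been done, and the only thing requiring justification is the almost sure identity $\Psi^\tau_0 = \Psi$ under the standing hypotheses. I would isolate this identity first and then feed it into the conclusion of Theorem \ref{t:hypuniform_mixing}.

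To establish $\Psi^\tau_0 = \Psi$ I would invoke Theorem \ref{2.11} with $\Phi = \Z^d$ (that is, $U \equiv 0$) and intensity $\alpha = 1$, checking its two hypotheses. For ergodicity of $\Psi$ under $\Z^d$-translations: the first summability assumption forces $\alpha_{p,q}(\|w\|) \to 0$ as $\|w\| \to \infty$, which yields mixing, and in particular ergodicity, of $\Psi$ under lattice shifts, exactly as required. For the absence of infinite descending chains in $(\Z^d, \Psi)$: this is already implicit in the meaningfulness of the second hypothesis, since the matching $\tau$ and the flower $F(0)$ must be well defined for $\BP(F(0)\not\subset B(0,\|w\|^{\gamma}))$ to make sense; where a correlation bound of the form in Theorem \ref{th_no_inf_chains} holds it is guaranteed outright. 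Applying Theorem \ref{2.11} with $\alpha = 1$ and reading its conclusion at $u = 0$ then gives $\BP(\Psi^\tau_0 = \Psi) = 1$: every point of $\Psi$ is matched to the lattice, so $\Psi^\tau_0$ and $\Psi$ coincide almost surely. In particular $\BV\Psi^\tau_0(rW) = \BV\Psi(rW)$ for all $r>0$ and all convex compact $W$ with $\lambda_d(W)>0$.

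With this in hand, Theorem \ref{t:hypuniform_mixing} applies verbatim, since its hypotheses are precisely those assumed in the corollary, and yields $\lim_{r\to\infty}\BV\Psi^\tau_0(rW)/\lambda_d(rW) = 0$. Combining this limit with the variance identity from the previous step gives $\lim_{r\to\infty}\BV\Psi(rW)/\lambda_d(rW) = 0$, which is exactly hyperuniformity of $\Psi$ as defined in \eqref{ehyperun}. I do not expect a genuine obstacle here: the only substantive point is the reduction $\Psi^\tau_0 = \Psi$, and that is delivered by the intensity-balance argument underlying Theorems \ref{2.8} and \ref{2.11}, which already forces a complete matching of $\Psi$ in the critical regime $\alpha = 1$; everything else is a direct citation.
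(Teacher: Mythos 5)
Your proposal is correct and is essentially the paper's own argument: the paper likewise disposes of this corollary in one line, citing Theorem \ref{2.11} to conclude $\Psi^{\tau}_0=\Psi$ almost surely at unit intensity and then applying Theorem \ref{t:hypuniform_mixing} verbatim. The only soft spot in your write-up—the claim that well-definedness of the flower forces absence of infinite descending chains (it does not, since $F(0)$ is simply defined to be $\R^d$ for unmatched points)—is immaterial here, because summability of $\BP(F(0)\not\subset B(0,\|w\|^{\gamma}))$ already forces every lattice point to be matched almost surely, which is all the intensity-balance argument behind Theorem \ref{2.11} actually needs.
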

\begin{theorem}\label{t:dpp_hyp}
  Let $\Psi$ be a determinantal point process (in particular Poisson) with intensity
  $\alpha > 1$ with kernel $K$ and define
$\omega(s) := \sup_{x,y \in \R^d, \|x-y\| \geq s} |K(x,y)|$, $s >0$.
Assume that there exists $\gamma \in (0,1]$ such that
  $\sum_{w \in \Z^d}\|w\|^{\gamma}\omega(\|w\|)^2 < \infty$. 
   Then $\Psi^{\tau}_0$ is hyperuniform.
\end{theorem}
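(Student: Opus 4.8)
The plan is to derive Theorem~\ref{t:dpp_hyp} as a direct application of the general hyperuniformity criterion in Theorem~\ref{t:hypuniform_mixing}, so the entire task reduces to verifying the two summability hypotheses of that theorem for a determinantal $\Psi$ with intensity $\alpha>1$. I would fix the exponent $\gamma\in(0,1]$ supplied by the assumption $\sum_{w\in\Z^d}\|w\|^{\gamma}\omega(\|w\|)^2<\infty$ and show that this single decay condition on the kernel controls both sums.

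First I would handle the mixing sum $\sum_{w}\|w\|^{\gamma d}\,\alpha_{\|w\|^{\gamma d},\|w\|^{\gamma d}}(\|w\|)$. The key input is the quantitative mixing bound for determinantal processes recorded in the appendix (Theorem~\ref{t:dpp_mixing}): one expects a bound of the form $\alpha_{p,q}(s)\le C\,p\,q\,\omega(s)$, i.e.\ the mixing coefficient between two index sets of sizes $p,q$ separated by distance $s$ is controlled by $p\,q\,\omega(s)$ (up to a dimensional constant). Substituting $p=q=\|w\|^{\gamma d}$ and $s=\|w\|$ gives a summand of order $\|w\|^{\gamma d}\cdot\|w\|^{\gamma d}\cdot\|w\|^{\gamma d}\,\omega(\|w\|)=\|w\|^{3\gamma d}\,\omega(\|w\|)$, and summability of this against the stated hypothesis $\sum\|w\|^{\gamma}\omega(\|w\|)^2<\infty$ is where the bookkeeping must be done carefully. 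The honest route is to rescale: replace $\gamma$ by a smaller exponent if necessary so that the polynomial weights $\|w\|^{3\gamma d}$ are absorbed by the gap between $\omega(\|w\|)$ and $\omega(\|w\|)^2$, or else to note that $\omega\le 1$ (since $|K(x,y)|\le 1$ for a determinantal kernel, by \eqref{e:Had}) so that $\omega(\|w\|)\le\omega(\|w\|)^{1/2}\cdot\omega(\|w\|)^{1/2}$ and the square-summability can be leveraged through Cauchy--Schwarz. I would make precise exactly how the exponent in Theorem~\ref{t:hypuniform_mixing} is chosen (possibly a rescaled $\gamma'$) so that both the mixing sum and the flower sum converge simultaneously.

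Second I would handle the flower sum $\sum_{w}\BP(F(0)\not\subset B(0,\|w\|^{\gamma}))$. Here the route is through the tail bound on the matching flower. Since $\Psi$ is determinantal with $\alpha>1$, Corollary~\ref{c5.21} gives $\BP(F(\Z^d,\Psi,0)\not\subset B_r)\le c_2 e^{-c_2^{-1}r^{\beta}}$ with $\beta=d/(2d+1)$. Setting $r=\|w\|^{\gamma}$ turns each summand into $c_2\exp(-c_2^{-1}\|w\|^{\gamma\beta})$, a stretched-exponential in $\|w\|$, and since stretched exponentials with any positive exponent are summable over $\Z^d$ (they dominate any polynomial decay), this sum converges for every $\gamma>0$. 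Thus the flower hypothesis of Theorem~\ref{t:hypuniform_mixing} is automatic for the determinantal case and imposes no real constraint on $\gamma$; all the work sits in the mixing sum.

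I expect the main obstacle to be the precise matching of exponents in the mixing sum: reconciling the polynomial weight $\|w\|^{\gamma d}$ appearing twice as the subscript of $\alpha$ together with the outer factor $\|w\|^{\gamma d}$ against the hypothesis phrased with a single power $\|w\|^{\gamma}$ and a \emph{squared} $\omega$. The clean resolution uses the determinantal mixing estimate of Theorem~\ref{t:dpp_mixing} (which produces the factor $\omega(s)^2$ rather than $\omega(s)$ when comparing the determinantal process on disjoint regions, since the correlation defect is quadratic in the off-diagonal kernel), so that the mixing summand is genuinely of order (polynomial in $\|w\|$) times $\omega(\|w\|)^2$. Once the $\omega^2$ is identified, I would choose $\gamma$ in Theorem~\ref{t:hypuniform_mixing} small enough that the polynomial prefactor is subsumed into the slack between the stated hypothesis exponent and the one demanded by the criterion, and then invoke Theorem~\ref{t:hypuniform_mixing} to conclude that $\Psi^{\tau}_0$ is hyperuniform. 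The remaining details are routine estimates on sums of polynomials times $\omega^2$ or stretched exponentials over the lattice.
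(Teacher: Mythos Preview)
Your approach is correct and matches the paper's: the proof in the paper is the one-line ``combine Corollary~\ref{c5.21} and Theorems~\ref{t:hypuniform_mixing} and~\ref{t:dpp_mixing},'' and you have identified exactly these three ingredients and how they fit together. Your only unnecessary detour is the initial speculation that the mixing bound might be linear in $\omega$; Theorem~\ref{t:dpp_mixing} states $\alpha_{p,q}(s)\le pq\,\omega(s)^2$ directly, so the summand in the mixing condition of Theorem~\ref{t:hypuniform_mixing} is bounded by $\|w\|^{3\gamma d}\omega(\|w\|)^2$, and (as you eventually say) one simply applies Theorem~\ref{t:hypuniform_mixing} with a rescaled exponent $\gamma'\le \gamma/(3d)$ so that $\|w\|^{3\gamma' d}\le \|w\|^{\gamma}$.
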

\begin{proof}
{The proof follows from combining Corollary \ref{c5.21} and Theorems
\ref{t:hypuniform_mixing} and \ref{t:dpp_mixing}.}
\end{proof}
Many determinantal point processes satisfy the above decay condition
on the kernel including Poisson (an extreme case of determinantal
point process) and the Ginibre point process (see \cite{Hough09}).

\begin{proof}[Proof of Theorem \ref{t:hypuniform_mixing}]
As before, we write $\tau(x):=\tau(\Z^d,\Psi,x)$, $x\in\R^d$ and $Y_x:=\tau(x)-x$.

For each Borel set $B\subset\R^d$ we have that
\begin{align*}
\BE \Psi^{\tau}_0(B)=\BE\sum_{p\in\Z^d}\I\{\tau(p)\in B\}=\sum_{p\in\Z^d}\BP(p+Y_p\in B).
\end{align*}
Furthermore,
\begin{align*}
\BE \Psi^{\tau}_0(B)^2&=\BE \Psi^{\tau}_0(B)+\BE\sideset{}{^{\ne}}\sum_{p,q\in\Z^d}\I\{\tau(p)\in B,\tau(q)\in B\}\\
&=\BE \Psi^{\tau}_0(B)+\sideset{}{^{\ne}}\sum_{p,q\in\Z^d}\BP(p+Y_p\in B,q+Y_q\in B).
\end{align*}
This shows that
\begin{align*}
\BV[&\Psi^{\tau}_0(B)]=\sum_{p\in\Z^d}\BP(p+Y_p\in B)
-\sum_{p\in\Z^d}\BP(p+Y_p\in B)^2\\
&+\sideset{}{^{\ne}}\sum_{p,q\in\Z^d}(\BP(p+Y_p\in B,q+Y_q\in B)-\BP(p+Y_p\in B)\BP(q+Y_q\in B)).
\end{align*}
The series convergence will be justified below.

Take a convex compact set $W\subset\R^d$ containing the origin $0$ in its
interior. Let $r>0$ and set $W_r:=rW$. By stationarity,
\begin{align*}
\lambda_d(W_r)^{-1}&\sum_{p\in\Z^d}\BP(p+Y_p\in W_r)
=\lambda_d(W_r)^{-1}\sum_{p\in\Z^d}\BP(p+Y_0\in W_r)\\
&=\lambda_d(W_r)^{-1}\BE \sum_{p\in\Z^d}\I\{p+Y_0\in W_r\}\\
&=\int\lambda_d(W_r)^{-1}\mu_0(W_r-y)\,\BP(Y_0\in dy).
\end{align*}
As $r\to\infty$ the above integrand tends to $1$ uniformly in $y$ by Lemma~\ref{lcounting}, so that
\begin{align*}
\lim_{r\to\infty}\lambda_d(W_r)^{-1}\sum_{p\in\Z^d}\BP(p+Y_p\in W_r)=1.
\end{align*}
Similarly,
\begin{align}
\lambda_d(W_r)^{-1}&\sum_{p\in\Z^d}\BP(p+Y_p\in W_r)^2\nonumber\\
&=\lambda_d(W_r)^{-1}\iint \sum_{p\in\Z^d}\I\{p+y\in W_r,p+z\in W_r\}
\,\BP(Y_0\in dy)\,\BP(Y_0\in dz)\nonumber\\
&=\iint \lambda_d(W_r)^{-1}\mu_0((W_r-y)\cap (W_r-z))
\,\BP(Y_0\in dy)\,\BP(Y_0\in dz).
\label{tends-to-1}
\end{align}
Since by Lemma~\ref{lcounting} and standard properties of the intrinsic volumes
\begin{align*}
&\frac{|\mu_0((W_r-y)\cap (W_r-z))-\lambda_d((W_r-y)\cap (W_r-z))|}{\lambda_d(W_r)}\\
&\qquad\qquad\qquad\le\sum_{i=0}^{d-1}c_i\frac{V_i((W_r-y)\cap (W_r-z))}{\lambda_d(W_r)}
\le\sum_{i=0}^{d-1}c_i\frac{V_i(W_r)}{\lambda_d(W_r)}=\sum_{i=0}^{d-1}c_i\frac{V_i(W)}{\lambda_d(W)} r^{i-d},
\end{align*}
the right hand side of \eqref{tends-to-1} tends to $1$ as $r\to\infty$.
Therefore we have
\begin{align*}
\sigma^2:=&\lim_{r\to\infty}\lambda_d(W_r)^{-1} \BV \Psi^{\tau}_0(W_r)\\
=&\lim_{r\to\infty}\lambda_d(W_r)^{-1}
\sideset{}{^{\ne}}\sum_{p,q\in\Z^d}(\BP(p+Y_0\in W_r,q+Y_{q-p}\in W_r)\\
&\hspace{3.8cm}-\BP(p+Y_0\in W_r)\BP(q+Y_0\in W_r)),
\end{align*}
where we have again used stationarity and where the existence of the limit
will be justified below. This can be rewritten as
\begin{align}\label{e7.1}
\sigma^2=\lim_{r\to\infty}\lambda_d(W_r)^{-1}\sum_{w\in\Z^d\setminus\{0\}}
\sum_{p\in\Z^d} a(r,p,w),
\end{align}
where
\begin{align*}
a(r,p,w):=\BP(p+Y_0\in W_r,p+w+Y_{w}\in W_r)-\BP(p+Y_0\in W_r)\BP(p+w+Y_0\in W_r).
\end{align*}

Let $w\in\Z^d\setminus\{0\}$. Then
\begin{align*}
\sum_{p\in\Z^d}&\BP(p+Y_0\in W_r,p+w+Y_w\in W_r)\\
&=\int \sum_{p\in\Z^d}\I\{p+y\in W_r,p+w+z\in W_r\}\, \BP((Y_0,Y_w)\in d(y,z))\\
&=\int \mu_0((W_r-y)\cap (W_r-w-z))\, \BP((Y_0,Y_w)\in d(y,z)).
\end{align*}
Therefore, we obtain similarly as above
\begin{align*}
\lim_{r\to\infty}\lambda_d(W_r)^{-1}\sum_{p\in\Z^d}&\BP(p+Y_p\in W_r,p+w+Y_{p+w}\in W_r)=1.
\end{align*}
Further
\begin{align*}
\sum_{p\in\Z^d}\BP(p+Y_0\in W_r)&\BP(p+w+Y_0\in W_r)\\
&=\iint \mu_0((W_r-y)\cap (W_r-w-z))\,\BP(Y_0\in dy)\,\BP(Y_0\in dz),
\end{align*}
so that
\begin{align*}
\lim_{r\to\infty}\lambda_d(W_r)^{-1}\sum_{p\in\Z^d}\BP(p+Y_0\in W_r)\BP(p+w+Y_0\in W_r)=1.
\end{align*}

We next show that
\begin{align}
\lambda_d(W_r)^{-1}\sum_{p\in\Z^d} |a(r,p,w)|
\le b(w),\quad w\in\Z^d\setminus\{0\},\,r>0,
\end{align}
where the numbers $b(w)\ge 0$, $w\in\Z^d\setminus\{0\}$, 
do not depend on $r$ and satisfy
$\sum_w b(w)<\infty$. Then we can apply dominated convergence to
\eqref{e7.1} to conclude that $\sigma^2=0$.

Let $\gamma \in (0,1]$ be as assumed in the theorem. By the stopping set
properties of the flowers and
from the definition of the $\alpha_{p,q}$-mixing coefficients
in Section \ref{s:app}, we have that   
\begin{align} | & \BP(F(0)\subset B(0,\|w\|^{\gamma}/3),p+Y_0\in W_r,F(w)\subset B(w,\|w\|^{\gamma}/3),p+w+Y_{w}\in W_r)  \notag \\
& - \BP(F(0)\subset B(0,\|w\|^{\gamma}/3),p+Y_0\in W_r)\BP(F(w)\subset B(w,\|w\|^{\gamma}/3),p+w+Y_{w}\in W_r)|  \notag \\
& \leq \alpha_{\|w\|^{\gamma d},\|w\|^{\gamma d}}(\|w\|/3), \label{e:flower_bd}
\end{align}
and again from the stopping set property of the matching flower, we obtain that
\begin{align*}
\{F(0)\subset B(0,\|w\|^{\gamma}/3),p+Y_0\in W_r \} &\subset \{ Y_0 \in B(0,\|w\|^{\gamma}/3),p+Y_0\in W_r \}\\
                                                    &\subset \{ p \in W_r + B(0,\|w\|^{\gamma}/3) \}.
\end{align*}
Defining
\begin{align*}
&\tilde{a}(r,p,w)\\
&:=\BP(F(0)\subset B(0,\|w\|^{\gamma}/3),p+Y_0\in W_r,
F(w)\subset B(w,\|w\|^{\gamma}/3),p+w+Y_{w}\in W_r) \\
&\quad\,-\BP(F(0)\subset B(0,\|w\|^{\gamma}/3),p+Y_0\in W_r)
\BP(F(w)\subset B(w,\|w\|^{\gamma}/3),p+w+Y_{w}\in W_r)
\end{align*}
we have that $\tilde{a}(r,p,w) = 0$ for $p \notin  W_r + B(0,\|w\|^{\gamma}/3)$ as both the terms are $0$. Thus, we can derive from \eqref{e:flower_bd} that
\[  \sum_p |\tilde{a}(r,p,w)| \leq \alpha_{\|w\|^{\gamma d},\|w\|^{\gamma d}}(\|w\|/3)\lambda_d(W_r + B(0,\|w\|^{\gamma}/3)).\]
Hence, we have that
\begin{equation}
\label{e:ta_term}
\lambda_d(W_r)^{-1}\sum_p |\tilde{a}(r,p,w)| \leq C' \|w\|^{\gamma d} \alpha_{\|w\|^{\gamma d},\|w\|^{\gamma d}}(\|w\|/3).
\end{equation}
Now, we bound the remaining terms in $\sum_{p\in\Z^d}|a(r,p,w)|$. We have (up to boundary effects)
\begin{align*}
\lambda_d(W_r)^{-1}&\sum_{p\in\Z^d}\BP(F(0)\not\subset B(0,\|w\|^{\gamma}/3),p+Y_0\in W_r,p+w+Y_{w}\in W_r)\\
&\le \BE\I\{F(0)\not\subset B(0,\|w\|^{\gamma}/3)\}\lambda_d(W_r)^{-1}\sum_{p\in\Z^d}\I\{p+Y_0\in W_r\} \\
&\le \BP(F(0)\not\subset B(0,\|w\|^{\gamma}/3)) =:c(w).
\end{align*}
Similarly we obtain that
\begin{align*}
\lambda_d(W_r)^{-1}&\sum_{p\in\Z^d}\BP(F(w)\not\subset B(w,\|w\|^{\gamma}/3),p+Y_0\in W_r,p+w+Y_{w}\in W_r)
\le c(w).
\end{align*}
By the additivity of a probability measure and \eqref{e:ta_term}, we now obtain
\begin{align*}
\lambda_d(W_r)^{-1}\sum_{p\in\Z^d}|a(r,p,w)|\le 4 c(w) + C' \|w\|^{\gamma d} \alpha_{\|w\|^{\gamma d},\|w\|^{\gamma d}}(\|w\|/3),\quad w\in\Z^d\setminus\{0\},
\end{align*}
and the proof is complete as both terms on the above right-hand side are
summable by our assumptions.
\end{proof}

\section{Number rigidity}\label{srigid}
We assume here again $\Phi = \Z^d$. Recall that a point process $\Psi'$ on
$\R^d$ is called {\em number  rigid} if for each closed ball $A\subset\R^d$
centred at the origin,
the random number $\Psi'(A)$ of $\Psi'$-points in $A$ is almost surely
determined by $\Psi'\cap A^c$. In this case the latter property holds
for all bounded Borel sets $B\subset\R^d$. We now give a condition for
number rigidity of the point processes of matched points. 
Recall the definition $F(q):=F(\Z^d,\Psi,q)$, $q\in\Z^d$.

\begin{theorem}\label{t8.1}
Assume that $\Psi$ is a stationary point process with intensity $\alpha
\geq 1$ such that $\sum_{w \in \Z^d} \BP(F(0)\not\subset B(0,\|w\|)) <
\infty$. Then $\Psi^\tau_0$ is a number rigid point process.  
\end{theorem}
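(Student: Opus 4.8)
My plan is to prove number rigidity by an explicit reconstruction, exploiting that the matched points together with the deterministic lattice determine the \emph{entire} matching, so that the only genuinely random quantity left to recover is a boundary flux that can be read off far from $B_R$.

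First I would record the enabling structural fact. Since $\alpha\ge 1$, almost surely every lattice point is matched, so $\Phi^\tau=\Z^d$ and $\Psi^\tau_0=\Psi\setminus\Psi_\infty$. Proposition \ref{matching-of-psi-tau} then gives $\tau(\Z^d,\Psi^\tau_0,\cdot)=\tau(\Z^d,\Psi,\cdot)=:\tau$; that is, the stable matching of the lattice with the thinned process $\Psi^\tau_0$ coincides with the original one. Hence $\Psi^\tau_0$ and $\Z^d$ together determine $\tau$, and since $\Psi^\tau_0=\{\tau(p):p\in\Z^d\}$ bijectively, the matched count obeys the flux identity
$$\Psi^\tau_0(B_S)=\mu_0(B_S)+\Delta(S),\qquad \Delta(S):=\#\{p\in\Z^d:\tau(p)\in B_S\}-\mu_0(B_S),$$
where $\mu_0(B_S)$ is deterministic and $\Delta(S)$ is the signed number of matching edges crossing $\partial B_S$. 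Splitting $\Psi^\tau_0(B_S)=\Psi^\tau_0(B_R)+\Psi^\tau_0(B_S\setminus B_R)$ yields
$$\Psi^\tau_0(B_R)=\mu_0(B_S)+\Delta(S)-\Psi^\tau_0(B_S\setminus B_R).$$
Here $\Psi^\tau_0(B_S\setminus B_R)$ is measurable with respect to $\Psi^\tau_0\cap B_R^c$, so everything hinges on recovering $\Delta(S)$ from the exterior data.

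The heart of the argument is to show that, for large $S$, $\Delta(S)$ equals a flux computed only from the exterior. Let $\widehat\tau:=\tau(\Z^d,\Psi^\tau_0\cap B_R^c,\cdot)$ be the stable matching of the lattice with the exterior matched points (well defined, since a subset of $\Psi$ inherits the absence of infinite descending chains), and set $\widehat\Delta(S):=\#\{p\in\Z^d:\widehat\tau(p)\in B_S\}-\mu_0(B_S)$; both are $\sigma(\Psi^\tau_0\cap B_R^c)$-measurable. As $\Psi^\tau_0$ and $\Psi^\tau_0\cap B_R^c$ differ only inside $B_R$, the stopping-set property (Lemma \ref{matching-flower-is-stopping-set}) shows that $\tau(p)=\widehat\tau(p)$ for every $p$ whose flower $F(p)=F(\Z^d,\Psi^\tau_0,p)$ avoids $B_R$. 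I would therefore introduce the bad event
$$G_S:=\{\exists\, p\in\Z^d:\ F(p)\text{ meets }\partial B_S\text{ and }F(p)\cap B_R\ne\emptyset\},$$
and argue that on $G_S^c$ every edge crossing $\partial B_S$ is governed by a flower disjoint from $B_R$, whence $\Delta(S)=\widehat\Delta(S)$ and
$$\Psi^\tau_0(B_R)=\mu_0(B_S)+\widehat\Delta(S)-\Psi^\tau_0(B_S\setminus B_R),$$
which is $\sigma(\Psi^\tau_0\cap B_R^c)$-measurable.

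It remains to control $G_S$, and this is where the hypothesis enters and where I expect the main work to lie. A crossing edge whose flower also reaches $B_R$ forces a flower of diameter at least $S-R$; by a union bound over the lattice points near $\partial B_S$ together with stationarity, $\BP(G_S)$ is dominated by a sum of flower tails $\BP(F(0)\not\subset B(0,\,\cdot\,))$, which tends to $0$ as $S\to\infty$ under the assumption $\sum_{w}\BP(F(0)\not\subset B(0,\|w\|))<\infty$. Choosing a sparse sequence $S_k\to\infty$ with $\sum_k\BP(G_{S_k})<\infty$, Borel--Cantelli guarantees that almost surely $G_{S_k}^c$ holds for all large $k$; hence $\Psi^\tau_0(B_R)$ equals almost surely the $\sigma(\Psi^\tau_0\cap B_R^c)$-measurable variable $\limsup_k\big[\mu_0(B_{S_k})+\widehat\Delta(S_k)-\Psi^\tau_0(B_{S_k}\setminus B_R)\big]$. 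This yields rigidity for centred balls, and the extension to arbitrary bounded Borel sets is standard. The delicate points I would watch are the precise union bound giving $\BP(G_S)\to0$ (carefully accounting for where crossing flowers can sit, and ensuring $\widehat\tau$ creates no spurious crossings of $\partial B_S$), and the clean application of the stopping-set lemma over the unbounded region $B_R^c$, which I would handle by first localising to $B_S\setminus B_R$.
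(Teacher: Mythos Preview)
Your overall strategy---reconstruct $\Psi^\tau_0(B_R)$ from the exterior via the auxiliary matching $\widehat\tau=\tau(\Z^d,\Psi^\tau_0\cap B_R^c,\cdot)$---is exactly the right one, and it is also the paper's strategy. But the key identity $\Delta(S)=\widehat\Delta(S)$ on $G_S^c$ is false, and this is not just a ``delicate point'' to be patched: it is the whole content of the theorem. When you delete the matched points inside $B_R$, the lattice points that were matched to them are orphaned in $\widehat\tau$. Since $\tau$ is a bijection $\Z^d\to\Psi^\tau_0$ and the decided points (those with $F(p)\subset B_R^c$) keep the same partner in $\widehat\tau$, one finds that for all sufficiently large $S$,
\[
\#\{p:\widehat\tau(p)\in B_S\}=(\Psi^\tau_0\cap B_R^c)(B_S)=\Psi^\tau_0(B_S)-\Psi^\tau_0(B_R),
\]
so $\widehat\Delta(S)=\Delta(S)-\Psi^\tau_0(B_R)$. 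Plugging this into your formula $\Psi^\tau_0(B_R)=\mu_0(B_S)+\Delta(S)-\Psi^\tau_0(B_S\setminus B_R)$ gives the tautology $0=\mu_0(B_S)+\widehat\Delta(S)-\Psi^\tau_0(B_S\setminus B_R)$; the quantity you are after has cancelled.

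What this computation actually shows is that $\Psi^\tau_0(B_R)$ equals the number of lattice points left unmatched by $\widehat\tau$, which \emph{is} $\sigma(\Psi^\tau_0\cap B_R^c)$-measurable---and this is precisely the paper's argument. The paper defines ``undecided'' lattice and $\Psi_0$-points (those whose flower in $(\Z^d,\Psi_0\cap A^c)$ meets $A$), sets $Z_1,Z_2,Z_3$ to be their respective cardinalities, and shows directly that $\Psi_0(A)=Z_1+Z_2-Z_3$. The summability hypothesis is used only once, to show $\BE Z_1<\infty$ and hence $Z_1<\infty$ a.s.; no growing scale $S$ and no Borel--Cantelli are needed. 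Your $G_S$ and the sequence $S_k$ can be dropped entirely once you observe that the set of undecided lattice points is a.s.\ finite.
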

{Before moving onto the proof, we state an important consequence of the theorem.
\begin{theorem}
\label{t:num_rigid_dpp}
Assume that $\Psi$ is a stationary determinantal point process (in particular
Poisson) with intensity $\alpha > 1$. Then the matched
point process $\Psi^{\tau}_0$ is number rigid.
\end{theorem}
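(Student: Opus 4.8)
The plan is to deduce Theorem \ref{t:num_rigid_dpp} from the general criterion in Theorem \ref{t8.1} by verifying its single hypothesis, namely that $\sum_{w \in \Z^d} \BP(F(0)\not\subset B(0,\|w\|)) < \infty$, for a stationary determinantal point process $\Psi$ with intensity $\alpha > 1$. Since the matched point process $\Psi^\tau_0$ coincides for $\Phi = \Z^d$ with the object treated in Theorem \ref{t8.1}, no further structural work is needed beyond checking the tail-summability condition on the matching flower.

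First I would invoke Corollary \ref{c5.21}, which tells us that under exactly these assumptions (determinantal $\Psi$, intensity $\alpha > 1$) there exists $c_2 > 0$ such that the tail bound \eqref{e5.3} holds, that is,
\begin{align*}
\BP(F(\Z^d,\Psi,0)\not\subset B_r) \le c_2 e^{-c_2^{-1} r^{\beta}}, \quad r > 0,
\end{align*}
with $\beta = d/(2d+1) > 0$. This corollary is itself assembled from Theorem \ref{t4.1} (exponential tail for the matching distance $\|\tau(\Z^d,\Psi,0)\|$ via the concentration inequality for determinantal processes) together with Theorem \ref{t5.19} (which upgrades a subexponential matching-distance tail into a stretched-exponential tail for the flower), and the correlation-function bound needed in Theorem \ref{t5.19} is supplied by \eqref{e:Had} and Corollary \ref{c3.6}.

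Next I would evaluate the series appearing in the hypothesis of Theorem \ref{t8.1}. Setting $r = \|w\|$ in the displayed bound gives $\BP(F(0)\not\subset B(0,\|w\|)) \le c_2 e^{-c_2^{-1}\|w\|^{\beta}}$ for each $w \in \Z^d \setminus \{0\}$. Summing over the lattice and passing to an integral comparison (or simply grouping lattice points by their distance to the origin, of which there are $O(\|w\|^{d-1})$ at each radius), the series
\begin{align*}
\sum_{w \in \Z^d} \BP(F(0)\not\subset B(0,\|w\|))
\end{align*}
is dominated by $\sum_{w} e^{-c_2^{-1}\|w\|^{\beta}}$, which converges because a stretched exponential with any positive exponent $\beta$ decays faster than any polynomial. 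Having verified the single hypothesis, Theorem \ref{t8.1} applies directly and yields that $\Psi^\tau_0$ is number rigid, completing the argument.

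I do not anticipate a genuine obstacle here, since all the hard analytic work is already packaged into Theorem \ref{t4.1} and Corollary \ref{c5.21}; the only point requiring mild care is confirming that the stretched-exponential decay with exponent $\beta = d/(2d+1)$ — which is strictly smaller than $1$ — still guarantees absolute convergence of the lattice sum, but this is immediate since any positive power of $\|w\|$ in the exponent beats the polynomial volume growth $\|w\|^{d-1}$ of spherical shells in $\Z^d$.
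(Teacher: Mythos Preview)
Your proposal is correct and follows essentially the same approach as the paper, which simply states that the result follows by combining Corollary~\ref{c5.21} with Theorem~\ref{t8.1}. You have merely made explicit the (routine) verification that the stretched-exponential flower tail from Corollary~\ref{c5.21} renders the lattice sum in the hypothesis of Theorem~\ref{t8.1} finite.
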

\begin{proof}
{It follows from combining Corollary \ref{c5.21} and Theorem \ref{t8.1}.}
\end{proof}
}
\begin{proof}[Proof of Theorem \ref{t8.1}]
Write $\Psi_0:=\Psi^\tau_0$.
Let $A\subset\R^d$ be a closed ball with radius $r$ centred at
the origin.
We wish to show that $\Psi_0(A)$ is almost surely determined
by $\Psi_0\cap A^c$. For this we use a modified version
of the mutual nearest neighbour matching of 
$\Z^d$ with $\Psi_0\cap A^c$, using the concept of {\em undecided}
points. We call the set of undecided points of $\Psi_0$ as $\Psi_0^{UD}$
and that of $\Phi$ as $\Phi^{UD}$.  The following points are undecided :
(i) all points in $\Z^d\cap A$, (ii) all points in $\Z^d\cap A^c$ or in
$\Psi_0\cap A^c$ who need to consider points inside $A$ for deciding
their potential matching partner. More precisely, we can write 
this as
\begin{align*}
\Psi_0^{UD} &= \{ x \in \Psi_0 \cap A^c :  F(\Z^d,\Psi_0\cap A^c,x) \cap  A \neq \emptyset \},  \\
\Phi^{UD} & = ( \Z^d \cap A ) \cup \{q \in \Z^d \cap A^c : F(\Z^d,\Psi_0\cap A^c,q)\cap A\neq\emptyset \}.
\end{align*}
We have illustrated the notion of undecided points in
Fig.~\ref{fig:undecided}. For $z \in (\Z^d \cup \Psi_0) \cap A^c$,
by the stopping set property of the matching flower
(Lemma~\ref{matching-flower-is-stopping-set}), we have that
$F(\Z^d,\Psi_0,z) \subset A^c$ iff
$F(\Z^d \cap A^c,\Psi_0 \cap A^c,z) \subset A^c$. Thus, we have that
$\Phi^{UD},\Psi_0^{UD}$ are determined by $(\Z^d,\Psi_0 \cap
A^c)$.
Further, by Proposition \ref{matching-of-psi-tau} the points in $\Z^d \setminus \Phi^{UD}$ are
matched to points in $\Psi_0 \setminus \Psi_0^{UD}$ and vice-versa.
\begin{figure}[t]
\centering
\includegraphics[width=\textwidth]{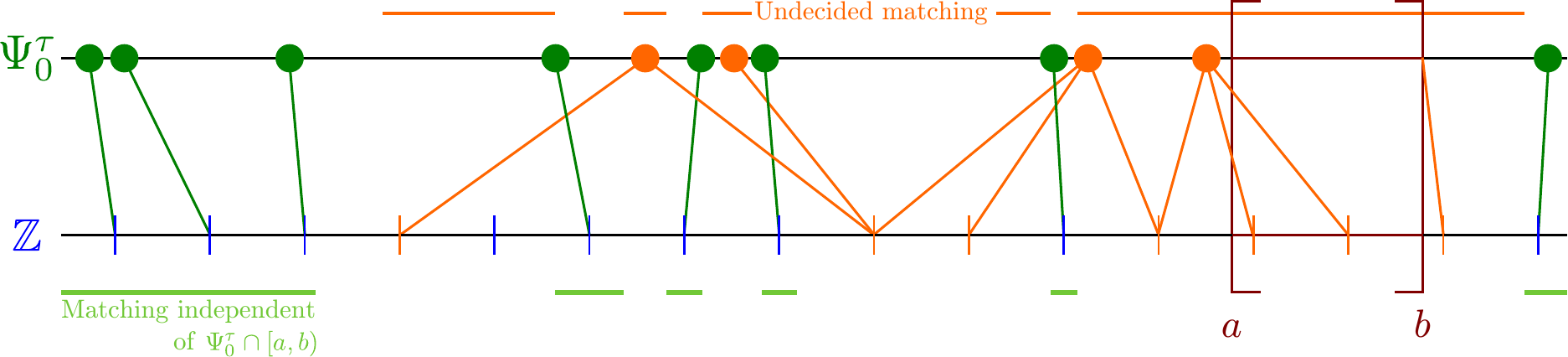}
\caption{Undecided matching: $\Psi^\tau_0$ is only observed outside of $A=[a,b]$.
	There are points (blue and green) for which the iterated mutually closest matching is independent of $\Psi^\tau_0\cap A$.
	The potential matching partners of the other points (marked in orange) are undecided.}
\label{fig:undecided}
\end{figure}
	
Let us now define $Z_1 := \card (\Phi^{UD} \cap A^c)$, 
$Z_2 : = \card(\Phi^{UD} \cap A)$, $Z_3 := \card \Psi_0^{UD}$.   Since $\Z^d$ is
completely matched to $\Psi_0$, we have that $Z_3 \leq Z_1 + Z_2$. Now
if we show that $Z_1 < \infty$ a.s., then we have by the above
inequality that $Z_3 < \infty$ a.s. and again as a consequence of the
complete matching of $\Z^d$ to $\Psi_0$, we derive that $\Psi_0(A) = Z_1
+ Z_2 - Z_3$ a.s. Thus, we have that $\Psi_0(A)$ is a.s.\ determined by
$(\Z^d,\Psi_0 \cap A^c)${,} i.e., $\Psi_0$ is a number rigid point process.
So our proof is now complete if we show that $Z_1 < \infty$ a.s..

By the stopping set property of the matching flower
(Lemma~\ref{matching-flower-is-stopping-set}) as mentioned above, we
have that
\[
  \Phi^{UD} \cap A^c = \{q\in\Z^d\cap A^c:F(\Z^d,\Psi_0,q)\cap A\ne\emptyset\}.
\]
Now,
\begin{align*}
\BE Z_1 = \BE \card (\Phi^{UD} \cap A^c) &=  \sum_{q\in\Z^d,\|q\|>r}\BP(F(\Z^d,\Psi_0,q)\cap A\ne\emptyset) \\
&=\sum_{q\in\Z^d,\|q\|>r}\BP((F(\Z^d,\Psi_0,0)+q)\cap A\ne\emptyset)\\
&=\sum_{q\in\Z^d,\|q\|>r}\BP(F(\Z^d,\Psi_0,0)\cap B(-q,r)\ne\emptyset)\\
&\le \sum_{q\in\Z^d,\|q\|>r}\BP(F(\Z^d,\Psi_0,0)\not\subset B(0,\|q\|-r)),
\end{align*}
where the last inequality follows from the fact that the balls $B(-q,r)$
and $B(0,\|q\|-r)$ have disjoint interiors.
Proposition \ref{matching-of-psi-tau}
shows that $\tau(\Z^d,\Psi_0,0)=\tau(\Z^d,\Psi,0)$.
Therefore any competing chain in $(\Z^d,\Psi_0)$ for $0$ is also
a competing chain in $(\Z^d,\Psi)$ for $0$ so that
we have $F(Z^d,\Psi_0,0)\subset F(Z^d,\Psi,0)$.
Hence we obtain that
\begin{align*}
\BE  Z_1\le \sum_{q\in\Z^d,\|q\|>r}\BP(F(\Z^d,\Psi,0)\not\subset B(0,\|q\|-r)).
\end{align*}
Now by our assumption the
last sum is finite and this yields that $Z_1 < \infty$ a.s. This
completes the proof.
\end{proof}

\section{Pair correlation function in the Poisson case}\label{spair}

In this section, we assume that $\Psi$ is a Poisson process
with intensity $\alpha>1$. 

For $x,y\in\R^d$ and $\varphi\in\mathbf{N}$, we write $\varphi^{x,y}:=\varphi\cup\{x,y\}$.
Recall the definition of matching status in \eqref{e3.7} and the definitions of
the intensity function 
and the second order correlation function of a point process from the appendix.
\begin{proposition}\label{p4.1} The function
\begin{align*}
\rho_1(x):=\alpha\,\BE M(\Psi^{x},\Z^d,x),\quad x\in\R^d,
\end{align*}
is an intensity function of $\Psi^\tau$, while
\begin{align*}
\rho_2(x,y):=\alpha^2\,\BE M(\Psi^{x,y},\Z^d,x)M(\Psi^{x,y},\Z^d,y),\quad x,y\in\R^d,
\end{align*}
is a second order correlation function of $\Psi_0^\tau$.
\end{proposition}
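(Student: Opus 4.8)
The plan is to verify the two defining factorial-moment identities for $\rho_1$ and $\rho_2$ using the Mecke equation for the Poisson process $\Psi$. Recall (from the appendix) that $\rho_1$ is an intensity function of a point process $\eta$ iff $\BE\sum_{x\in\eta}f(x)=\int f(x)\rho_1(x)\,dx$ for every nonnegative measurable $f$, and that $\rho_2$ is a second order correlation function iff $\BE\sideset{}{^{\ne}}\sum_{x,y\in\eta}f(x,y)=\iint f(x,y)\rho_2(x,y)\,dx\,dy$. Since $\Phi=\Z^d$ and $\Psi$ is Poisson, the matching $\tau(\Z^d,\Psi,\cdot)$ is almost surely well defined by Corollary~\ref{c3.6}, and $\Psi^\tau_0=\{x\in\Psi:M(\Z^d,\Psi,x)=1\}$ with $M$ the measurable status function of \eqref{e3.7}. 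I would also use the symmetry $M(\varphi,\psi,x)=M(\psi,\varphi,x)$, valid for non-equidistant $(\varphi,\psi)$ since then $\tau(\varphi,\psi,\cdot)=\tau(\psi,\varphi,\cdot)$.

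For $\rho_1$ I would fix a nonnegative measurable $f$ and write $\BE\sum_{x\in\Psi^\tau_0}f(x)=\BE\sum_{x\in\Psi}M(\Z^d,\Psi,x)f(x)$. Applying the Mecke equation (Theorem~4.1 in \cite{LastPenrose17}) to $g(x,\Psi)=M(\Z^d,\Psi,x)f(x)$ turns this into $\alpha\int\BE[M(\Z^d,\Psi^x,x)]f(x)\,dx$; swapping arguments by the symmetry above (legitimate for a.e.\ $x$, see below) gives $\int\rho_1(x)f(x)\,dx$, which is the claim. For $\rho_2$ the argument is the same one dimension higher: starting from $\BE\sideset{}{^{\ne}}\sum_{x,y\in\Psi}M(\Z^d,\Psi,x)M(\Z^d,\Psi,y)f(x,y)$ and invoking the multivariate Mecke equation (Theorem~4.4 in \cite{LastPenrose17}) produces $\alpha^2\iint\BE[M(\Z^d,\Psi^{x,y},x)M(\Z^d,\Psi^{x,y},y)]f(x,y)\,dx\,dy$. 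The key structural point is that both extra points are inserted simultaneously, so each status is evaluated in the common configuration $\Psi^{x,y}$, exactly matching the definition of $\rho_2$; a final symmetry swap yields $\iint\rho_2(x,y)f(x,y)\,dx\,dy$.

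I expect the only delicate step to be the symmetry swap, i.e.\ replacing $M(\Z^d,\Psi^x,x)$ by $M(\Psi^x,\Z^d,x)$ (and likewise for the pair). This needs $(\Z^d,\Psi^x)$ to be non-equidistant, which can fail when the inserted point is equidistant from two lattice points; but such configurations confine $x$ to a Lebesgue-null union of hyperplanes, and every other potential coincidence involves a Poisson point and hence has probability zero by absolute continuity. The analogous statement for $(\Z^d,\Psi^{x,y})$ holds for a.e.\ $(x,y)\in\R^{2d}$. Since correlation functions are only determined up to Lebesgue-null sets, this null set is invisible to the integral identities, so the swap is harmless and both identities hold as stated.
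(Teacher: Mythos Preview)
Your proposal is correct and follows exactly the route the paper takes: the Mecke equation for $\rho_1$ and the bivariate Mecke equation (the case $m=2$ of Theorem~4.4 in \cite{LastPenrose17}) for $\rho_2$. You are in fact more careful than the paper's two-line proof, which does not comment on the argument swap $M(\Z^d,\Psi^x,x)\leftrightarrow M(\Psi^x,\Z^d,x)$; your observation that this is justified by non-equidistance for a.e.\ $x$ (resp.\ $(x,y)$) is the right way to close that small gap.
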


{\em Proof:} The first equation is a quick consequence of
the Mecke equation (Theorem 4.1 in \cite{LastPenrose17})
while the second follows from the bivariate Mecke equation, that is the case
$m=2$ of Theorem 4.4 in \cite{LastPenrose17}.\qed

\bigskip

The next result shows that the {\em truncated second order correlation function}
decays exponentially fast.

\begin{proposition}\label{p4.7} 
There exist $a_1> 0$ such that 
\begin{align*}
|\rho_2(x,y)-\rho_1(x)\rho_1(y)|
\le a_1e^{-a^{-1}_1\|x-y\|^{\beta'}},\quad x,y\in\R^d,
\end{align*}
where $\rho_1$ and $\rho_2$ are as in Proposition \ref{p4.1}
and $\beta' = d/(2d+1)$.
\end{proposition}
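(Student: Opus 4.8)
The plan is to start from Proposition \ref{p4.1}, which expresses $\rho_1$ and $\rho_2$ through matching statuses of extra points, so that (dropping the harmless factor $\alpha^2$) the claim reduces to bounding
\[
\big|\BE[M(\Psi^{x,y},\Z^d,x)\,M(\Psi^{x,y},\Z^d,y)]-\BE[M(\Psi^{x},\Z^d,x)]\,\BE[M(\Psi^{y},\Z^d,y)]\big|.
\]
Throughout I abbreviate $M^{xy}_x:=M(\Psi^{x,y},\Z^d,x)$, $M_x:=M(\Psi^x,\Z^d,x)$, analogously for $y$, and set $r:=\|x-y\|$. The underlying mechanism is the same locality-plus-Poisson-independence idea already used in Theorem \ref{t:hypuniform_mixing}: the matching status of a point is decided inside its (bounded) matching flower, so two points whose flowers stay in the disjoint balls $B(x,r/2)$ and $B(y,r/2)$ decide their fate independently.

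Concretely, I would introduce the ``small flower'' events $G_x:=\{F(\Psi^{x,y},\Z^d,x)\subset B(x,r/2)\}$ and $G_y:=\{F(\Psi^{x,y},\Z^d,y)\subset B(y,r/2)\}$. Since $y\notin B(x,r/2)$ and $x\notin B(y,r/2)$, the stopping-set property (Lemma \ref{matching-flower-is-stopping-set}) shows three things at once: $G_x$ coincides with the single-extra-point event $\{F(\Psi^{x},\Z^d,x)\subset B(x,r/2)\}$ and is measurable with respect to $\Psi\cap B(x,r/2)$ (and $G_y$ with respect to $\Psi\cap B(y,r/2)$); on $G_x$ the flowers of $x$ with and without the far point $y$ agree, so $M^{xy}_x=M_x=1$; and, because these two balls have disjoint interiors and $\Psi$ is Poisson, $G_x$ and $G_y$ are independent, whence $\BP(G_x\cap G_y)=\BP(G_x)\BP(G_y)$.

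Using $M^{xy}_xM^{xy}_y=1$ on $G_x\cap G_y$ and $M_x=1$ on $G_x$, I would compare both sides to the common proxy $P:=\BP(G_x)\BP(G_y)$. Writing $p_x:=\BP(M_x=1,\,F(\Psi^x,\Z^d,x)\not\subset B(x,r/2))$ and $q_x:=\BP(M^{xy}_x=1,\,F(\Psi^{x,y},\Z^d,x)\not\subset B(x,r/2))$, the elementary estimates $\BE[M_x]-\BP(G_x)=p_x$ and $\BE[M^{xy}_xM^{xy}_y;(G_x\cap G_y)^c]\le q_x+q_y$ give
\[
|\rho_2(x,y)-\rho_1(x)\rho_1(y)|\le \alpha^2\,(p_x+p_y+q_x+q_y).
\]
The one-extra-point terms $p_x,p_y$ are exactly Proposition \ref{p4.5} (for the deterministic lattice this follows verbatim from the proof of Theorem \ref{t5.19}, fed with the extra-point matching-distance tail of Theorem \ref{t4.13} and the descending-chain bound of Lemma \ref{descending_chains2}), yielding $p_x\le c\,e^{-c^{-1}(r/2)^{\beta'}}$ with $\beta'=d/(2d+1)$, uniformly in $x$.

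The main obstacle is the two-extra-point term $q_x$: one must verify that adding the far point $y$ does not inflate the matching flower of $x$. Here I would rerun the argument of Theorem \ref{t5.19} in the configuration $\Psi^{x,y}$, decomposing $\{F(\Psi^{x,y},\Z^d,x)\not\subset B(x,r/2)\}$ into a ``large matching distance'' event and the event of a long descending chain started at $x$ with a short first step. The crucial point is that the steps of a descending chain are nonincreasing, so such a chain stays inside $B(x,r/2)$ and can never reach $y$; hence Lemma \ref{descending_chains2} applies unchanged, and the matching-distance contribution is controlled exactly as in Theorem \ref{t4.13} (the lone far point $y$ perturbs the matching of a lattice point $q$ only when $q$'s own flower is atypically large, which the same tail absorbs). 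This gives $q_x\le c\,e^{-c^{-1}(r/2)^{\beta'}}$ uniformly in $x,y$, and summing the four tail terms yields the assertion with $\beta'=d/(2d+1)$ (consistent with Corollary \ref{c5.21}). I expect the only delicate work to be this far-point bookkeeping in $q_x$ together with checking that every bound is genuinely uniform in $x$ and $y$; the rest is the stopping-set locality and Poisson independence on disjoint balls.
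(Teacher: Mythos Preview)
Your approach is essentially the paper's: localize via the matching flower, use Poisson independence on disjoint balls, and control the residual ``flower too large'' probabilities by the mechanism of Theorem~\ref{t5.19} (matching-distance tail plus descending-chain bound). The paper uses balls of radius $\|x-y\|/3$ and writes the bound directly as $2p_x+2p_y$, proving only the single-extra-point tail~\eqref{e6.2}; your radius $\|x-y\|/2$ and the explicit splitting into $p$-terms and $q$-terms are cosmetic.

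There is, however, a genuine gap in your treatment of $q_x$. The descending-chain part is fine: a chain of length $m\approx K(r/2)^{1-\varepsilon}$ with first step at most $K^{-1}(r/2)^\varepsilon$ stays inside $B(x,r/2)$ and cannot visit $y$, so Lemma~\ref{descending_chains2} applies unchanged. But the matching-distance part does not work as you describe. Following Theorem~\ref{t4.13}, one must sum $\BP(\tau(\Z^d,\Psi^{x,y},q)=x)$ over $q\notin B(x,s)$, and your idea of controlling the effect of $y$ through the \emph{flower} of $q$ only yields terms of order $\exp\big(-c\,\min(\|q-x\|,\|q-y\|)^{\beta'}\big)$; lattice points $q$ close to $y$ then contribute a non-decaying $O(1)$ piece, so ``the same tail'' does not in fact absorb it. The fix --- which is exactly what makes the paper's reduction to the one-extra-point tail $p_x$ legitimate --- is the monotonicity of stable matchings: adding the competitor $y$ to $\Psi^x$ can only make $x$ weakly worse off, so $M^{xy}_x=1$ forces $M_x=1$, whence $q_x\le p_x$. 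Equivalently, removing $x$ and $y$ from $\Psi^{x,y}$ can only push each lattice point's partner farther, so $\tau(\Z^d,\Psi^{x,y},q)=x$ implies $\|\tau(\Z^d,\Psi,q)-q\|\ge\|x-q\|$, and one can sum the Theorem~\ref{t4.1} bound directly. With this observation in place your argument goes through.
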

{\em Proof:} 
Let $x,y\in\R^d$.
By the stopping set property of the matching flower and complete independence of $\Psi$,
the events $\{F(\Z^d,\Psi^x,x)\subset B(x,\|y-x\|/3)\}$ and
$\{F(\Psi^y,\Z^d,y)\subset B(y,\|y-x\|/3)\}$ are independent. Moreover,
on the first event we have that
$M(\Psi^{x,y},\Z^d,x)$ $=M(\Psi^{x},\Z^d,x)$. 
A similar statement applies to the second event. Therefore
it follows that
\begin{align*}
|\BE &M(\Psi^{x,y},\Z^d,x)M(\Psi^{x,y},\Z^d,y)-\BE M(\Psi^{x},\Z^d,x)\BE M(\Psi^{y},\Z^d,y)|\\
&\le 2\, \BP(M(\Psi^{x},\Z^d,x)=1,F(\Z^d,\Psi^x,x)\not\subset B(x,\|y-x\|/3))\\
&\quad + 2\, \BP(M(\Psi^{y},\Z^d,y)=1,F(\Psi^y,\Z^d,y)\not\subset B(y,\|y-x\|/3)).
\end{align*}
We claim that
\begin{align}\label{e6.2}
\BP(M(\Psi^{x},\Z^d,x)=1,F(\Z^d,\Psi^x,x)\not\subset B(x,r))
\le b'_1e^{-b'_2r^{\beta'}}, \quad r>0,\,x\in\R^d.
\end{align}
for some constants $b'_1,b'_2>0$. This would imply the assertion.

To check \eqref{e6.2}, we need to derive the equivalent of
Proposition \ref{p4.5} for non-randomized lattices. However, instead
of using Theorem \ref{t4.7}, we use Corollary \ref{c5.21} and follow
the proof of Theorem~\ref{t5.19}. Since $\theta = 0$ for the Poisson point process, $\varepsilon = 1/(2d+1)$.  But since decay rate provided by
Corollary \ref{c5.21} is same as that of Theorem \ref{t4.1}, we obtain the
exponent $\beta'=\varepsilon d$.
\qed

\begin{remark}\label{asymptotics}\rm
Let $W$ be an arbitrary convex and compact set with $\lambda_d(W)>0$ and
$0 \in W$.
From hyperuniformity ({Theorem~\ref{t:dpp_hyp}}), exponential decay of
truncated second order correlation function (Proposition \ref{p4.7}) and
using Proposition 2 of \cite{Martin1980}, we have that
$\BV\Psi_0^\tau(rW) = O(r^{d-1})$ {(that is, $\limsup_{r \to
\infty}r^{-(d-1)}\BV\Psi_0^\tau(rW) < \infty$)}
and further using Lemma 1.6 of \cite{Nazarov2012} for
$d = 2$, we have that $\BV\Psi_0^\tau(rW) = \Theta(r)$
{(that is, $\limsup_{r \to \infty}r^{-1}\BV\Psi_0^\tau(rW) < \infty$
and $\limsup_{r \to \infty}r/\BV\Psi_0^\tau(rW) < \infty$).}
These are so-called class~I hyperuniform systems;
see~\cite{Torquato2018}.
\end{remark}

\section{One-sided matching on the line}\label{secone-sided}

In this section, we shall consider the one-sided matching on the line.
The main motivation for such a consideration is to generalize our
hyperuniformity results for stable partial matchings.
Our set-up is to consider a partial matching of a $\Z$-stationary point process $\Phi$
with a $\Z$-stationary point process $\Psi$ on $\R$.
{As in general dimensions, we say that a point process $\Phi$
on $\R$ is is $\Z$-stationary if $\Phi+z$ has the same distribution as
$\Phi$ for all $z\in\Z^d$.}
The main difference with the previous sections shall be that the
matching will be one-sided{,} i.e., every $\Phi$ point will be
matched to its right.
Every $\Phi$ point is matched with the first `available' $\Psi$ point on 
its right. In other words, the $\Phi$ points search for their
`partners' to their right and the $\Psi$ points search for their
`partners' to their left.  One can describe a mutual nearest-neighbour
matching algorithm as for stable matching (see Section \ref{sdefinition}) but with $\Phi$ points searching only to their right and $\Psi$ points searching only to their left. The matched process $\Psi_0 := \Psi_0^{\tau} \subset \Psi$ can be interpreted as {\em the output process} of a queueing system with arrivals at $\Phi$ and potential departures at $\Psi$ with LIFO (Last-In-First-Out) rule. Under certain additional assumptions, we shall now show that $\Psi_0$ is hyperuniform iff $\Phi$ is.
Departure processes of queueing systems under various service rules and
i.i.d.~inter-arrival times have been studied extensively in queueing
theory and in particular, hyperuniformity of $\Psi_0$ has been proven in
the case of $\Phi = \Z_+$ (i.e., periodic arrivals) (see \cite{HMNW11,Whitt1984}).
From a more point process perspective, this point process has been
investigated in \cite{GoldsteinLebowitzSpeer2006}.  Here, we interpret
the departure process as arising from a partial matching and hence
naturally extend to general arrival and potential departure processes.

We use the notation as before for describing the matching. For $p \in
\Phi$, let $\tau(\Phi,\Psi,p) \in \Psi$ denote the  matched point of
$\Psi$ and for $x \in \Psi$, $\tau(\Phi,\Psi,x) \in \Phi$ denotes the
matched point of $\Phi$ if it exists, else we set $\tau(\Phi,\Psi,x) =
\infty$. By our assumption on one-sided matching, we have that $Y_p :=
\tau(\Phi,\Psi,p) - p \geq 0$ for all $p \in \Phi$. By stationarity of
$\Psi$ and $\Phi$, we have that the random field $\{Y_p\}_{p \in \Phi}$
and the point process $\Psi_0$ are $\Z$-stationary. For $t \in \Z$, we
define $L(t)$ as the number of un-matched points of $\Phi$ up to
$t${,} i.e.,
$$ L(t) := \sum_{p \in \Phi, p < t} \1\{ p + Y_p \geq t\} =  \sum_{p \in \Phi - t, p - t < 0} \1\{ p - t + Y_p \geq 0\} \stackrel{d}{=} \sum_{p \in \Phi, p < 0} \1\{ p + Y_p \geq 0\},$$ 
where the last equality follows from $\Z$-stationarity of $\Psi$ and
$\{Y_p\}_{p \in \Phi}$. Thus, we see that $\{L(t)\}_{t \in \Z}$ is also
a $\Z$-stationary random field. Now, we have our main theorem on
one-sided matchings.
\begin{theorem}
\label{t:hyp_1s}
Let $\Phi$ and $\Psi$ be $\Z$-ergodic point processes on $\R$
with intensities $1$ and $\alpha (\geq 1)$ respectively.
Consider the one-sided matching of $\Phi$ and $\Psi$ as
described above. Also, assume that $\BV(L(0)) < \infty$. Then, we have the following :
\begin{enumerate}
\item If $\lim_{t \in \N, t \to \infty} \BV(\Phi([-t,t))) = \infty$ then we have that
\[ \lim_{t \in \N, t \to \infty} \frac{\BV(\Psi_0([-t,t)))}{\BV(\Phi([-t,t)))} = 1 .\]
\item If $\BV(\Phi([-t,t)))$ is bounded then so is $\BV(\Psi_0([-t,t)))$.
\end{enumerate}
\end{theorem}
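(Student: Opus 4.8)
The plan is to reduce both assertions to a single \emph{pathwise flow-conservation identity} tying the matched points to the arrivals and the queue-length function $L$, and then to compare variances via the triangle inequality in $L^2$. Concretely, for $t\in\N$ I would first establish the deterministic identity
\[
  \Psi_0([-t,t)) = \Phi([-t,t)) + L(-t) - L(t),
\]
which expresses that the number of departures in $[-t,t)$ equals the number in the system at time $-t$, plus the arrivals during $[-t,t)$, minus the number still in the system at time $t$. Here a departure in $[-t,t)$ is a point $p+Y_p\in[-t,t)$ with $p\in\Phi$, and, crucially, because $Y_p\geq 0$ no arrival to the right of $t$ can depart inside $[-t,t)$, so the only boundary contributions come from the left.

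To prove the identity I would classify each $p\in\Phi$ according to whether its arrival time $p$ and its departure time $p+Y_p$ lie before $-t$, inside $[-t,t)$, or at/after $t$, recalling that $L(s)=\sum_{p<s}\1\{p+Y_p\geq s\}$ counts exactly the points in the system at time $s$. Grouping the resulting indicators shows that $L(-t)+\Phi([-t,t))-L(t)$ collapses to precisely the points that depart in $[-t,t)$, namely $\Psi_0([-t,t))$; points that are never matched (where $Y_p=\infty$) are handled automatically, since they are counted in every $L(s)$ with $s>p$ but never as a departure. The assumption $\BV(L(0))<\infty$ guarantees that $L(0)$, and hence by $\Z$-stationarity each $L(\pm t)$, is almost surely finite and square integrable, so the identity is meaningful and all three terms lie in $L^2$ as soon as one of $\Phi([-t,t))$, $\Psi_0([-t,t))$ does.

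With the identity in hand the remainder is soft. Writing $D_t:=L(-t)-L(t)$, the $\Z$-stationarity of $\{L(t)\}_{t\in\Z}$ gives $\BV(L(\pm t))=\BV(L(0))$, whence $\mathrm{sd}(D_t)\leq\mathrm{sd}(L(-t))+\mathrm{sd}(L(t))=2\sqrt{\BV(L(0))}=:C$ by the $L^2$ triangle inequality. Applying the reverse triangle inequality to $\Psi_0([-t,t))=\Phi([-t,t))+D_t$ yields
\[
  \big|\,\mathrm{sd}(\Psi_0([-t,t)))-\mathrm{sd}(\Phi([-t,t)))\,\big|\leq \mathrm{sd}(D_t)\leq C,
\]
uniformly in $t$. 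For part (1), if $\BV(\Phi([-t,t)))\to\infty$ then $\mathrm{sd}(\Phi([-t,t)))\to\infty$, so $\mathrm{sd}(\Psi_0([-t,t)))$ is trapped in $[\mathrm{sd}(\Phi([-t,t)))-C,\ \mathrm{sd}(\Phi([-t,t)))+C]$ and the ratio of squares tends to $1$. For part (2), if $\mathrm{sd}(\Phi([-t,t)))\leq M$ then $\mathrm{sd}(\Psi_0([-t,t)))\leq M+C$, so $\BV(\Psi_0([-t,t)))$ is bounded.

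The main obstacle is the rigorous bookkeeping in the conservation identity rather than any analytic estimate: one must be scrupulous about the half-open interval $[-t,t)$, the convention $p+Y_p\geq s$ (not $>s$) in the definition of $L$, and the possibility of unmatched $\Phi$-points with $Y_p=\infty$, verifying that the identity survives every case. I would also remark that $\Z$-ergodicity is not actually needed for this variance comparison; stationarity of $\Phi$, $\Psi$ and of the field $\{Y_p\}$ (hence of $\{L(t)\}$) together with the hypothesis $\BV(L(0))<\infty$ already suffices.
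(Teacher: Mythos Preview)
Your proof is correct and follows essentially the same approach as the paper: establish the flow-conservation identity $\Psi_0([-t,t))=\Phi([-t,t))+L(-t)-L(t)$, then use $\Z$-stationarity of $L$ together with $\BV(L(0))<\infty$ to compare variances. The only cosmetic difference is that you package the final estimate via the $L^2$ triangle/reverse-triangle inequality on standard deviations, whereas the paper expands $\BV(\Psi_0([-t,t)))$ and bounds the cross-covariance by Cauchy--Schwarz; these are equivalent arguments.
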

In other words, we have that $\Psi_0$ is hyperuniform iff $\Phi$ is
hyperuniform under appropriate assumptions. Though it is surprising that
there is no explicit requirement on tail bounds for the $Y_p$, the finite
variance condition shall impose the same.  Suppose that $\Phi = \Z$ and
then by stationarity of the $Y_p$, we have that
\[\BE(L(0)) = \sum_{p \in \Z, p < 0}\BP(Y_p \geq -p) = \sum_{p \in \Z, p < 0}\BP(Y_0 \geq -p) = E(Y_0).\]
Thus $E(Y_0) < \infty$ is a necessary condition for $\BV(L(0))$ to be
finite. After the proof, we shall also give a sufficient condition for
the finiteness of the variance when $\Phi = \Z$. 
\begin{proof} {The proof idea is similar to that of \cite[Theorem 1.1]{Whitt1984}.} We shall derive suitable bounds on $\BV(\Psi_0([-t,t)))$ from which both the statements will follow trivially. We shall assume that always $t \in \N$. From the representation of $L(t)$, we have that 
$$ \Psi_0[-t,t) = \sum_{p \in \Phi} \1\{ -t \leq p + Y_p < t\} = L(-t) - L(t) + \Phi([-t,t)), \, \, \, t \in \N$$
and so we obtain that
\[ \BV(\Psi_0([-t,t)))  = \BV(L(-t) - L(t)) + 2\COV{L(-t)-L(t),\Phi([-t,t))}  + \BV(\Phi([-t,t))) \]
Now again by $\Z$-stationarity of $L(.)$ and Cauchy-Schwarz, we derive that
\[  \BV(L(-t) - L(t)) = 2\BV(L(0)) - 2\COV{L(0),L(2t)} \leq 4\BV(L(0)), \]
and
\[ 2|\COV{L(-t)-L(t),\Phi([-t,t))}| \leq 4 \sqrt{\BV(\Phi([-t,t))\BV(L(0))}. \]
Thus, both the statements in 1.) and 2.) follow from the finiteness of $\BV(L(0))$ and our respective assumptions on $\BV(\Phi([-t,t)))$.
\end{proof}
\begin{theorem}
\label{t:var_zd}
Let $\Phi = \Z$ and $\Psi$ be a $\Z$-stationary point process on
$\R$ with intensity $\alpha \geq 1$ such that $\sum_{p >
0, p \in \Z}\sqrt{\BP(\Psi([0,p]) < p)} < \infty$. Then we have that
$\BE(L(0)^2) < \infty$. 
\end{theorem}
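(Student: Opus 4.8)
The plan is to start from the representation of $L(0)$ given in the text. Writing the points $p<0$ of $\Phi=\Z$ as $p=-k$, $k\in\N$, and using $p+Y_p\ge 0\iff Y_{-k}\ge k$, we have $L(0)=\sum_{k\ge 1}\1\{Y_{-k}\ge k\}$, so that
\[
\BE(L(0)^2)=\sum_{k,l\ge 1}\BP(Y_{-k}\ge k,\,Y_{-l}\ge l).
\]
Bounding each joint probability via $\BP(A\cap B)\le\sqrt{\BP(A)\BP(B)}$ and invoking the $\Z$-stationarity of the field $\{Y_p\}_{p\in\Z}$ (which gives $\BP(Y_{-k}\ge k)=\BP(Y_0\ge k)$), the double sum factorises into a square, and I would thereby reduce the whole statement to proving that $\sum_{k\ge 1}\sqrt{\BP(Y_0\ge k)}<\infty$.

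The heart of the argument, and the step I expect to be the main obstacle, is a deterministic (almost sure) inclusion relating the matching distance of $0$ to a count of $\Psi$, namely $\{Y_0\ge k\}\subseteq\{\Psi([0,k])<k\}$ for every $k\in\N$. To see this, I would suppose $\tau(\Phi,\Psi,0)=x$ with $x\ge k$ (allowing $x=\infty$, i.e.\ $0$ unmatched) and take any $y\in\Psi\cap(0,k)$. Since $0<y<x$, the point $0$ prefers $y$ to its own partner; by stability of the one-sided matching the pair $(0,y)$ cannot be unstable, so $y$ must itself be matched and must prefer its own partner $p:=\tau(\Phi,\Psi,y)$ to $0$. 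As $\Psi$ searches only to its left, $p\le y$, while the preference of $y$ forces $|y-p|\le|y-0|$, i.e.\ $p\ge 0$; moreover $p\ne 0$, since $0$ is matched to $x\ne y$. Hence $p\in\Z\cap(0,y]\subseteq\{1,\dots,k-1\}$. Because distinct points of $\Psi$ have distinct partners, $y\mapsto\tau(\Phi,\Psi,y)$ is an injection of $\Psi\cap(0,k)$ into $\{1,\dots,k-1\}$, so $\Psi((0,k))\le k-1$; and since $\Psi$ almost surely charges neither $\{0\}$ nor $\{k\}$, this yields $\Psi([0,k])<k$ almost surely.

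Combining the two steps gives $\BP(Y_0\ge k)\le\BP(\Psi([0,k])<k)$, and therefore
\[
\BE(L(0)^2)\le\Big(\sum_{k\ge 1}\sqrt{\BP(Y_0\ge k)}\Big)^2\le\Big(\sum_{p\ge 1}\sqrt{\BP(\Psi([0,p])<p)}\Big)^2<\infty
\]
by the hypothesis of the theorem. The only delicate points to check carefully are the one-sided stability and preference bookkeeping in the injection (including the borderline case $x=\infty$, where the same reasoning applies essentially verbatim) and the almost-sure irrelevance of the boundary points $0$ and $k$; the remaining estimates are routine.
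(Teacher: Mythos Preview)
Your proposal is correct and follows essentially the same route as the paper: expand $\BE(L(0)^2)$ as a double sum over indicator events, apply Cauchy--Schwarz in the form $\BP(A\cap B)\le\sqrt{\BP(A)\BP(B)}$ to factorise, use $\Z$-stationarity of $\{Y_p\}$ to reduce to $\BP(Y_0\ge k)$, and then compare $\{Y_0\ge k\}$ with $\{\Psi([0,k])<k\}$. The only difference is that the paper asserts the stronger \emph{equality} $\{Y_0>p\}=\{\Psi([0,p])<p\}$ ``due to the definition of the one-sided matching'' without further justification, whereas you prove the one-sided inclusion via the injection $y\mapsto\tau(\Phi,\Psi,y)$ from $\Psi\cap(0,k)$ into $\{1,\dots,k-1\}$; the inclusion is all that is needed for the bound, and your argument for it is sound.
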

The condition on $\Psi$ is satisfied by stationary determinantal point
processes (including Poisson) with intensity $\alpha > 1$ (see Theorem
\ref{t:dpp_conc}). Many other stationary point processes called as {\em
weak sub-Poisson point processes} and with intensity $\alpha > 1$  also
satisfy the condition in Theorem \ref{t:var_zd} (see Proposition 2.3 in
\cite{Blaszczy15}). Determinantal point processes are examples of the
so-called weak sub-Poisson point processes. 
\begin{proof}
By the representation of $L(0)$, stationarity of the $Y_p$ and Cauchy-Schwarz inequality, we have that
\[ \BE(L(0)^2) \leq \left(\sum_{p \in \Z, p < 0} \sqrt{\BP(Y_0 \geq -p)} \right)^2. \]
The proof is now complete by noting that for $p \in \Z, p > 0$, 
$\{Y_0 > p\} = \{\Psi([0,p]) < p\}$ due to the definition of the one-sided matching.
\end{proof}

\section{Simulated matching on a torus}
\label{sec_simulations}

\begin{figure}[t]
  \centering
  \includegraphics[width=0.49\textwidth]{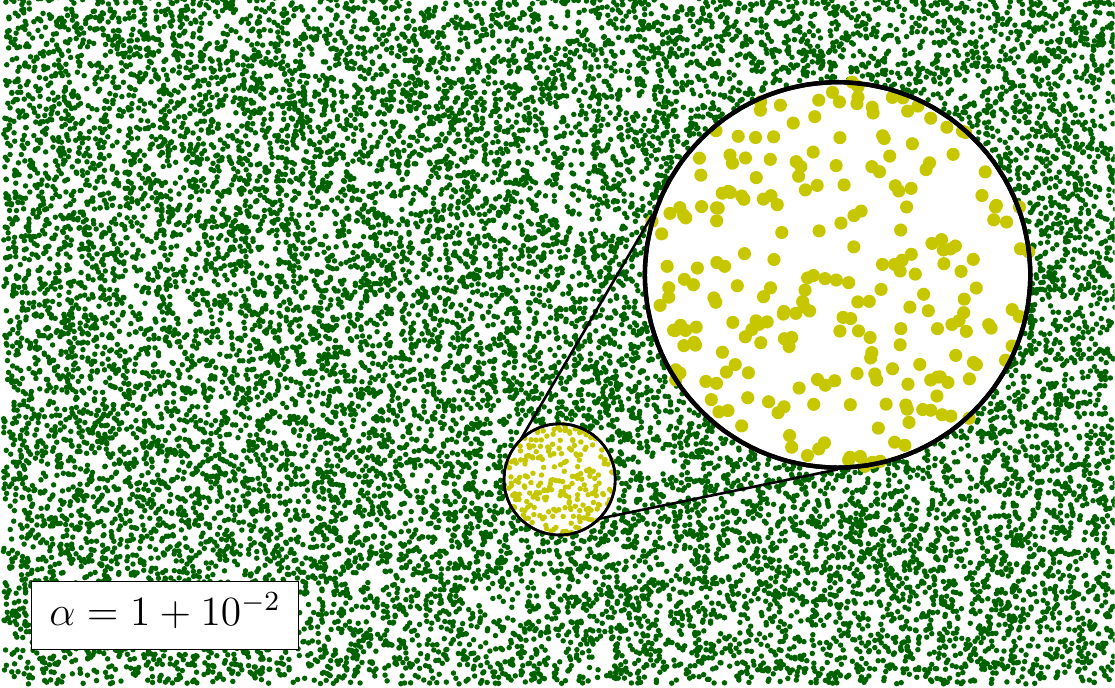}
  \hfill
  \includegraphics[width=0.49\textwidth]{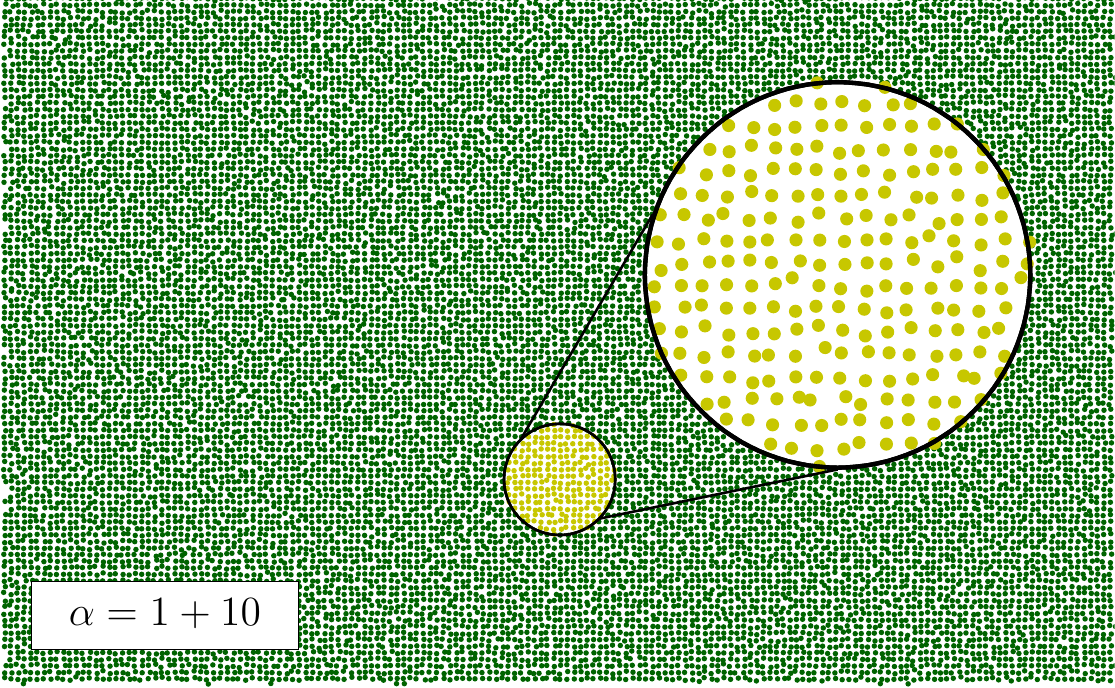}
  \caption{Hyperuniform and rigid stable matchings $\Psi^\tau$: the samples
  show matchings of a lattice on a 2D torus to two Poisson point processes with
  different intensities. Note the different degree of local order (highlighted by
  the magnifying glasses).}
  \label{fig:samples}
\end{figure}

The {simulation} procedure of the mutual nearest neighbor
matching closely follows the description of the algorithm in
Section~\ref{intro}. Given two
{point patterns}, the algorithm iteratively matches
mutual nearest neighbors and removes them from the list of unmatched
points. {When in one of the point patterns no unmatched points remain,
the algorithm terminates.}
The only difference to the infinite model discussed above is that the
finite{,} simulated samples are matched on a torus (that is, with periodic
boundary conditions).
{Our implementation uses periodic copies of points and an efficient
nearest neighbor search to match big data sets.}

For this study, we implemented the mutual nearest neighbor algorithm in
\verb+R+~\cite{R} using the \verb+RANN+ library~\cite{RANN}.
Our \verb+R+-package for matching point patterns on the
torus in arbitrary dimension is freely available {via the GitHub
repository}~\cite{Klatt2018}. 
Moreover, all data of this simulation study is {published in
the Zenodo repository~\cite{K2020}.}
The random numbers for the simulation of random point patterns are generated
by the MT19937 generator known as ``Mersenne Twister''.

Samples of a stationarized lattice $\Z^d$ in a finite
observation window $[0,L)^d$ {(considered as a flat torus)} are matched to realizations of Poisson and
determinantal point processes in 1D, 2D, and 3D with 
{varying intensities.
Figure~\ref{fig:samples} shows
2D samples of the thinned process $\Psi^{\tau}$ at intensities
either close to or far away from unity.}

{A sample of a Poisson point process with intensity $\alpha$ is simulated
by randomly placing independent and uniformly distributed points inside
an observation window, where the number of points is sampled from a Poisson
distribution with mean value $\alpha L^d$.
We simulate samples of determinantal point processes using the}
\verb+R+-function \verb+dppPowerExp+ of the \verb+spatstat+
{library~\cite{spatstat}.
It simulates a power exponential spectral model defined
in~\cite{LMR2015}.
The model parameter of our simulations are a shape parameter of 10
and a scale parameter of $(1-10^{-4})$ times the maximally allowed value.
The corresponding determinantal point process is not hyperuniform.}

In 3D, both processes were simulated at {intensities}
$\alpha = 1+10^{-2}$, $1+1$, and $1+10$,
where for the determinantal point process $L=10$, $8$, and $5$,
respectively (which corresponds to about 1000 points per sample),
and for the Poisson point process $L=22$ at each intensity
(which corresponds to about 10,000 to 100,000 points per sample).
For each process and intensity, we simulated 100~samples.

\begin{figure}[t]
  \centering
  \includegraphics[width=\textwidth]{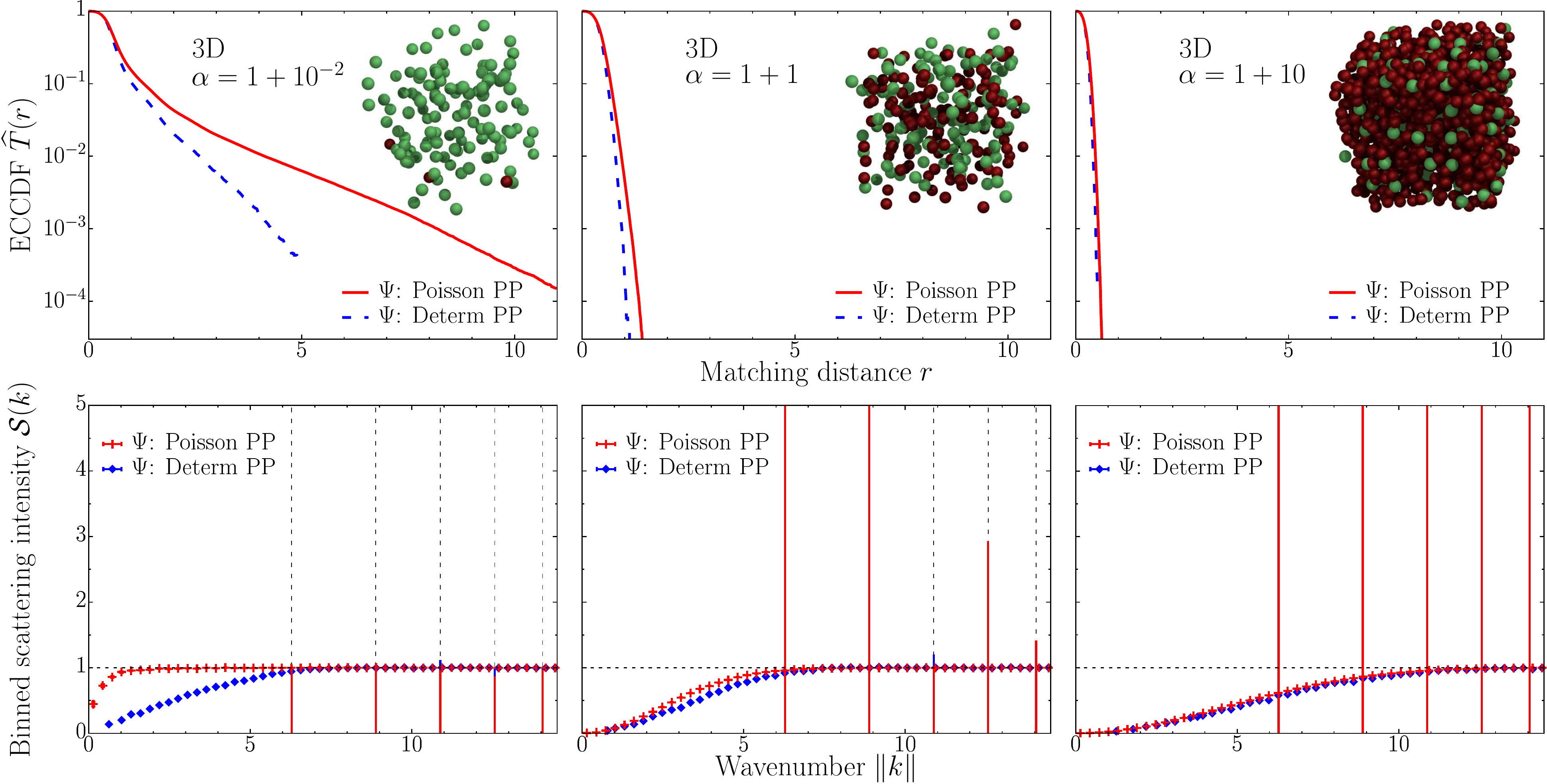}
  \caption{Stable matchings of the lattice $\Z^3$ to Poisson and
    determinantal point processes {on the torus} at different
    intensities $\alpha$ (left, center, right):
    empirical complementary cumulative distribution function
    (ECCDF) $\widehat{T}$
    of the distance $\|\tau(\mathbb{Z}^3,\Psi,q)\|$
    of a lattice point $q$ to its matching partner (top)
    and scattering intensity $\mathcal{S}(k)$ for the thinned
    process $\Psi^{\tau}$ (bottom).
    In the depicted samples of the determinantal point process,
    green (light) balls represent matched points
    and red (dark) balls represent unmatched points.}
  \label{fig:3D}
\end{figure}

Figure~\ref{fig:3D} shows (in the top) for both the Poisson and the
determinantal point process the empirical complementary cumulative
distribution function (ECCDF) $\widehat{T}$ of the distance
$\|\tau(\mathbb{Z}^3,\Psi,q)\|$ of a lattice point $q$ to its matching
partner 
{(based on all lattice points and their matching partners in all
  samples).
It is} an empirical estimate of the tail distribution
$\BP(\|\tau(\mathbb{Z}^3,\Psi,q)\|>r)$ for $r>0$.
The exponential tails in the semi-logarithmic plots are in agreement with
Theorem~\ref{t4.1} and Remark~\ref{rem:optimality},
where the constant $c_1$ depends on the process $\Psi$ and in particular on
its intensity $\alpha$.
As expected, $c_1$ decreases with decreasing $\alpha$.

Next, we analyze some two-point statistics of the process of matched points
$\Psi^{\tau}$.
The long-range behavior of the process can be well studied using the
Fourier transform of the truncated pair-correlation function.
The so-called \textit{structure factor} is defined as {the
function}
\begin{align}
  S(k):= 1 + \int_{[0,L)^d}(\rho_2(x,0)-1)e^{-i\langle k,x\rangle}\,dx, \quad k\in(2\pi/L)\N_0^d,
\end{align}
for a point process with unit intensity, where $\langle k,x\rangle$ is the scalar product of $k$ and $x$.
The norm of the \textit{wave vector} $k$ is called the
\textit{wavenumber} $\|k\|$.
{If the point process is stationary and if $\rho_2(x,0)-1$ is
(absolutetly) integrable, then} the condition \eqref{ehyperun} for
hyperuniformity is equivalent to~\cite{TorquatoStillinger2003,
KimTorquato2017}
\begin{align}
  \lim_{k\to0} S(k) = 0.
\end{align}
{In that case,} hyperuniformity can be detected
{in simulations by extrapolating $S(k)$.
However, in the case of a stationarized lattice,  
the pair-correlation function of $\Psi^{\tau}$ does not
necessarily exist.
So, instead of the structure factor, we compute a different
two-point statistics.
The empirical \textit{scattering intensity} function}
\begin{align}
  \mathcal{S}(k) := \frac{1}{N}\left|\sum_{j=1}^Ne^{-i\langle k,X_j\rangle}\right|^2
\end{align}
{is defined for  a sample of $N$ points at positions $X_j$.
The} name indicates that this quantity is directly observable in
$X$-ray or light-scattering experiments {(under suitable
assumptions)~\cite{HansenMcDonald2013}.
In statistical physics, the scattering
intensity function is commonly used to estimate the structure factor
since $\BE\mathcal{S}(k)$ (suitably compensated) converges in the limit
$L\to\infty$} to $S(k)$~\cite{HansenMcDonald2013, Torquato2018}.

Figure~\ref{fig:3D} shows (in the bottom) {the scattering intensities
$\mathcal{S}(k)$} for {the matched points in samples of}
the Poisson or the determinantal point processes in 3D at
different intensities $\alpha$.
The average values and error bars of $\mathcal{S}(k)$ correspond not
only to an average over the 100 samples per curve but also to a binning
of all allowed wave numbers, which only excludes those wave vectors that
correspond to reciprocal lattice vectors of the cubic lattice.
For the latter, the corresponding wavenumbers are indicated by dashed
vertical lines and the average values of $\mathcal{S}(k)$ are depicted by
solid vertical lines.
In the infinite system limit,
we expect these values to diverge (indicating
the underlying long-range order of the lattice).
With increasing intensity these so-called Bragg peaks are clearly
visible for both the matched Poisson and determinantal point process.
For $\alpha\to 0$, the
scattering intensities become similar to those of the unmatched point
processes $\Psi$.
In the case of a matched Poisson process, a suppression of density
fluctuations can only be seen at large length scales, that is,
small wavenumbers $\|k\|$.

For such low intensities, huge system sizes are necessary to
numerically confirm the hyperuniformity.
Therefore, we have simulated large samples of the Poisson point
process in 1D with $L=10^6$ at $\alpha=1.01$. 
We have also simulated samples in 2D with $L=300$ at $\alpha=11$.
This corresponds to about $10^6$ points per sample,
where we have simulated ten samples at each intensity.

\begin{figure}[t]
  \centering
  \includegraphics[width=0.49\textwidth]{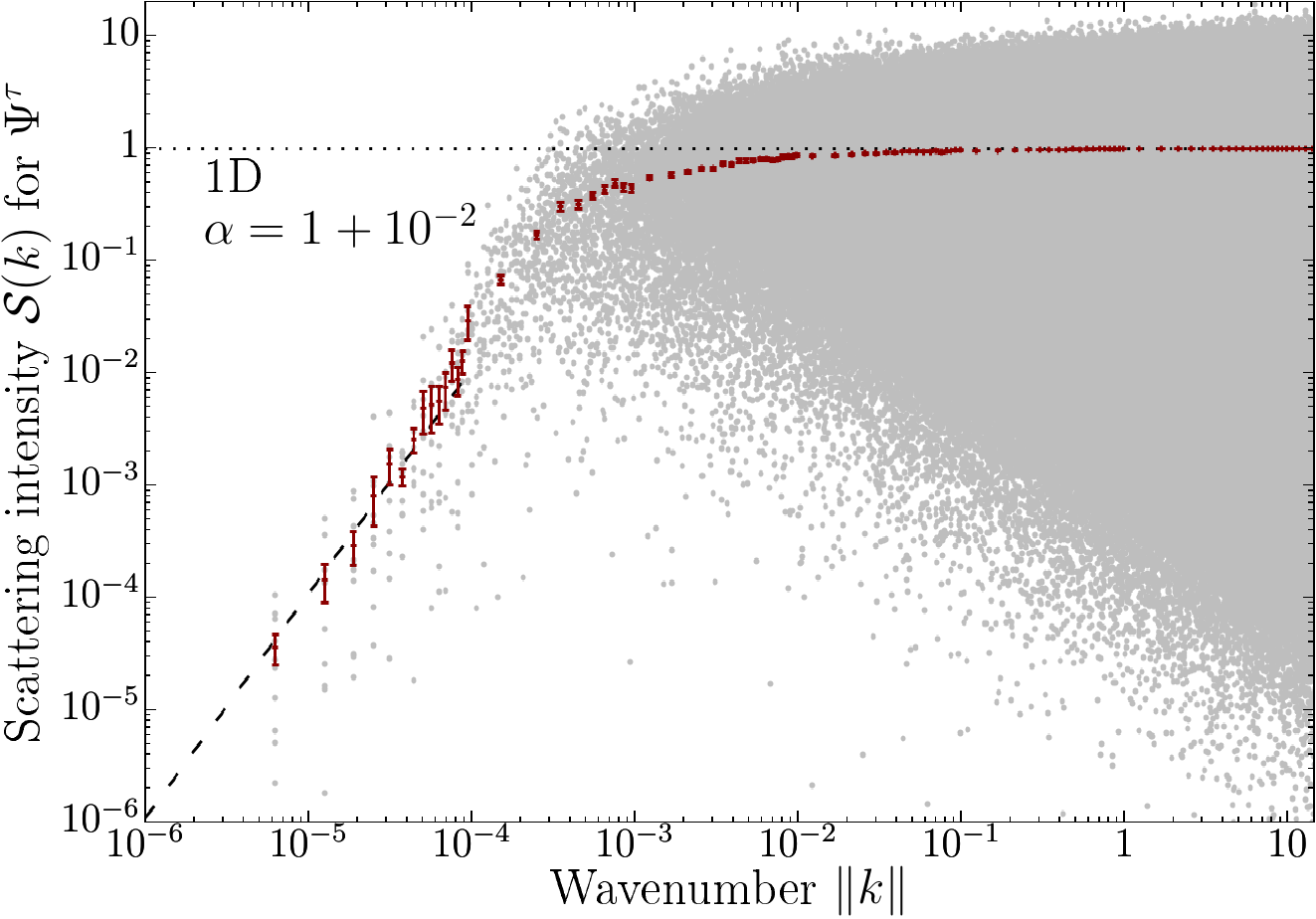}
  \hfill
  \includegraphics[width=0.49\textwidth]{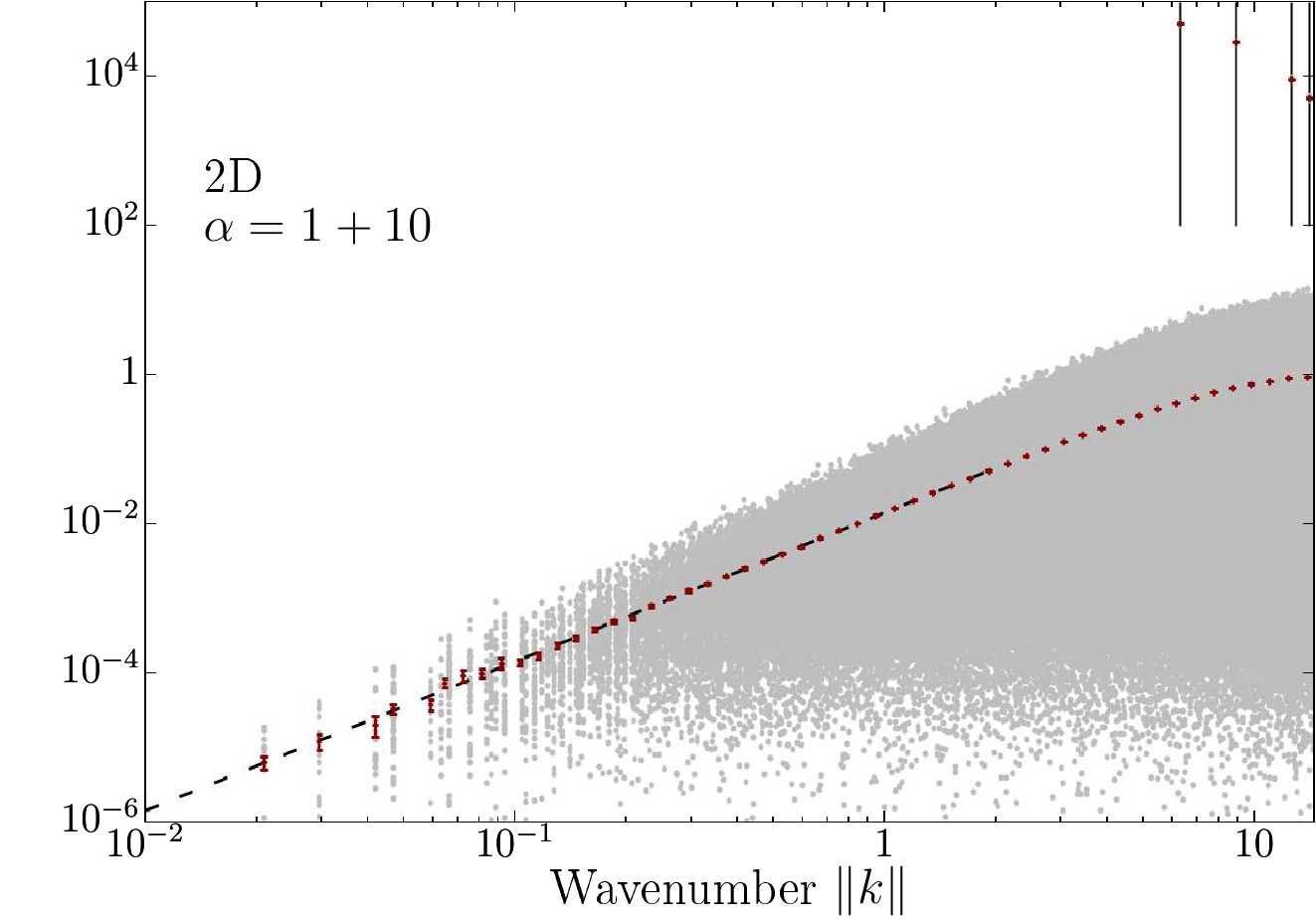}
  \caption{High precision analysis of the
  scattering intensity $\mathcal{S}(k)$ for stable
  matchings in 1D (left) and 2D (right) on the torus between a stationarized lattice
  and a Poisson point process with intensity close (left) or far away
  (right) from unity:
  The (gray) dots in the log-log plots show
  the single values of $\mathcal{S}(k)$ for allowed values of $\|k\|$
  (for all ten samples), which range over at least seven orders of
  magnitude.
  Moreover, the (red) marks with error bars depict averages over bins of
  wavenumbers.
  The dashed lines indicate a power law behavior $\|k\|^2$
  with proportionality constants $\mathcal{O}(10^6)$ (left) and
  $\mathcal{O}(10^{-2})$ (right).
  The vertical lines (right) indicate the positions of reciprocal
lattice vectors for the square lattice.}
  \label{fig:scattering}
\end{figure}

With mean values of the scattering intensity ranging over up to ten
orders of magnitude, Fig.~\ref{fig:scattering} strongly indicates
hyperuniformity of $\Psi^{\tau}$ for the matching of a Poisson point
process $\Psi$ to a stationarized lattice on the torus.
For $\alpha=1+10^{-2}$, the process $\Psi^{\tau}$ is at most wavenumbers
indistinguishable from a Poisson process.
The hyperuniformity can only be detected due to the huge system size of
a million points, where wavenumbers $k<10^{-3}$ are accessible.
For $\alpha=1+10^{2}$, the system is obviously hyperuniform.
The scattering consists of a diffuse part and peaks at the positions of the
reciprocal lattice vectors of the square lattice.
The scattering intensity is consistent with $\mathcal{S}(k)\sim \|k\|^2$
for $\|k\|\to 0$, which corresponds to a class~I hyperuniform system~\cite{Torquato2018}
in agreement with Remark~\ref{asymptotics}.

Finally, we directly estimate the \textit{number variance}
$\BV\Psi^\tau(B_R)$ of the number of matched points in a spherical
observation window $B_R$ with radius $R$.
Figure~\ref{fig:numvar} shows the number variance for a matching between
a stationarized lattice and a Poisson point process with an intensity
$\alpha=1+10^{-2}$ on the flat torus.
To detect hyperuniformity, we again have to analyze large system sizes.
So, we simulate ten samples in 2D each containing a million lattice
points (i.e., the linear system size is $L = 1000$).
We then match in each sample the lattice with the Poisson points.
To estimate the number variance, a million spherical observation windows
are randomly placed into each sample (considered as a flat torus).
For each throw, we compute the numbers of matched Poisson points within
various distances $R$ from the window center.
For a each radius $R$, the sample variance of this number is our
estimator of the number variance $\BV\Psi^\tau(B_R)$.
Its statistical error, estimated by the standard deviation of the
sample variance, is smaller than the point size in Fig.~\ref{fig:numvar}.

\begin{figure}[t]
  \centering
  $\vcenter{\hbox{\includegraphics[width=0.22\textwidth]{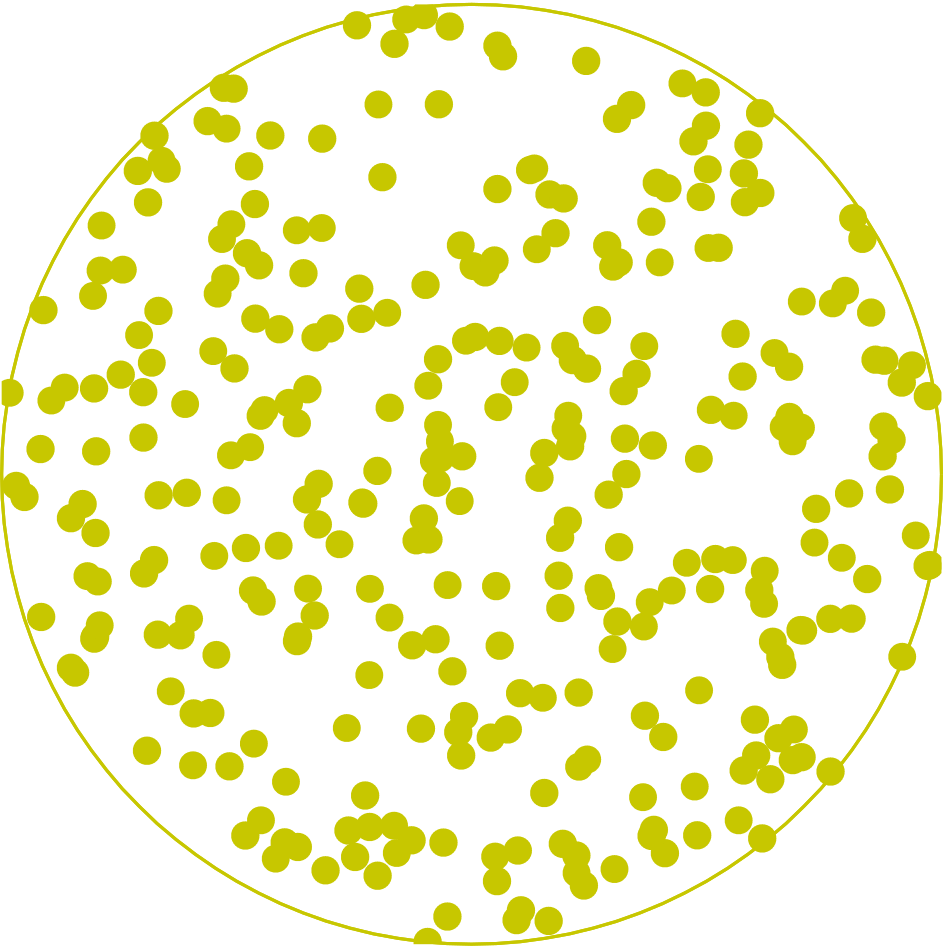}}}$
  \hfill
  $\vcenter{\hbox{\includegraphics[width=0.54\textwidth]{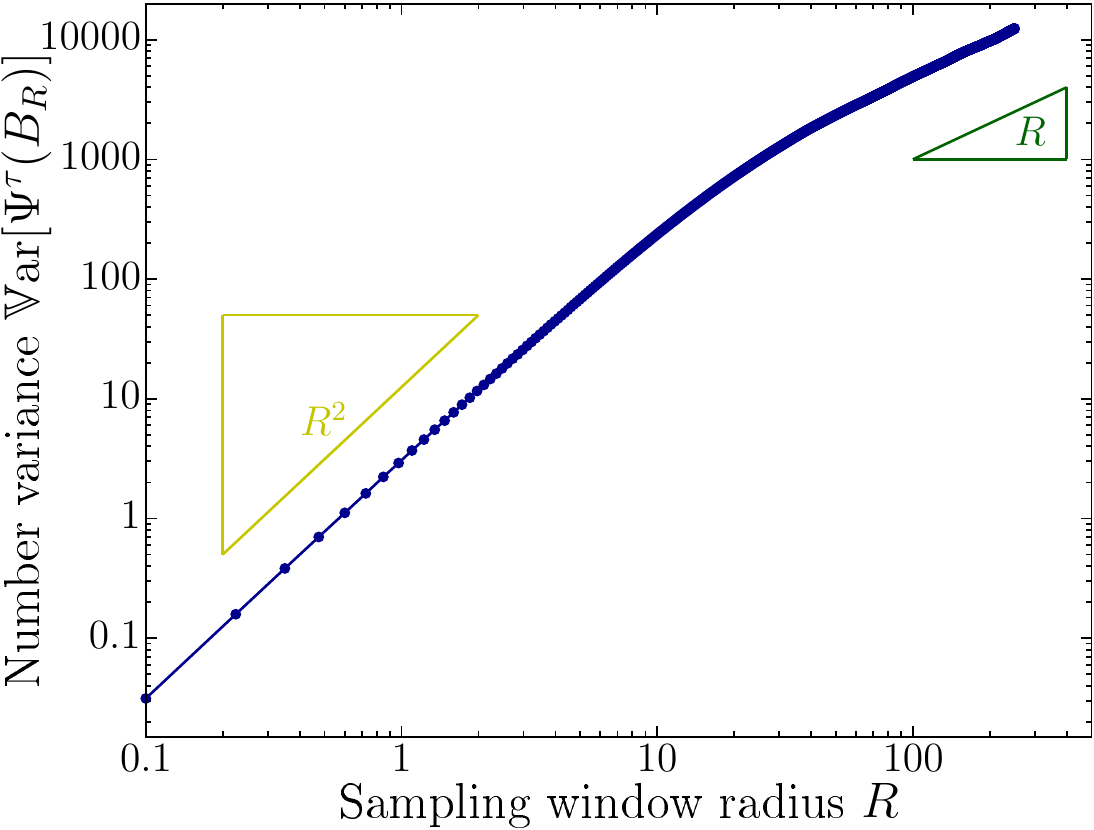}}}$
  \hfill
  $\vcenter{\hbox{\includegraphics[width=0.22\textwidth]{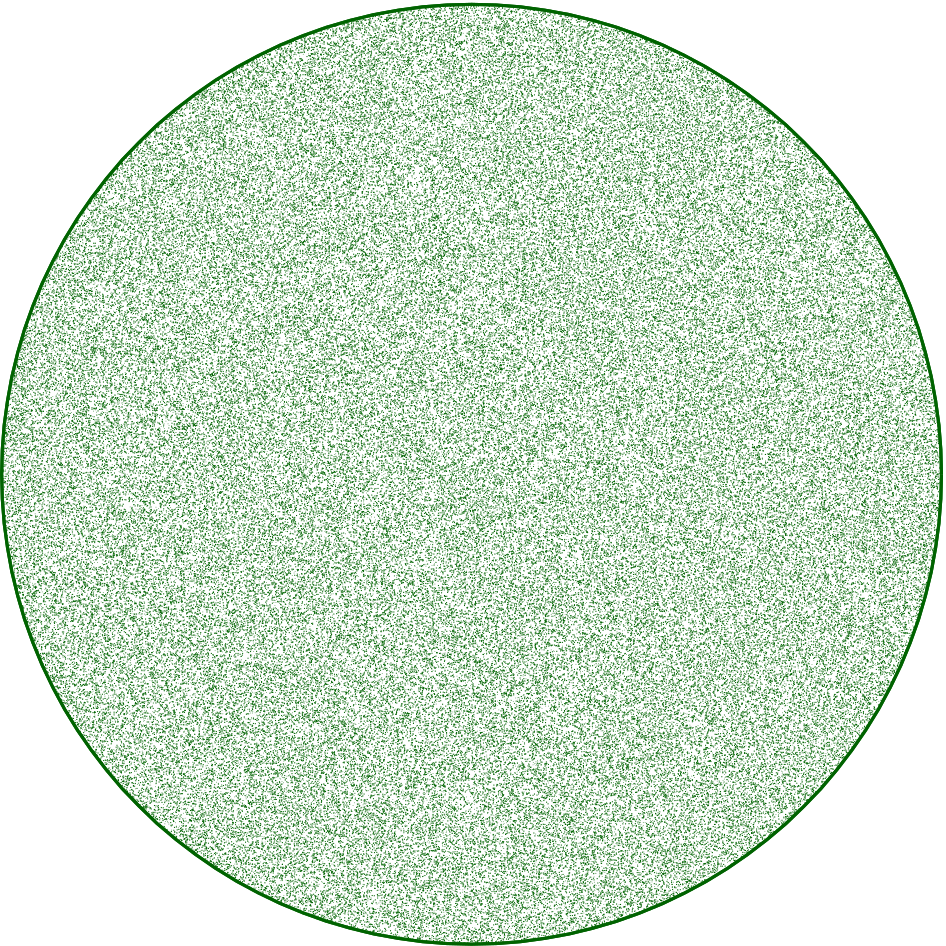}}}$
  \caption{Number variance of matched points in 2D stable matchings between a
    stationarized lattice and a Poisson point process with
    intensity $\alpha=1+10^{-2}$ on the flat torus.
    The number variance is estimated for spherical observation windows
    $B_R$ with varying radii $R$.
    Samples of observation windows are shown on the left and
    right hand side.
    They correspond to small $(R = 10)$ or large $(R = 250)$ radii.
    To highlight different scaling regimes, we use a double
    logarithmic plot.
    Up to about $R = 10$, the scaling of the number variance is
    approximately proportional to $R^2$.
    So, in this regime, the system appears to be non-hyperuniform.
    The hyperuniform, linear scaling regime can only be observed
    for radii $R > 100$.}
  \label{fig:numvar}
\end{figure}

\section{Conclusions and Outlook}

We have studied partial stable
matchings between the integer lattice and a stationary point process
of higher intensity. In one dimension, we have considered partial
one-sided stable matchings between more general stationary point
processes. Although our notion of stability is with respect to the
Euclidean distance, our techniques should work for stability
with respect to more general distances and also for more general
lattices.
Though matchings between point processes have been
studied before, the main focus has been on existence of a complete
matching between two point processes of equal intensity
and on deriving (under suitable assumptions) tail bounds on the matching distance.
Our partial matchings produce a new class of point processes that
exhibit interesting phenomena such as hyperuniformity and number
rigidity. These concepts originate in statistical physics and material
science whereas the notion of stable matching arose in combinatorial
optimization. 
Moreover, the matching can be interpreted as a spatial queueing system.
Even in 1D, it might be interesting to further study the matching from
the point of view of queueing.

Our new class of point processes provides insights into the interplay of global
and local structure in hyperuniform point processes. For any finite observation
window, we can define a point process that is within this window virtually
indistinguishable from a Poisson (or determinantal) point process.
Nevertheless, the model is rigorously hyperuniform,
where the suppression of density fluctuations occurs at arbitrarily large
length scales.
Due to the available efficient algorithms for the nearest neighbor
search, our stable matching offers a straightforward
and efficient simulation of huge correlated, hyperuniform point patterns
with a tunable degree of local order,
see Figs.~\ref{fig:samples} and \ref{fig:scattering}
and the supplementary video.

Our results can be further pursued and possibly extended in several
directions.  First of all, we believe that hyperuniformity and
rigidity remain true if $\Phi$ is a stationarized lattice
and $W$ the unit ball
(which is confirmed by our simulations). In fact it
is tempting to conjecture that $\Psi^\tau$ is hyperuniform, whenever
$\Phi$ is a hyperuniform point process. Even beyond that
$\Psi^\tau$ might inherit any asymptotic number variance from
$\Phi$.
An intriguing example is if $\Phi$ is anti-hyperuniform or hyperfluctuating, that is, 
if $\lim_{r\to\infty}{\BV\Psi^\tau(rW)}/{\lambda_d(rW)}=\infty$ like for a Poisson hyperplane
intersection process~\cite{HSS2006}.
As evidence, we have shown
the above in the special case of one dimension for one-sided matchings in
Section~\ref{secone-sided}. It is of interest to study whether these
point processes exhibit further rigidity (see~\cite{Ghosh2015} for
more about notions of rigidity).
Simply put our simulations suggest that the matched point process
``inherits'' local properties from $\Psi$ and global properties from
$\Phi$ if the intensity of $\Psi$ is larger than that of $\Phi$.

Further, our proofs of hyperuniformity and number rigidity relied
crucially upon exponential tail bounds for the typical matching
distance. In light of this, one can ask what do exponential tail
bounds for the typical matching distance between two i.i.d.~copies of the
same point process imply about the point process.

\section{Appendix: Some point process results}
\label{s:app}

We shall briefly introduce here some basic point process notions and
state some general results of use for us. For $\varphi \in \bN$ and $k \in
\N$, we define the {\em $k$th factorial measure} of $\varphi$ as
\[ \varphi^{(k)} := \{(x_1,\ldots,x_k) : x_i \in \varphi, x_i \neq x_j, \forall i \neq j \}. \]
As with $\varphi$, we often view $\varphi^{(k)}$ as a measure on $(\R^d)^k$ as follows :
\[ \varphi^{(k)}(.) := \sideset{}{^{\ne}}\sum_{x_1,\ldots,x_k \in \varphi} \delta_{(x_1,\ldots,x_k)}(.),\]
where $\sum^{\neq}$ denotes that the summation is taken over pairwise
distinct entries and an empty sum is defined to be $0$. For a point
process $\Psi$ and $k \in \N$, we define the {\em $k$th factorial moment
measure} as $\alpha^{(k)}(.) = \BE(\Psi^{(k)}(.))$. If the measure
$\alpha^{(k)}$ has a density with respect to the Lebesgue measure, i.e.,
for all $x_1,\ldots,x_k \in \R^d$, $\alpha^{(k)}(\md x_1 \ldots \md x_k)
= \rho_k(x_1,\ldots,x_k)\md x_1 \ldots \md x_k$, then $\rho_k$ is said
to be the {\em $k$-point correlation function} of $\Psi$. 

Given a point process $\Psi$, we can define the following mixing coefficient
\begin{align*}
\alpha_{p,q}(s) &:= \sup \left\{ |\COV{f(\Psi \cap A),g(\Psi \cap B)}| : f, g \in \{0,1\},  \right.\\
          &\hspace*{4.5cm} \left. \lambda_d(A) \leq p, \lambda_d(B) \leq q, d(A,B) \geq s \right\},
\end{align*}
where $d(A,B) := \inf_{x \in A, y \in B} \|x-y\|$.

Let $K : \R^d \times \R^d \to \mathbb{C}$ be a measurable function. We
call such a $K$ {\em kernel.} We assume that the kernel $K$ is
Hermitian, non-negative definite, locally square-integrable and the
associated integral operator is locally trace-class with eigenvalues
in $[0,1]$. A point process $\Psi$ is said to be {\em a determinantal
  point process with kernel $K$} if for all $k \geq 1$ and
$x_1,\ldots,x_k \in \R^d$, we have that
\[ \rho_k(x_1,\ldots,x_k) = \det\left( (K(x_i,x_j))_{1 \leq i,j \leq k} \right) .\]
The above conditions on the kernel $K$ guarantee the existence and
uniqueness of the determinantal point process with kernel $K$ (see
Theorem 4.5.5 in \cite{Hough09}.) Further if $\Psi$ is stationary,
$K(x,x)$ is a constant and is the intensity of the process $\Psi$. As
a simple consequence of Hadamard's inequality, we derive the following
useful inequality :
\begin{equation}
\label{e:Had}
\rho_k(x_1,\ldots,x_n) \leq K(0,0)^n.
\end{equation}

Though the $k$-point correlation functions of the stationary
Poisson point process satisfy the condition of being determinantal
with the kernel $K(x,y) = \1\{|x-y| = 0\}$, the associated integral
operator is degenerate. Therefore, the Poisson point
process is, strictly speaking, excluded
from our framework for determinantal point processes.
However, the stationary Poisson point process has all
properties that we use of determinantal point processes (such as
\eqref{e:Had}, Theorems \ref{t:dpp_conc} and \ref{t:dpp_mixing}).
Hence, we include it as an example of
determinantal point processes.

We now state an important concentration inequality for determinantal
point processes as well as a bound for the $\alpha_{p,q}$ mixing
coefficient.  The next two results can be found as Theorem 3.6 in
\cite{Pemantle14} and Corollary 4.2 in \cite{Poinas17} respectively.
\begin{theorem}
\label{t:dpp_conc}
Let $\Psi$ be a stationary determinantal point process (in particular Poisson) with kernel $K$ and intensity
$\alpha = K(0,0)$ and $f : \bN \to \R$ be a $c$-Lipschitz functional
(i.e., $|f(\varphi + \delta_x) - f(\varphi)| \leq c$ for all
$x \in \R^d, \varphi \in \bN$). Then for any $a > 0$ and bounded set $A$,
we have that
\begin{align*}
\BP (|f(\Psi \cap A) - \BE(f(\Psi \cap A))| \geq a) 
\leq 5 \exp\Big(-\frac{a^2}{4c(a + 2\alpha\lambda_d(A)c)}\Big).
\end{align*}
\end{theorem}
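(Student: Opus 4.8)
The plan is to prove the inequality in the discrete world of \emph{strong Rayleigh} measures and then transfer it to the continuum. First I would reduce to a finite ground set: partition the bounded set $A$ into $n$ small cells $A_1,\dots,A_n$ and replace $\Psi\cap A$ by the vector recording which cells are occupied. As the mesh is refined the probability that any cell contains two or more points is negligible, so the occupation vector converges to the point configuration, and the occupation law can be compared with a discrete determinantal measure. Since the target bound depends only on $c$, $a$ and $\mu:=\BE[\Psi(A)]=\alpha\lambda_d(A)=\int_A K(x,x)\,\md x$ — and \emph{not} on $n$ — it suffices to prove the inequality for a $c$-Lipschitz functional of the occupation vector and then pass to the limit.

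The crucial structural input is that determinantal measures on a finite ground set are strong Rayleigh, by the theorem of Borcea, Br\"and\'en and Liggett. I would use two consequences. First, the total number of points $N=\Psi(A)$ is distributed as a sum of independent Bernoulli variables (the spectral/eigenvalue representation of a determinantal process, cf.\ \cite{Hough09}) with $\BE N=\mu$; this already yields a Bernstein bound of exactly the stated shape for the special functional $f=N$, and it underpins the variance bookkeeping below. Second, strong Rayleigh measures enjoy the \emph{stochastic covering property}: revealing the occupation variables one cell at a time, the conditional law in which a given cell is occupied dominates the conditional law in which it is empty by a coupling that inserts exactly one point.

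With these in hand the concentration follows from a martingale argument of the type used by Pemantle and Peres \cite{Pemantle14}. I would form the Doob martingale $M_k:=\BE[f\mid \mathcal F_k]$ obtained by revealing the cells successively, write $f(\Psi\cap A)-\BE f=\sum_k (M_k-M_{k-1})$, and bound each increment. The stochastic covering property forces $|M_k-M_{k-1}|\le c$, but the key point — and the main obstacle — is to show that the increment is effectively nonzero only when cell $k$ is occupied, so that the predictable quadratic variation is controlled by $c^2\sum_k \BP(\text{cell }k\text{ occupied}\mid \mathcal F_{k-1})$, whose expectation is $c^2\BE N=c^2\mu$ rather than $c^2 n$. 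This is precisely where the negative-dependence structure does the real work: a naive bounded-differences estimate would give the useless variance proxy $c^2 n$. Inserting the quadratic-variation bound of order $c^2\mu$ together with the increment bound $|M_k-M_{k-1}|\le c$ into a Freedman--Bernstein inequality for martingales yields the one-sided estimate $\BP(f\ge \BE f+a)\le C\exp\!\big(-a^2/(4c(a+2c\mu))\big)$; applying the same argument to $-f$ and adding the two tails produces the two-sided bound, with the overall constant absorbed into $5$. Letting the mesh tend to zero then gives the stated continuum inequality for $\Psi\cap A$, the delicate point throughout being the $\mu$- rather than $n$-scaling of the sub-Gaussian term, for which the strong Rayleigh property is indispensable.
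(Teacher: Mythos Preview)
The paper does not give its own proof of this theorem: it merely quotes the result as Theorem~3.6 of Pemantle and Peres \cite{Pemantle14} (see the sentence preceding the statement in the appendix). Your sketch is essentially a faithful reconstruction of the argument in that reference --- the Borcea--Br\"and\'en--Liggett theorem that discrete determinantal measures are strong Rayleigh, the stochastic covering property, the Doob martingale whose predictable quadratic variation is controlled by $c^2\mu$ rather than $c^2 n$, and a Freedman-type inequality --- supplemented by a discretisation step to pass from the finite-ground-set setting of \cite{Pemantle14} to a continuous determinantal process restricted to $A$. So there is nothing in the paper to compare against; you are reproducing the cited proof, not proposing an alternative to one the authors give.

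One small caveat on the discretisation: the occupation vector of $\Psi\cap A$ on a fine partition is not literally a discrete determinantal point process (cells can contain more than one point), so the approximation needs a little care --- either via the eigenvalue/Bernoulli representation of $\Psi\cap A$ directly, or by controlling the error from collisions and checking that the strong Rayleigh structure persists in the limit. The core ideas you describe are correct and are exactly those of \cite{Pemantle14}.
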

\begin{theorem}
\label{t:dpp_mixing}
Let $\Psi$ be a stationary determinantal point process  (in particular Poisson) with kernel $K$ and define $\omega(s) := \sup_{x,y \in \R^d, \|x-y\| \geq s} |K(x,y)|$, $s >0$. Then we have that for all $s > 0$, 
$$\alpha_{p,q}(s) \leq pq \omega(s)^2$$
\end{theorem}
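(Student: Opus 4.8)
The plan is to bound the covariance of two $\{0,1\}$-valued functionals by a total-variation distance and then exploit the determinantal structure to make that distance quadratic in the off-diagonal block of the kernel. Fix Borel sets $A,B$ with $\lambda_d(A)\le p$, $\lambda_d(B)\le q$ and $d(A,B)\ge s$, and put $D:=A\cup B$. For $f,g\in\{0,1\}$ we may write $f(\Psi\cap A)=\I_{E_A}$ and $g(\Psi\cap B)=\I_{E_B}$ for events $E_A\in\sigma(\Psi\cap A)$, $E_B\in\sigma(\Psi\cap B)$, so that $|\COV{f(\Psi\cap A),g(\Psi\cap B)}|=|\BP(E_A\cap E_B)-\BP(E_A)\BP(E_B)|$. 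Ranging over such rectangles, this is at most the total variation distance $d_{\mathrm{TV}}(\mu,\mu_A\otimes\mu_B)$, where $\mu$ is the joint law of $(\Psi\cap A,\Psi\cap B)$ and $\mu_A,\mu_B$ are its marginals. Both laws live on $D$ and are determinantal: the restriction $\Psi\cap D$ is determinantal with kernel $K_D:=\I_D K\I_D$, written in block form with diagonal blocks $K_A:=\I_A K\I_A$, $K_B:=\I_B K\I_B$ and off-diagonal block $K_{AB}:=\I_A K\I_B$; since the restriction of a determinantal process to $A$ (resp.\ $B$) is again determinantal with kernel $K_A$ (resp.\ $K_B$), the product $\mu_A\otimes\mu_B$ is the determinantal process whose kernel $\tilde K$ equals $K_A$ on $L^2(A)$ and $K_B$ on $L^2(B)$, i.e.\ $K_D$ with its off-diagonal block set to zero.

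Thus it suffices to bound $d_{\mathrm{TV}}(\mathrm{DPP}(\tilde K),\mathrm{DPP}(K_D))$. I would interpolate by $K_t:=(1-t)\tilde K+tK_D$, $t\in[0,1]$, whose off-diagonal block is $tK_{AB}$. Being a convex combination of two Hermitian operators whose spectra lie in $[0,1]$, each $K_t$ again has spectrum in $[0,1]$ and hence is a legitimate determinantal kernel; write $\BP_t$ for its law on $D$ and $\phi(t):=\BP_t(E_A\cap E_B)$, so the covariance equals $\phi(1)-\phi(0)$. The key structural point is that $\BP_{-t}=\BP_t$: conjugating $K_t$ by the gauge that acts as the identity on $L^2(A)$ and as $-1$ on $L^2(B)$ replaces the off-diagonal block by its negative, turning $K_t$ into $K_{-t}$, while leaving every correlation function $\rho^t_k(x_1,\dots,x_k)=\det\bigl(K_t(x_i,x_j)\bigr)$ unchanged (the gauge factors cancel in the determinant). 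Hence $\phi$ is an even function of $t$, so $\phi'(0)=0$ and the dependence on $K_{AB}$ is genuinely second order. I would then estimate $\phi(1)-\phi(0)$ by controlling the derivative of the determinantal law in the kernel (via the Janossy densities / Fredholm determinant of $K_t$ on $D$) and bounding the total variation of the resulting signed measure, the target being $d_{\mathrm{TV}}\le\|K_{AB}\|_{\mathrm{HS}}^2=\int_A\int_B|K(x,y)|^2\,dx\,dy$.

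The final step is immediate: since $d(A,B)\ge s$ we have $|K(x,y)|\le\omega(s)$ for all $x\in A$, $y\in B$, so $\int_A\int_B|K(x,y)|^2\,dx\,dy\le\lambda_d(A)\lambda_d(B)\,\omega(s)^2\le pq\,\omega(s)^2$. Taking the supremum over admissible $A,B$ and over $f,g\in\{0,1\}$ yields $\alpha_{p,q}(s)\le pq\,\omega(s)^2$.

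The main obstacle is the middle step: upgrading the qualitative ``second order in $K_{AB}$'' statement to the sharp quantitative bound $d_{\mathrm{TV}}\le\|K_{AB}\|_{\mathrm{HS}}^2$ \emph{uniformly over all events} $E_A,E_B$ and with constant one. This means controlling not just $\phi'$ but the full increment (the quadratic and higher even-order terms) at the level of total variation of measures rather than for one smooth functional; a careful expansion of the Janossy densities of $K_t$, using the contraction bounds $\|K_A\|,\|K_B\|\le1$ to control the Schur-complement and inverse factors, should deliver it. As an alternative route I would use that determinantal processes are negatively associated and invoke a Newman-type covariance inequality after discretizing $A$ and $B$ into cells, reducing the mixing coefficient directly to $\int_A\int_B|\rho_2(x,y)-\rho_1(x)\rho_1(y)|\,dx\,dy=\int_A\int_B|K(x,y)|^2\,dx\,dy$; there the difficulty migrates to passing from monotone cell-functionals to arbitrary indicator events and to the continuum limit.
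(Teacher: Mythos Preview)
The paper does not prove this theorem; it is quoted from Corollary~4.2 of Poinas, Delyon and Lavancier~\cite{Poinas17}. Their argument follows what you call your \emph{alternative route}: it uses that determinantal point processes are negatively associated and establishes, for any (positively or negatively) associated point process, a bound of the form
\[
\alpha_{p,q}(s)\le \int_A\int_B\bigl|\rho_2(x,y)-\rho_1(x)\rho_1(y)\bigr|\,dx\,dy,
\]
the right-hand side being the total variation of the reduced covariance measure on $A\times B$. For a DPP the integrand is exactly $|K(x,y)|^2$, and the theorem follows immediately. The passage from monotone cell-functionals to arbitrary indicator events---which you correctly flag as the residual difficulty---is handled there by discretizing $A$ and $B$ and invoking a covariance inequality for associated random vectors of Newman type; that is precisely where the work lies.

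Your primary approach via the interpolation $K_t$ and the gauge-symmetry observation is elegant, and the evenness of $t\mapsto\BP_t$ is correct. But, as you yourself acknowledge, the step from ``second order in $K_{AB}$'' to the sharp inequality $d_{\mathrm{TV}}\le\|K_{AB}\|_{\mathrm{HS}}^2$ with constant one is not carried out, and it is not obvious that it can be: known total-variation bounds between DPPs with nearby kernels typically carry extra operator-norm or trace factors, and controlling the full increment at the level of signed measures (rather than for a single smooth functional) is genuinely delicate. So the proposal as written has a real gap in its main line, while the route actually taken in the cited reference is your second one.
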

It can be shown that if for all $p,q >0$, $\alpha_{p,q}(s) \to 0$ as $s
\to \infty$, then $\Psi$ is mixing. For example, see Section 3 in
\cite{Ivanoff82}.

\section*{Acknowledgments}

This work was in part supported by the German Research Foundation
(Deutsche For\-schungsgemeinschaft) through Grants No. HU1874/3-2 and
No. LA965/6-2 awarded as part of the DFG-Forschergruppe FOR 1548
``Geometry and Physics of Spatial Random Systems.'' DY's work was
supported by INSPIRE Faculty Award from DST and CPDA from the Indian
Statistical Institute.  This project was initiated during DY's visit
to the Institute of Stochastics, Karlsruhe Institute of Technology and
DY wishes to thank the institute for hosting him. DY would like to
thank Rapha\"el Lachi\`eze-Rey for his comments on an earlier draft and
Subhroshekhar Ghosh and Manjunath Krishnapur for discussions on higher-dimensional examples of rigid point processes. The authors are thankful to the two anonymous referees for their comments leading to an improved presentation.

\end{document}